	\algnewcommand\And{\textbf{and}}
\definecolor{dred}{RGB}{220,0,0}
\definecolor{halfgray}{gray}{0.55}
\definecolor{webgreen}{rgb}{0,.5,0}
\definecolor{webbrown}{rgb}{.6,0,0}
	\pgfplotsset{width=7cm,compat=1.8}
    \newcolumntype{C}[1]{>{\raggedleft\arraybackslash}p{#1}}
\newcommand{\set}[1]{\left\{#1\right\}}
\newcommand{\normm}[1]{{\left\vert\kern-0.25ex\left\vert\kern-0.25ex\left\vert #1 \right\vert\kern-0.25ex\right\vert\kern-0.25ex\right\vert}}
\DeclareMathOperator{\tr}{tr}
\newcommand{\R}{\mathbb R}
\newcommand{\N}{\mathbb N}
\newcommand{\bs}{\backslash}
\newcommand{\hi}{s}
\newcommand{\hj}{t}
\newcommand{\rt}{R_{\mathrm{tot}}}
\newcommand{\ff}{\bm{f}}
\newcommand{\pp}{\bm{p}}
\newcommand{\bmo}{\bm{1}}
\newcommand{\bt}{\bm{\theta}}
\renewcommand{\a}{\bm{\alpha}}
\newcommand{\ee}{\bm{e}}
\renewcommand{\l}{\lambda}
\renewcommand{\b}{\beta}
\newcommand{\im}{b_{\calI}}
\newcommand{\calB}{\mathcal{B}}
\newcommand{\calBB}{\calB\calB}
\newcommand{\calE}{\mathcal{E}}
\newcommand{\calG}{\mathcal{G}}
\newcommand{\calI}{\mathcal{I}}
\newcommand{\calL}{\mathcal{L}}
\newcommand{\calP}{\mathcal{P}}
\newcommand{\calV}{\mathcal{V}}
\newtheorem{thm}{Theorem}[section]
\newtheorem{cor}[thm]{Corollary}
\newtheorem{lem}[thm]{Lemma}
\newtheorem{prop}[thm]{Proposition}
\title{A Spectral Representation of Power Systems with Applications to Adaptive Grid Partitioning and Cascading Failure Localization} 
\date{\today}
\author{Alessandro Zocca\footnote{Department of Mathematics, Vrije Universiteit Amsterdam, 1081HV, Netherlands, \texttt{a.zocca@vu.nl}} \and Chen Liang, Linqi Guo, Steven H.~Low, Adam Wierman \footnote{Department of Computing and Mathematical Sciences, California Institute of Technology, Pasadena, CA 91125, \texttt{\{cliang2, lguo, slow, adamw\}@caltech.edu}}}
\begin{document}
\maketitle

\begin{abstract}
Transmission line failures in power systems propagate and cascade non-locally. This well-known yet counter-intuitive feature makes it even more challenging to optimally and reliably operate these complex networks. In this work we present a comprehensive framework based on spectral graph theory that fully and rigorously captures how multiple simultaneous line failures propagate, distinguishing between non-cut and cut set outages.
Using this spectral representation of power systems, we identify the crucial graph sub-structure that ensures line failure localization -- the network bridge-block decomposition.
Leveraging this theory, we propose an adaptive network topology reconfiguration paradigm that uses a two-stage algorithm where the first stage aims to identify optimal clusters using the notion of network modularity and the second stage refines the clusters by means of optimal line switching actions. Our proposed methodology is illustrated using extensive numerical examples on standard IEEE networks and we discussed several extensions and variants of the proposed algorithm.
\end{abstract}


\section{Introduction}
\label{sec:intro}
Electrical power grids are among the most complex and critical networks in modern-day society, reliably bringing power from generators to end users, cities and industries, often very far away geographically from each other. 
Traditionally, power systems have been designed as one-directional networks, where electric energy travels over high-voltage transmission lines from big conventional and controllable generators to distribution networks and eventually to consumers. This paradigm is changing rapidly in recent years due to many concurrent trends. Firstly, more distributed energy resources are coming online and consumers are slowly becoming prosumers, shifting the conventional one-directional power flow paradigm to bi-directional. There are massive investments in renewable energy sources, whose power generation is geographically correlated, more volatile, and less controllable than traditional sources of generation. Moreover, we are electrifying our transportation systems, which is extremely important for reducing our greenhouse gas emissions. 
However, electric vehicles have high energy demands and the current grid design and operations will not be able to sustain a high penetration of EVs, especially in view of correlated charging patterns. 
Managing increasing and correlated loads while having more volatile and less controllable generation may seem to be an impossible task, especially when aiming to keep the same operational and reliability standards. 

Given these trends, power grids are becoming increasingly stressed and have less margin for maneuver, making failures more likely and harder to contain, increasing the chance of blackouts. The growing complexity and increasing stochasticity of these networks challenge the classical reliability analyses and strategies. For instance, preventing line failures and mitigating their non-local propagation will become ever harder, having to deal with a broader range of more variable power injection configurations and more bidirectional flows on transmission lines.

To respond to these increasing challenges,  power systems infrastructure is becoming more adaptive and responsive. Power systems have been traditionally looked at as a static network, but in fact they are not, since many of the transmission lines can be remotely taken online/offline. Rapid control mechanisms and corrective switching actions are increasingly being used to improve network reliability~\cite{shao2005corrective} and reduce operational costs~\cite{fisher2008optimal,hedman2009optimal}, especially since it is possible to quickly and efficiently estimate the current status of the grid using new monitoring devices and data processing strategies~\cite{huang2012state}.

However, new transmission infrastructure can be expensive and its placement will not necessarily increase reliability. Therefore, we need to make optimal use of the existing system and its adaptability. Optimally and dynamically switching transmission lines is an inexpensive and promising option because it uses existing hardware to achieve increased grid robustness. The ability of the operator to adaptively change the topology of the grid depending on the current network configuration offers a great potential. However, even with perfect information about the system, finding the best switching actions in real time is not an easy task. The flows on the lines are fully determined by power flow physics and network topology (cf.~Kirchhoff's and Ohm's laws), which means that every switching action causes a global power flow redistribution. In view of the combinatorial nature of this problem and of the large scale of the network, it is clear that a brute force approach will fail. 



\subsection{Contributions of the paper}

This paper aims to tackle the aforementioned challenges in power systems operations by taking full advantage of transmission line flexibility.  To accomplish this requires new mathematical tools to first understand and then optimally and reliably operate these increasingly complex systems. In particular, a new understanding of the role of the network topology, especially when it comes to non-local failure propagation, is needed. 

To this end, we introduce new spectral graph theory tools to analyze and optimize power systems. In particular, we propose a new spectral representation of power systems that effectively captures complex interactions within power networks, for instance inter-dependencies between infrastructure components and failure propagation events.
The key observation underlying this representation is the fact that, under the DC power flow approximation, the linear relation between power injections and line flows can be expressed using the weighted graph Laplacian matrix, where the line susceptances are used as edge weights. The eigenvalues and eigenvectors of this matrix thus contain rich information about the topology of the network as well as on its physical and electrical properties. Starting from this observation, Section~\ref{sec:spectral} illustrates how the graph Laplacian and its pseudo-inverse can be used to characterize islands of the power network and the power balance conditions of each island. 

When using conventional representations, the impact of line failures is discontinuous and notoriously difficult to characterize or even approximate, but our spectral representation provides a simple and exact characterization. More specifically, in Section~\ref{sec:distfactor} we show how the impact of network topology changes on line flows, quantified using the so-called \textit{distribution factors}, can be exactly described using spectral quantities. The results, most of which already appeared in~\cite{TPS1,TPS2} cover not only the case of single line outage, but also the more general case of the simultaneous outage of multiple lines. The analysis distinguishes two fundamentally different scenarios depending on whether the network remains connected or not, namely \textit{cut set outages} and \textit{non-cut set outages}.

Leveraging the spectral representation and the distribution factors analysis, in Section~\ref{sec:localization} we identify which graph sub-structures affect power flow redistribution and fully characterize the line failure propagation. More specifically, our analysis reveals that the network line failure propagation patterns can be fully understood by focusing on network block and bridge-block decompositions, which are related, respectively, to the cut vertices and cut edges of the power network. 

Our failure localization results suggest the robustness of the network is often improved by reducing the redundancy of the network, which indirectly makes the aforementioned decomposition finer. Using this insight, we then suggest in Section~\ref{sec:optimization} a novel design principle for power networks and propose algorithms that leverage their pre-existing flexibility to increase their robustness against failure propagation. More specifically, we propose a new procedure that, by temporarily switching off transmission lines, can be used to optimally modify the network topology in order to refine the bridge-block decomposition. We accomplish this through solving a novel two-stage optimization problem that adaptively modifies the power network structure, using the current power injection configuration as input. 

In Subsection~\ref{sub:mod} we introduce the first step of this procedure, which identifies a target number of clusters in the power network by solving an optimization problem whose objective function is a weighted version of the classical \textit{network modularity} problem. Being an NP-hard problem to solve exactly, we show how spectral clustering methods can be used to quickly and effectively obtain good approximated solutions, i.e., nearly-optimal clusters. 
The second step of our procedure, presented in Subsection~\ref{sub:cut}, identifies line switching actions that transform the identified clusters into bridge-blocks, refining the bridge-block decomposition of the network. The optimal subset of line switches are selected using a combinatorial optimization problem which aims to minimize the congestion of transmission lines of the network while achieving the target network bridge-block decomposition. 

This two-step procedure, which we refer to as the one-shot algorithm, is summarized in Subsection~\ref{sub:pra}, where we also introduce a faster recursive variant that iteratively refines the bridge-block decomposition by bi-partitioning the biggest bridge-block. The numerical implementation of both algorithms is discussed in Section \ref{sec:numerics}, where we test their performance on a large family of IEEE test networks of heterogeneous size. As revealed by our extensive analysis, almost all these power networks have a trivial bridge-block decomposition consisting of a single giant bridge-block, suggesting that their robustness could be greatly improved by our procedure.

Most of the present paper focuses on reliability issues and, in particular, line failure propagation, but we conclude in Section~\ref{sec:conclusion} by highlighting some other promising applications of the network block and bridge-block decompositions. Specifically, we demonstrate how a finer-grained block decomposition of a power network can be leveraged to (a) design a real-time failure localization and mitigation scheme that provably prevents and localize successive failures; (b) accelerate the standard security constrained OPF by decomposing and decoupling the problem into smaller versions in each block; (c) enable more efficient distributed solvers for AC OPF by transforming a globally coupled constraint into their counterparts for each block; and (d) tractably quantify the market price manipulation power of aggregators.

\subsection{Related literature}
\label{sub:literature}
The deep interplay between the topology of a given power system and power flow physics is for some aspects unique in network theory and a large body of literature has been devoted to the challenge of finding effective representations and approximations for these complex networked systems. These are instrumental not only to understand and analyze the behavior of these networks, but also to develop fast and effective algorithms and optimization strategies. In this subsection we briefly review the existing related literature on this topic.

Our work uses tools from spectral graph theory, an established and vast field in which many good books are available, see e.g.~\cite{brouwer2011spectra,VanMieghem2011} and reference therein. Particularly relevant for our analysis is~\cite{RanjanZhangBoley2014}, which focuses on the efficient computation of the pseudo-inverse of the graph Laplacian and explores its intimate connection with effective resistances. 
Graph spectra-based methods have been extensively used in the context of power systems, in particular in the study of phase angles frequency dynamics~\cite{Dorfler2013,Dorfler2014,Dorfler2018,Pirani2017,Koeth2019,Koeth2019b}, but also for power system restoration~\cite{QuirosTortos2013} and to analyze of the geometry of power flows~\cite{Retire2019,caputo2019spectral}. A spectral characterization for the network bridges, which play a crucial role in our analysis, was already given in~\cite{Cetinay2018,Soltan2017b}.

Understanding the underlying structure of given graph using spectral graph methods (e.g., Cheeger's inequalities) is a classical problem that received great attention in both discrete math and computer science literature. A canonical problem is the minimum $k$-cut problem, which aims to find the best partition in $k$ clusters of a given graph~\cite{StoerWagner1997}. The same problem has then been rediscovered in other domains, e.g., computer vision~\cite{ShiMalik2000}, with more emphasis on how to quickly find approximated solutions for NP-hard $k$-way partitioning problems. In this context, several clustering techniques based on spectral methods have been proposed~\cite{ShiMalik2000,NgJordanWeiss2002}, whose properties have later been studied analytically in~\cite{vonLuxburg2007}. At the same time, there was an increasing interest in the physics literature to study large networks arising in various domain (among which the world wide web and epidemiology) and unveil their underlying community structure, see~\cite{Fortunato2016} for a review. It is in this context that the concept of network modularity~\cite{Newman2004b,Newman2004} that we use in this paper has been introduced. Besides network modularity, many other techniques developed for complex networks have been applied to power systems, in particular to capture their main topological features and assess their robustness~\cite{PaganiAiello2013,Cuadra2015}.

The approach we propose in this paper crucially exploits the fact that the network topology of power systems can be changed by means of line switching actions. The classical \textit{Optimal Transmission Switching} (OTS) problem leverages the same existing flexibility, but with different goals. This optimization problem was first introduced in~\cite{koglin1980overload,Glavitsch1985,mazi1986corrective} as a corrective strategy to alleviate line overloading due to contingencies. Afterwards, transmission switching has been explored in the literature as a control method for various problems such as voltage security~\cite{shao2006,khanabadi2012optimal}, line overloads~\cite{granelli2006,shao2006}, loss and/or cost reduction~\cite{bacher1988,Fliscounakis2007,schnyder1990,Han2016,fisher2008optimal,hedman2010coopt}, clearing contingencies~\cite{khanabadi2012optimal,korad2013robust}, improving system security~\cite{schnyder1988}, or a combination of these objectives \cite{bacher1986network,rolim1999,shao2005corrective}. The interested readers can also read~\cite{Hedman2011} for a comprehensive review of the OTS literature.

Power grids are naturally divided in control regions~\cite{Panciatici2018}, raising the issue of how to optimally cluster and operate these networks. For this reason, clustering recently received a lot of attention in the power systems literature.  A very diverse set of methodologies have been considered, ranging from spectral clustering~\cite{Bialek2014} to various heuristics based on the modularity score~\cite{Guerrero2017,Lin2017}. A substantial effort has been devoted to expand and augment classical clustering methods in order to account for specific features of power systems, for instance using the notion of electrical distance~\cite{Blumsack2009,Cotilla2013} or conductances~\cite{LuWen2013,LuWen2015}. 
In the context of cascading failure analysis, clustering and community detection methods have also been used on the abstract interaction graphs~\cite{Nakarmi2018,Nakarmi2019} rather than on the physical network topologies.

Many ad-hoc clustering algorithms have been developed in the power systems literature particularly in the context of \textit{intentional controlled islanding} (ICI), an extreme security mechanism against cascading failures in which the network is disconnected into several self-sustained ``islands'' to prevent further contingencies. 
The goal of the standard ICI problem is to find the optimal partition of the network into islands while including additional constraints to ensure generator coherency and minimum power imbalance. This inevitably makes the clustering problem even harder and nontrivial trade-offs arise, which is the reason why several heuristics and approximations methods have been developed for ICI in the recent literature, see~\cite{HassaniAhangar2020,Liu2020} for an overview.


Intentional controlled islanding is a rather extreme response to large-scale cascading failures. In~\cite{Bialek2021} the author propose as an alternative emergency measure beside precisely on the core properties of the bridge-block decomposition. Several other mitigation strategies less drastic than ICI could be adopted, but they often require more detailed cascade models. However, modeling cascading failures mathematically in power systems is rather complex due to the underlying power flow physics. The book~\cite{Bienstock2015} gives a comprehensive overview of the various models that have been introduced in the literature to describe cascading failures in power systems as well as the various optimization approaches that have been devised to improve the robustness of these systems. Most cascading failure models usually consider line or generator failures as trigger events, but correlated stochastic fluctuations of the power injections have also been considered in~\cite{Nesti2018}.

Contingency analysis is a very commonly used tool for reliable operations of a power system that assesses the impact of either generator and transmission line outages~\cite{Wood2013}. Such an impact can be assessed by solving the AC power flow equations that describe the network after each contingency.  Due to the large number of contingencies that must be assessed in order to satisfy $N-k$ security for $k \geq 1$, it is a common practice to first use DC power flow models to quickly screen contingencies and select a much smaller subset that result in voltage or line limit violations for more detailed analysis using AC power flow models. Such a contingency screening uses the \textit{power transfer distribution factors} (PTDFs) and \textit{line outage distribution factors} (LODFs) for a DC power flow model, see~\cite[Chapter 7]{Wood2013} for more details. The main advantage of using these factors is that the impact of generator and transmission outages on the post contingency networks can be analyzed using the common pre-contingency topology across contingency scenarios.
LODFs for multi-line outages were first considered in~\cite{EnnsQuadaSackett1982} to study the impact of network changes on line currents using the network Laplacian matrix, obtained from the admittance matrix after the linearization of AC power flow equations. However, the refined proof approach using the matrix inversion lemma was introduced later in~\cite{AlsacStottTinney1983,StottAlsacAlvarado1985}. Note that the paper~\cite{StottAlsacAlvarado1985} allows for more general outages (e.g., generator outages) and proposes a methodology to quickly rank contingencies in security analysis.
Probably unaware of the results in~\cite{AlsacStottTinney1983, StottAlsacAlvarado1985}, some of the formulas for generalized LODF (GLODF) have been re-discovered in \cite{ GulerGross2007,GulerGrossLiu2007,GuoShahidehpour2009} using different approaches based on the Power Transfer Distribution Factors (PTDF). More specifically, line outages are emulated through changes in injections on the pre-contingency network by judiciously choosing injections at the endpoints of each outaged/disconnected line using PTDF. 
Distribution factors for linear flows have been studied extensively in the recent literature~\cite{Horsch2018,Kaiser2020c,Ronellenfitsch2017,Ronellenfitsch2017b,Strake2019}.
Some recent work~\cite{TPS1,TPS2,Guo2018,Guo2019,Kaiser2020a,Kaiser2020b} rigorously proves that the presence of specific graph sub-structures guarantees that some of these distribution factors are zero, hence showing the potential for localizing outages and avoiding a global propagation by means of optimal network reconfiguration. The localization effects of these graph substructures can be effectively visualized by means of an ad-hoc version of the so-called \textit{influence} (or \textit{interaction}) \textit{graph}~\cite{Hines2013,Hines2017,Nakarmi2018,Nakarmi2019}. In~\cite{Guo2019,Liang2021,TPS3} the authors also explore how carefully designed network substructures synergize well with other control mechanisms that provide congestion management in real time. 
LODFs are also studied more recently as a tool to quantify network robustness and flow rerouting \cite{Strake2019}. While PTDF and LODF determine the sensitivity of power flow solutions to parameter changes, one can also study the sensitivity of optimal power flow solutions to parameter changes; see, e.g., \cite{Gribik1990, Hauswirth2018}.

\section{A spectral representation of power systems}
\label{sec:spectral}
A power transmission network can be described as a graph $G=(V,E)$, where $V=\set{1,\ldots, n}$ is the set of vertices (modeling \textit{buses}) and $E\subset V \times V$ is the set of edges (modeling \textit{transmission lines}). Denote by $n=|V|$ the number of vertices and by $m=|E|$ the number of edges of the network $G$. 
In this section we describe power network model that we consider throughout the paper and highlight an intimate connection between spectral graph theory and power flow physics. We first review in Subsection~\ref{sub:basic} some key notions from graph theory and some classical network decompositions that are crucial for our analysis. Then, in Subsection~\ref{sub:dcmodel}, we introduce the power flow model that we will focus on in this paper in spectral terms and present some immediate results that follow from this representation.


\subsection{Network decompositions and substructures}
\label{sub:basic}
Due to power flow physics, the topology of a power transmission network plays a central role in the way power flows on its lines. In fact, the power flows are intrinsically determined by the network physical structure via Kirchhoff's laws once that the power injections are fixed. This also means that the power flow redistribution after a contingency and possible cascading outages are intimately related to the network structure. It comes as no surprise that to study the robustness of a power network makes uses of advance graph theory notions and algorithms. We now briefly review some useful network decompositions and substructures that will play a crucial role in our analysis in the next sections. 

In this subsection we look at a transmission power network in purely topological terms as an undirected and unweighted graph $G$, thus temporarily ignoring the physical and electrical properties of its transmission lines. The $k$-\textit{partition} of a network $G=(V,E)$ is a finite collection $\calP=\set{\calV_1,\calV_2,\cdots, \calV_k}$ of nonempty and disjoint subsets of $V$ such that $\bigcup_{i=1}^k \calV_i=V$. 
Denote by $\Pi_k(G)$ the collection of $k$-partitions of $G$ and by $\Pi(G) := \bigcup_{i=1}^{|V|} \Pi_i(G)$ the collection of all its partitions.
Given a partition $\calP$, each edge of the network $G$ is either a \textit{cross edge} if its two endpoints belong to different clusters of $\calP$ and \textit{internal edge} otherwise. We denote the collections of cross edges and internal edges as $E_c(\calP)$ and $E_i(\calP)$, respectively.
There is a natural partial order $\succeq$ on the collection $\Pi(G)$ of partitions, defined as follows: given two partitions $\calP^1=\set{\calV_1^1,\calV_2^1,\cdots,\calV_{k_1}^1}$ and $\calP^2=\set{\calV_1^2,\calV_2^2,\cdots,\calV_{k_2}^2}$, we say that $\calP^1$ is \emph{finer} than $\calP^2$, denoted as $\calP^1\succeq \calP^2$, if each subset in $\calP^1$ is contained in some subset in $\calP^2$. More precisely, $\calP^1\succeq \calP^2$ if for every $i=1,2,\ldots, k_1$, there exists some $j(i)\in \set{1,2,\ldots, k_2}$ such that $\calV_i^1\subseteq \calV_{j(i)}^2$. 

Each partition $\calP$ induces a \textit{quotient graph} $G/\calP$, which is the undirected graph whose vertices are the clusters in $\calP$ and two distinct clusters $\calV_i, \calV_j \in \calP$, are adjacent if and only if there exists at least one cross edge in the original graph $G$ whose endpoints belong one to $\calV_i$ and one to $\calV_j$. 
A slight variation of the quotient graph is the \textit{reduced graph} $G_\calP$, which is defined as the (multi)graph whose vertices are still the clusters in $\calP$, but in which we draw an edge between two distinct clusters for every cross edge between them. The reduced graph $G_\calP$ can thus have multiple (parallel) edges, and coincides with the quotient graph only when it is simple.

It is well known that any equivalence relation on the set of vertices of the network $G$ induces a partition. We now briefly review three canonical decompositions of an undirected graph that have been introduced in the graph-theory literature (see, e.g.,~\cite{Harary1971,Westbrook1992}) using specific equivalence relations. 

First of all, consider the partition $\calP_\mathrm{path}=\{\calI_1,\dots, \calI_k\}$ in which two vertices are in the same subset if and only if there is a path connecting them. In this case $E_c(\calP_\mathrm{path})=\emptyset$ by definition, the quotient and reduce graphs trivially coincides and and have no edges. 
The subgraphs $G_1,\dots,G_k$ induced by $\calP_\mathrm{path}$ are the \textit{connected components} of $G$, also referred to as \textit{islands} in the power systems literature. A graph is \textit{connected} if it consists of a single connected component, i.e.~$|\calP_\mathrm{path}|=1$. In general, a power network $G$ may not be connected at all times, either due to operational choice or as a result of severe contingencies).

The notion of graph connectedness allows to introduce two more notions that will be crucial for our network analysis. A subset $\calE \subset E$ is a \textit{cut set} of $G$ if the deletion of all the edges in $\calE$ disconnects $G$. If the cut set consists of a single edge, we refer to it as \textit{bridge} or \textit{cut edge}. A \textit{cut vertex} of $G$ is any vertex whose removal (together with its incident edges) increases the number of its connected components.

A second network decomposition can be obtained by looking at the network circuits. Recall that a circuit is a path from a vertex to itself with no repeated edges. We consider the partition $\calP_\mathrm{circuit}(G)$ such that two vertices are in the same class if and only if there is a circuit in $G$ containing both of them.
The subgraphs induced by the subset of nodes in each of the equivalence classes of $\calP_\mathrm{circuit}(G)$ are precisely the connected components of the graph obtained from $G$ by deleting all the bridges and, for this reason, they are called \textit{bridge-connected components} or \textit{bridge-blocks}. We will refer to the partition $\calP_\mathrm{circuit}(G)$ as \textit{bridge-block decomposition} of $G$ and denote it by $\mathcal{B}\mathcal{B}(G)$.

The next lemma summarizes the intuitive fact that the bridge-block decomposition of a graph becomes finer if some of its edges are removed. This result is the cornerstone of the strategy to improve the network reliability that is presented in Section~\ref{sec:optimization}.
\begin{lem}[Removing edges makes bridge-block decomposition finer]\label{lem:finer}
For any graph $G=(V,E)$ and subset of edges $\calE \subset E$, the bridge-block decomposition of the graph $G^\calE:=(V,E\setminus \calE)$ obtained from $G$ by removing the edges in $\calE$ is always finer than that of $G$, i.e.,
$$\calBB(G^\calE) \succeq  \calBB(G).$$
\end{lem}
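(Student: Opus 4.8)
The plan is to work directly from the circuit-based definition of the bridge-block decomposition, $\calBB(G)=\calP_\mathrm{circuit}(G)$, rather than from the equivalent description via bridge deletion. Recall that $\calP_\mathrm{circuit}(G)$ is induced by the equivalence relation $\sim_G$ on $V$ in which $u\sim_G v$ exactly when $u=v$ or some circuit of $G$ passes through both $u$ and $v$. Expressed in these terms, the claim $\calBB(G^\calE)\succeq\calBB(G)$ is equivalent to the statement that the relation $\sim_{G^\calE}$ refines the relation $\sim_G$, i.e.\ that $u\sim_{G^\calE}v$ implies $u\sim_G v$ for all $u,v\in V$. Indeed, if every $\sim_{G^\calE}$-class sits inside a single $\sim_G$-class, then by the definition of the partial order $\succeq$ each block of $\calBB(G^\calE)$ is contained in a block of $\calBB(G)$, which is precisely the desired inequality.

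To prove that $\sim_{G^\calE}$ refines $\sim_G$, I would argue as follows. Fix $u,v\in V$ with $u\sim_{G^\calE}v$. If $u=v$ there is nothing to show, so assume $u\neq v$; then there is a circuit $C$ in $G^\calE$ passing through both $u$ and $v$. Since $G^\calE=(V,E\setminus\calE)$ and $E\setminus\calE\sse E$, every edge used by $C$ is also an edge of $G$, so $C$ is a circuit of $G$ as well. Hence $u$ and $v$ lie on a common circuit of $G$, i.e.\ $u\sim_G v$. This is the entire substance of the argument: the circuit relation is monotone under passing to a spanning subgraph, because deleting edges can only destroy circuits, never create them.

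The only points needing care are bookkeeping rather than substance. First, one should confirm that $\sim_G$ is genuinely an equivalence relation so that $\calP_\mathrm{circuit}(G)$ is well defined; reflexivity and symmetry are immediate from the convention above, and transitivity is the standard fact that two circuits sharing a vertex can be spliced into a single circuit through any chosen pair of their vertices. Second, the reflexive clause $u=v$ correctly accounts for isolated vertices and bridge endpoints that lie on no circuit and hence form singleton blocks; such a singleton is trivially contained in whatever block of $\calBB(G)$ contains that vertex. I expect no genuine obstacle here: the result could alternatively be obtained by induction on $\abs{\calE}$, reducing to the deletion of a single edge, but the circuit argument dispatches arbitrary $\calE$ in one step and is cleaner than tracking how the set of bridges evolves under edge removal (an edge of $G^\calE$ can cease to lie on any circuit and thereby become a new bridge), which is why I would avoid the bridge-deletion formulation.
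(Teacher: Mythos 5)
Your proof is correct and follows essentially the same route as the paper's: both work from the circuit-based definition $\calBB(G)=\calP_\mathrm{circuit}(G)$ and use the fact that removing edges can only destroy circuits, so the circuit equivalence classes of $G^\calE$ are contained in those of $G$. Your write-up is in fact somewhat more explicit than the paper's (which simply asserts that each class "either remains unchanged or gets partitioned"), but there is no substantive difference in approach.
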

\begin{proof}
Consider the partition $\calBB(G)=\calP_{\mathrm{circuit}}(G)$ and recall that two vertices belong to the same equivalence class (bridge-block) if and only if there exists a circuit that contains both of of them. By removing a subset of edges $\calE$ from $G$, each of the equivalence classes either remains unchanged or gets partitioned in smaller equivalence classes. This readily implies that for every new bridge-block $B \in \calBB(G^\calE)$ obtained after the edge removal there exists a bridge-block $B'$ of the original network, i.e., $B' \in \calBB(G)$, such that $B \subseteq B'$. 
\end{proof}

A third network decomposition can be obtained by considering the network cycles, which are the circuits in which the only repeated vertices are the first and last ones. More specifically, we consider the equivalence relation for edges of being contained in a cycle and denote by $\{E_1,E_2,\dots,E_k\}$ to be the resulting partition of $E$. Let $R_i=(V_i,E_i)$ be the corresponding subgraph of $G$, consisting of the edges in $E_i$ and the vertices $V_i$ that are the endpoints of these edges. We refer to each subgraph $R_i$ either as \textit{block} and to $\mathcal{B}(G)=\{V_1,\dots,V_k\}$ as the \textit{block decomposition} of $G$. Every block is a maximal biconnected (or 2-connected) subgraph, since the removal of any of its vertices does not disconnect it. A vertex could appear in more than one block and, since its removal disconnects $G$, this is an equivalent characterization of cut vertices. The block decomposition $\mathcal{B}(G)$ thus is not a partition of the vertex set $V$, but nonetheless for any pair of vertices there is either no block or exactly one block containing them both. We distinguish two types of blocks, \textit{trivial blocks} and \textit{nontrivial blocks}, depending on whether they consist of a single edge or not. A trivial block consists of an edge that is not contained in any cycle; such an edge must then be a bridge since its removal disconnects the graph, cf.~\cite[Chapter 3]{Harary1971}. Two nontrivial blocks either share a cut vertex or are connected by a bridge, i.e., a trivial block. 

We remark that a statement similar to Lemma~\ref{lem:finer} holds also for the edge partition $\{E_1,E_2,\dots,E_k\}$ corresponding to the block decomposition of a graph $G$ and for the block decomposition $\calB(G)$ itself. However, for the strategy to improve network reliability that we present in Section~\ref{sec:optimization}, it turns out that $\calB(G)$ is less convenient to work with than the bridge-block decomposition $\calBB(G)$ since the block decomposition $\calB(G)$ is not a proper vertex partition (recall that the cut vertices always appear in two or more blocks).


Both the block and bridge-block decompositions of a graph $G$ are unique. We can informally say that the block decomposition is finer than the bridge-block decomposition, since each bridge corresponds to a trivial block, while every bridge-block consists of several non-trivial blocks connected by cut vertices. Such a nested structure of the block and bridge-block decompositions is illustrated for a small graph in Fig.~\ref{fig:networkdec}. Another visualization of these network decompositions for an actual power network can be found later in Section~\ref{sec:localization}, see Fig.~\ref{fig:influencegraph_pre}.

A graph naturally decomposes into a tree/forest of blocks called its \textit{block-cut tree/forest}, depending on whether the original graph is connected or not. More specifically, in such a block-cut tree/forest there is a vertex for each block and for each cut vertex, and there is an edge between any block and cut vertex that belongs to that block. Similarly, also the bridge-blocks and bridges of $G$ have a natural tree/forest structure, called \textit{bridge-block tree/forest}, depending if $G$ is connected or not. Such a tree/forest is the graph with a vertex for each bridge-block and with an edge for every bridge. We refer the reader to~\cite[Chapter 3]{Harary1971} for further details.

\begin{figure}[!h]
\centering
\subfloat[]{\includegraphics[width=.26\textwidth]{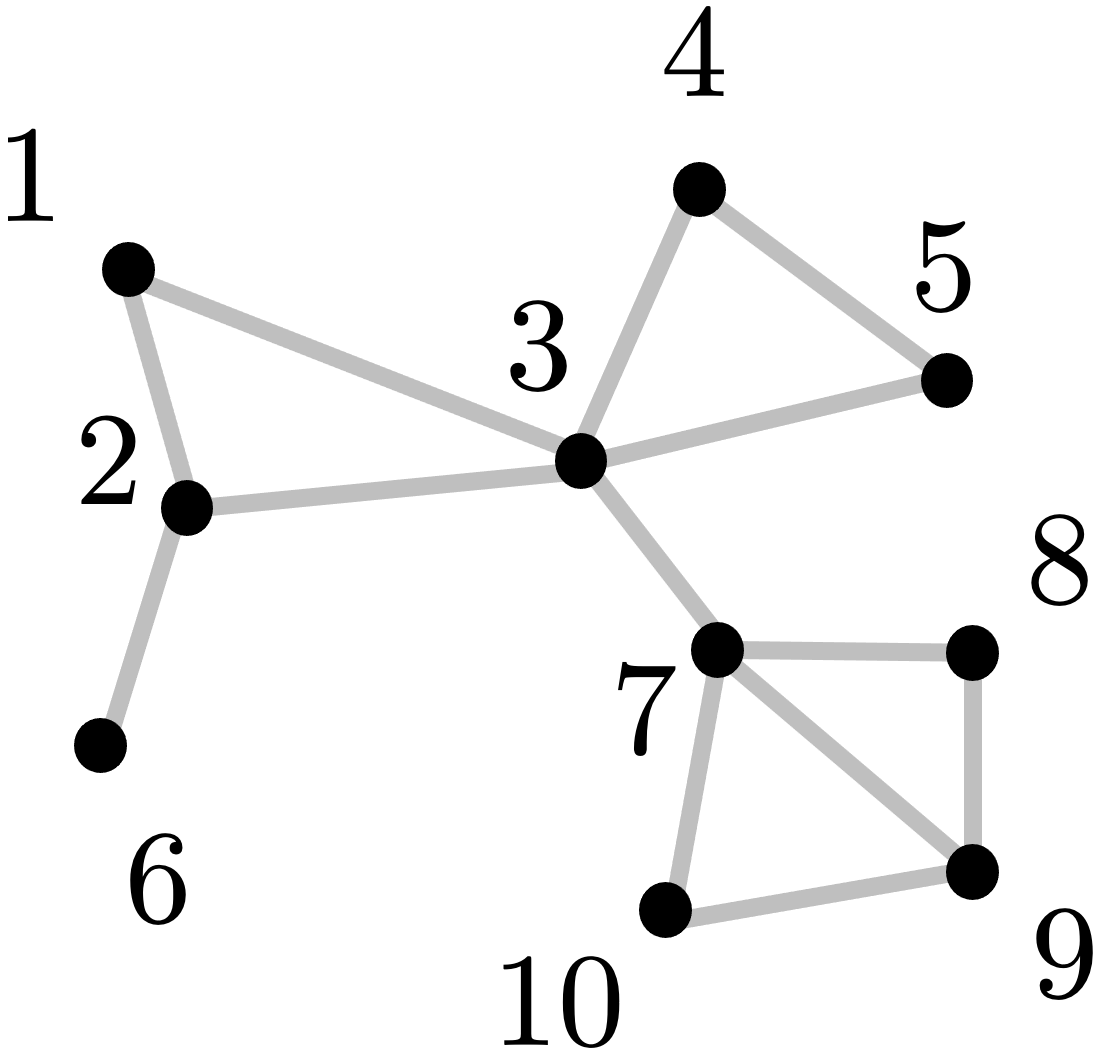}}
\hspace{1cm}
\subfloat[]{\includegraphics[width=.26\textwidth]{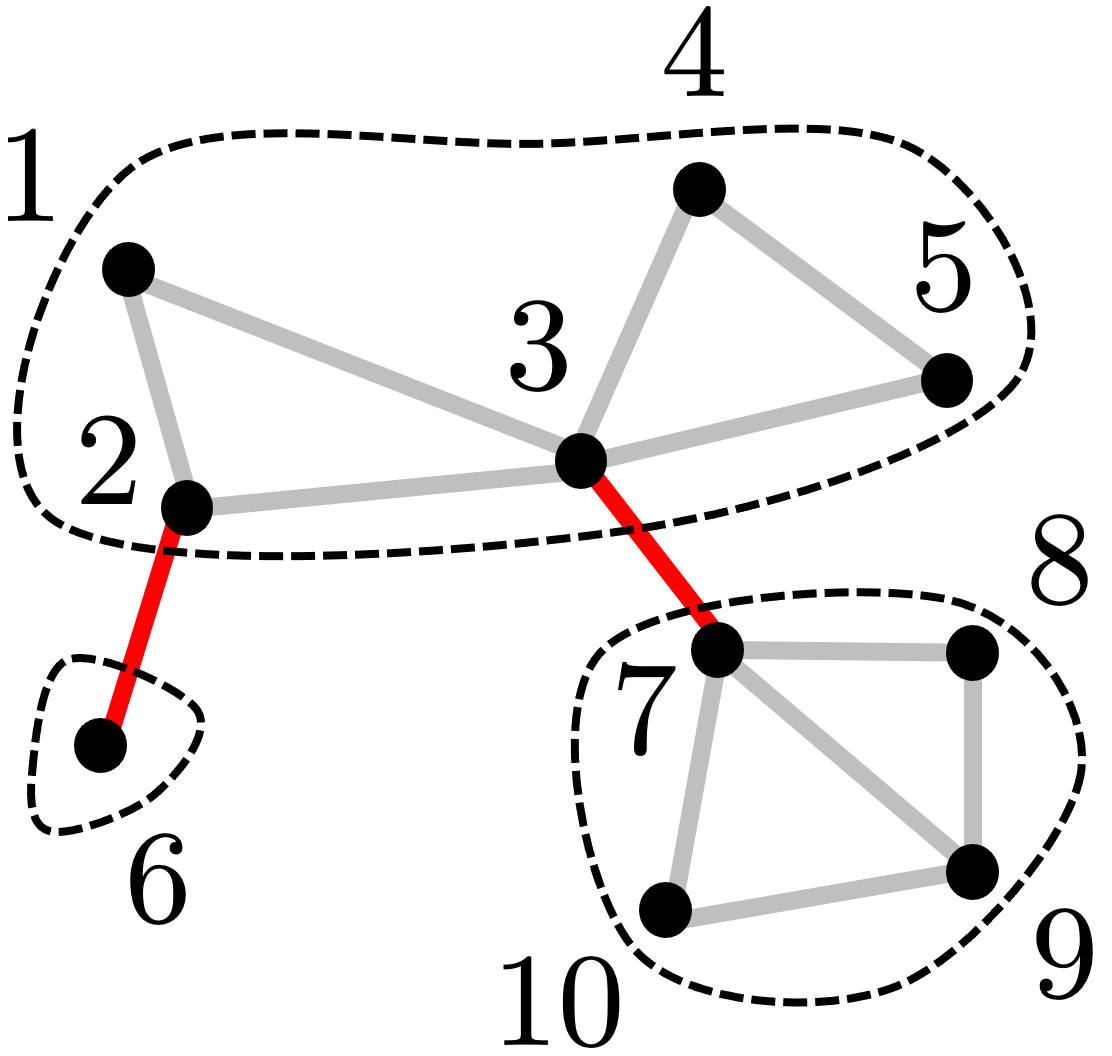}}
\hspace{1cm}
\subfloat[]{\includegraphics[width=.3\textwidth]{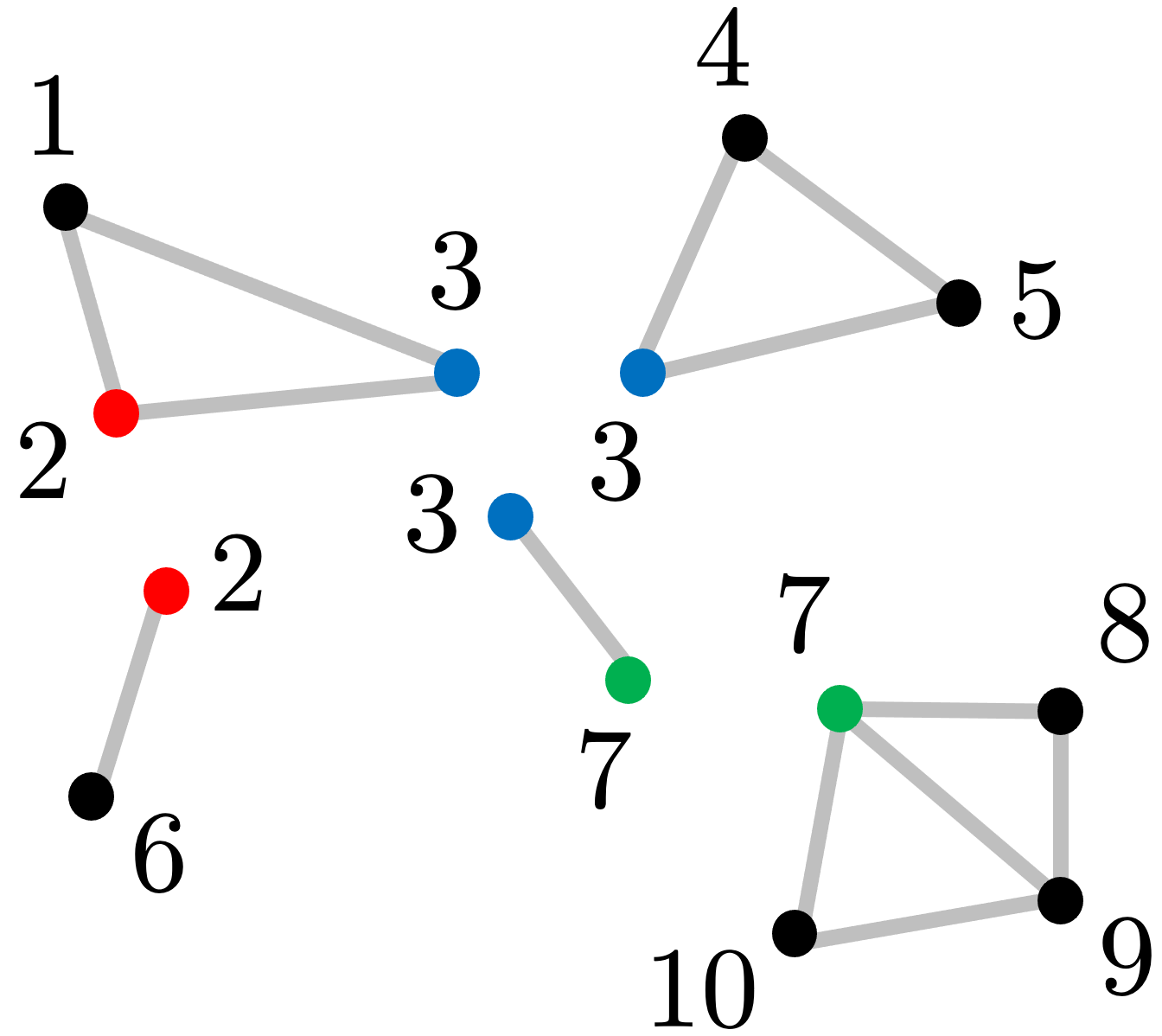}}
\caption{An undirected graph $G$ in (a) with its bridge-block decomposition in (b) and with its block decomposition in (c). Edges $(2, 6)$ and $(3, 7)$ are bridges. Vertices $2, 3, 7$ are the cut vertices of $G$ and appear in multiple blocks.}
\label{fig:networkdec}
\end{figure}
\FloatBarrier

The problems of finding the blocks and bridge-blocks of a given graph are well understood: sequential algorithms that run in time $O(n + m)$ were given in~\cite{Hopcroft1973,Tarjan1972} and logarithmic-time parallel algorithms are given in~\cite{Awerbuch1987,Tarjan1985}.
Later in Section~\ref{sub:PTDF} we also give a precise spectral characterization for bridges (cf. Corollary \ref{ch:lm; corollary:PTDF.3}).

Another complementary powerful tool to study the topology of power networks is using its spanning trees, which will play a crucial role in the study of distribution factors in Section~\ref{sec:distfactor}. A spanning tree of $G$ is a subgraph that is a tree with a minimum possible number of edges that includes all of the vertices of $G$. Let $T_\calE$ be the collection of all spanning trees of $G$ consisting only of edges in $\calE\subseteq E$ and $T = T_E$ the set of all spanning trees of $G$. We further denote by $T\!\setminus\! \calE := T_{E\!\setminus\! \calE}$ the collection of all spanning trees of $G$ consisting of edges in $E\!\setminus\! \calE$. For any pair of disjoint subsets of vertices $V_1, V_2\subset V$, we denote by $T(V_1, V_2)$ the collection of spanning forests of $G$ consisting of exactly two trees necessarily disjoint, one containing the vertices in $V_1$ and the other one containing those in $V_2$. Note that $T_\calE$ can be empty if the subset $\calE$ is too small and that $T(V_1, V_2)$ is empty by definition whenever $V_1$ and $V_2$ are not disjoint.

\subsection{Power flow model and its spectral representation}
\label{sub:dcmodel}
In this subsection, we present the classical DC power flow model and a few key spectral properties that are intertwined with the power flow dynamics.

In order to accurately model a power system, it is important to capture the physical properties of its transmission lines. When necessary, we thus look at $G=(V,E)$ as weighted graph, in which each edge $\ell=(i,j) \in E$ has a weight $b_\ell=b_{ij}=b_{ji}>0$ that models the \textit{susceptance} of the corresponding line. To stress the difference with its unweighted counterpart, we will refer to the weighted graph $G=(V,E,\bm{b})$ as \textit{power network} and to its edges as lines.

It is crucial for our analysis to capture the ``electrical properties'' of specific parts or components of the power network, in particular of its spanning trees. For this reason we introduce a nonnegative weight for any subset $\calE \subseteq E$ of lines by defining a function $\beta : 2^E\rightarrow \mathbb R_+$ as follows: 
\[
	\beta(\calE) := \prod_{\ell\in \calE}\, b_\ell, \qquad \calE \subseteq E.
\]
In other words, $\beta(\calE)$ is the product of the susceptances of all the lines in the subset $\calE$. For consistency we set $\beta(\calE) = 0$ if $\calE = \emptyset$ and $E\neq\emptyset$, but $\beta(\calE) = 1$ when the power network $G$ consists of a single vertex, i.e., $\calE = E = \emptyset$.

Let $B:=\mathrm{diag}(b_1,\dots,b_m) \in \R^{m \times m}$ be the diagonal matrix with the line susceptances (i.e., the edge weights) $b_1,\dots,b_m$ as entries, assuming a fixed order has been chosen for the edge set $E$. Aiming to describe line flows, it is necessary to introduce an (arbitrary) orientation for the edges, which is captured by the vertex-edge incidence matrix $C \in \{-1,0,+1\}^{n\times m}$ defined as
$$
C_{i \ell}=\begin{cases}
  1 & \text{if vertex }i\text{ is the source of }\ell,\\
  -1 & \text{if vertex }i\text{ is the target of }\ell,\\
  0 &\text{otherwise.}
\end{cases}
$$
The spectral representation of power systems relies on the \textit{weighted Laplacian matrix} of the power network $G$, that is the symmetric $n \times n$ matrix $L$ defined as $L:=C B C^T$ or, equivalently in terms of the susceptances, as
\begin{equation}
\label{eq:Laplaciandef}
    L_{i,j} :=
    \begin{cases}
    		\sum_{k \neq i} b_{i k} 		& \text{ if } i=j,\\
		- b_{ij}  							& \text{ if } i \neq j.
	\end{cases}
\end{equation}
Denote by $\l_1, \l_2 , \dots, \l_{n}$ its eigenvalues and by $\bm{v}_1,\bm{v}_2,\dots,\bm{v}_n$ the corresponding eigenvectors, which we take to be of unit norm and pairwise orthogonal. The next proposition summarizes some crucial properties of the weighted Laplacian matrix $L$ and its \textit{Moore-Penrose pseudo-inverse} $L^\dag$. The proof can be found in the appendix and we refer the reader to~\cite{RanjanZhangBoley2014,VanMieghem2011} for further spectral properties of graphs.

\begin{prop}[Laplacian matrix $L$ and its pseudo-inverse]
\label{ch:lm; eq:Laplacian-properties}
The following properties hold for the weighted Laplacian matrix of a power network $G = (V, E, \bm{b})$:
\begin{itemize}
\item[(i)] For every $\bm{x} \in\R^n$ we have $\bm{x} ^T L \bm{x}  = \sum_{(i,j)\in E}\, b_{ij} (x_i - x_j)^2 \geq 0$.
\item[(ii)] $L$ is a symmetric positive semidefinite matrix and, hence, has a real non-negative spectrum, i.e.,~$0 \leq \l_1 \leq \l_2 \leq \dots \leq \l_{n}$.
\item[(iii)] All the rows (and columns) of $L$ sum to zero and thus the matrix $L$ is singular.
\item[(iv)] The matrix $L$ has rank $n-k$, where $k\geq 1$ is the number of connected components $\calI_1,\dots, \calI_k$ of $G$. The eigenvalue $0$ has multiplicity $k$ and $\mathrm{ker}(L)=\mathrm{ker}(L^\dag)=\mathrm{span}(\bm{1}_{\calI_1},\dots,\bm{1}_{\calI_k})$, where $\bm{1}_{\calI_j} \in \{0,1\}^n$ is the vector which is $1$ on $\calI_j$ and $0$ elsewhere. Furthermore, the pseudo-inverse of $L$ can be calculated as
	\[
	    L^\dag = \sum_{j=k+1}^n \frac{1}{\lambda_j} \bm{v}_j \bm{v}_j^T = \left( L + \sum_{j=1}^k \frac{1}{|\calI_j|} \bmo_{\calI_j} \bmo_{\calI_j}^T \right)^{-1}  - \sum_{j=1}^k \frac{1}{|\calI_j|} \bmo_{\calI_j} \bmo_{\calI_j}^T.   
	\]
In particular, in the special case in which $G$ is connected, i.e., $k=1$, $\mathrm{ker}(L)=\mathrm{ker}(L^\dag) = \mathrm{span}(\bmo)$, where $\bm{1} \in \R^n$ is the vector with all unit entries and
	\[
		L^\dag = \sum_{j=2}^{n} \frac{1}{\l_j} \, \bm{v}_j \bm{v}_j^T = \left( L + \frac{1}{n} \bmo \bmo^T \right)^{-1} \ - \ \frac{1}{n} \bmo \bmo^T.
	\]
\item[(v)] $L^\dag$ is a real, symmetric, positive semi-definite matrix with zero row (and hence column) sums and its nonzero eigenvalues are $0 < \l_n^{-1} \leq \dots \leq \l_{k+1}^{-1}$. 
\end{itemize}
\end{prop}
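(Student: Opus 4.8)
The plan is to derive everything from the factorization $L = CBC^T$ together with the fact that $B = \mathrm{diag}(b_1,\dots,b_m)$ has strictly positive entries, treating the five parts in the order stated since each builds on the previous. For part (i) I would expand $\bm{x}^T L \bm{x} = (C^T\bm{x})^T B (C^T\bm{x})$ and note that the $\ell$-th component of $C^T\bm{x}$ equals $x_i - x_j$ for the edge $\ell = (i,j)$ under the chosen orientation; since $B$ is diagonal, the form collapses to $\sum_{(i,j)\in E} b_{ij}(x_i-x_j)^2$, which is nonnegative because each $b_{ij}>0$. Part (ii) is then immediate: symmetry is visible from the definition of $L$, positive semidefiniteness is exactly (i), and the real nonnegative ordered spectrum follows from the spectral theorem for a symmetric PSD matrix. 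Part (iii) is the one-line computation $\sum_j L_{ij} = \sum_{k\neq i} b_{ik} - \sum_{j\neq i} b_{ij} = 0$, equivalently $L\bmo = 0$, exhibiting a nonzero kernel vector and hence singularity; column sums follow by symmetry.

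For part (iv) I would first pin down the kernel. Writing $L = (B^{1/2}C^T)^T(B^{1/2}C^T)$, the identity $\bm{x}^T L \bm{x} = \|B^{1/2}C^T\bm{x}\|^2$ shows $L\bm{x} = 0 \iff B^{1/2}C^T\bm{x} = 0 \iff C^T\bm{x} = 0$ (using invertibility of $B^{1/2}$). By part (i) this forces $x_i = x_j$ along every edge, i.e.\ $\bm{x}$ is constant on each connected component, so $\mathrm{ker}(L) = \mathrm{span}(\bmo_{\calI_1},\dots,\bmo_{\calI_k})$. These $k$ indicators have disjoint supports, hence are linearly independent, giving $\dim\mathrm{ker}(L) = k$ and $\mathrm{rank}(L) = n-k$; since $L$ is PSD its kernel coincides with the eigenspace of eigenvalue $0$, so $0$ has multiplicity $k$, and $\mathrm{ker}(L^\dag) = \mathrm{ker}(L)$ holds for the Moore--Penrose pseudo-inverse of any symmetric matrix.

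The main obstacle, and the only genuinely non-routine step, is the closed form for $L^\dag$. Here I would set $P := \sum_{j=1}^k \frac{1}{|\calI_j|}\bmo_{\calI_j}\bmo_{\calI_j}^T$ and observe that the normalized indicators $\bmo_{\calI_j}/\sqrt{|\calI_j|}$ form an orthonormal basis of $\mathrm{ker}(L)$, so $P$ is precisely the orthogonal projection onto $\mathrm{ker}(L)$. Choosing the zero-eigenvectors $\bm{v}_1,\dots,\bm{v}_k$ to be exactly these normalized indicators, the spectral decomposition gives $L = \sum_{j=k+1}^n \lambda_j \bm{v}_j\bm{v}_j^T$ and hence $L^\dag = \sum_{j=k+1}^n \lambda_j^{-1}\bm{v}_j\bm{v}_j^T$ by definition. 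Then $L + P = \sum_{j=1}^k \bm{v}_j\bm{v}_j^T + \sum_{j=k+1}^n \lambda_j \bm{v}_j\bm{v}_j^T$ is full-rank and symmetric with inverse $\sum_{j=1}^k \bm{v}_j\bm{v}_j^T + \sum_{j=k+1}^n \lambda_j^{-1}\bm{v}_j\bm{v}_j^T = P + L^\dag$; rearranging yields $L^\dag = (L+P)^{-1} - P$, and setting $k=1$, $P = \tfrac1n\bmo\bmo^T$ recovers the connected case.

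Finally, part (v) reads off the same eigendecomposition: $L^\dag$ is symmetric as a sum of symmetric rank-one terms, it is PSD because its nonzero eigenvalues are the reciprocals $\lambda_n^{-1}\le\cdots\le\lambda_{k+1}^{-1}$ (positive, and ordered oppositely to the $\lambda_j$), and the zero row/column sums amount to $L^\dag\bmo = 0$, which holds because $\bmo = \sum_{j=1}^k \bmo_{\calI_j} \in \mathrm{ker}(L) = \mathrm{ker}(L^\dag)$. I expect the projection identity $P + L^\dag = (L+P)^{-1}$ to be where most care is needed, while every other step is a direct consequence of the factorization $L = CBC^T$.
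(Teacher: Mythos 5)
Your proposal is correct and follows essentially the same route as the paper's proof: the quadratic-form identity, the characterization of $\mathrm{ker}(L)$ as vectors constant on connected components, the spectral decomposition of $L^\dag$, and the verification that $(L+P)^{-1}=L^\dag+P$ via the orthonormal eigenbasis. The only cosmetic difference is that you obtain part (i) through the factorization $\bm{x}^TL\bm{x}=\|B^{1/2}C^T\bm{x}\|^2$ while the paper expands $L$ as a sum of rank-one edge terms; both yield the same computation.
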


Denote by $\pp \in \R^{n}$ the vector of power injections, where the entry $p_i$ models the power generated (if $p_i>0$) or consumed (if $p_i<0$) at the bus corresponding to vertex $i$. We say that a power injection vector $\pp \in \R^n$ is \textit{balanced} if $\pp \in \mathrm{im}(L)$. Since $ \mathrm{im}(L^\dag) = \mathrm{ker}(L^\dag)^\bot=\mathrm{span}(\bm{1}_{\calI_1},\dots,\bm{1}_{\calI_k})^\bot$, a power injection vector $\pp$ is balanced if and only if the net power injection is equal to zero in every island $\calI_j$ of the network, i.e., $\sum_{i \in \calI_j} p_i = 0$.

Transmission power systems are operated using alternating current (AC), but the equations describing the underlying power flow physics are non-linear leading to many computational challenges in solving them~\cite{Wood2013}. Therefore, linearization techniques are often used to approximate the power flow equations. We now introduce the so-called \textit{DC power flow model}, which is a first-order approximation of the AC equations that is commonly used to model high-voltage transmission system~\cite{Wood2013,Stott2009,Dorfler2013b}.

Given an (oriented) edge $\ell=(i,j) \in E$, we denote interchangeably as $f_\ell$ or $f_{ij}$ the power flow on that line. We set $f_{ji}=-f_{ij}$, with the convention that a power flow is negative if the power flows in the opposite direction with respect to the edge orientation. The so-called \textit{DC power flow model} is the system of equations
\begin{subequations}\label{eq:dc_model}
\begin{align}
& \pp = C \ff, \label{eq:conservation}\\
& \ff = BC^T \bt, \label{eq:kirchhoff}
\end{align}
\end{subequations}
where $\bt \in \R^n$ is the vector of \textit{phase angles} at the network vertices and $\ff\in \R^m$ is the vector of line power flows. Equation~\eqref{eq:conservation} captures the flow-conservation constraint, while equation~\eqref{eq:kirchhoff} describes Kirchhoff's laws.
The DC model \eqref{eq:dc_model} has a unique solution $\ff$ for each balanced injection vector $\pp$. Indeed, from~\eqref{eq:dc_model} we can deduce that $\pp=CBC^T\bt = L \bt$, and, ultimately, obtain the following spectral representation for the power flows
\begin{equation}
\label{eq:flowsdc_model}
	\ff =BC^T L^\dag \pp.
\end{equation}
The power flows are thus uniquely determined by the injections and physical properties of the network. 
In view of~\eqref{eq:flowsdc_model}, it is clear that the network operator does not directly control the power routing, which can be only indirectly changed by modifying the power injections or the network topology. This peculiar feature is one of the reasons why it is challenging to study and improve the reliability of power systems and their robustness against failures.

The interactions between components in a power network depend, not only on its topology, but also on the physical properties of its components and the way they are coupled by power flow physics. Such an ``electrical structure'' can be effectively unveiled and studied using spectral methods, more specifically looking at the so-called \textit{effective resistance}, which is formally defined as follows. The effective resistance $R_{i,j}$ between a pair of vertices $i$ and $j$ of the network $G$ is the non-negative quantity
\begin{equation}
\label{eq:effr}
	R_{ij} := (\mathbf{e}_i - \mathbf{e}_j)^T L^\dag (\mathbf{e}_i - \mathbf{e}_j) = L^\dag_{ii} + L^\dag_{jj} - 2 L^\dag_{ij},
\end{equation}
where $\mathbf{e}_i$ denotes the vector with a $1$ in the $i$-th coordinate and $0$'s elsewhere. $R_{ij}$ quantifies how ``close'' vertices $i$ and $j$ are in the power network $G$ and takes small value in the presence of many paths with high susceptances between them. The effective resistance is a proper distance on $V\times V$ and for this reason it is often referred to as \textit{resistance distance}~\cite{Klein1993}. 
The \textit{total effective resistance} of a graph $G$ is defined as $\rt(G)=\frac{1}{2} \sum_{i,j=1}^n R_{ij}$ and it is intimately related to the spectrum of $G$ by the following identity proved by~\cite{Klein1993}
\[
    \rt(G) = n \cdot \tr(L^\dag) = n \cdot \sum_{j=2}^n \frac{1}{\lambda_j}.
\]
The same quantity is also known as \textit{Kirchhoff index} in the special case when all the edges of the network $G$ have unit weights, i.e., $b_\ell=1$, $\ell=1,\dots,m$. The total effective resistance is a key quantity that measures how well connected the network is and for this reason has been extensively studied and rediscovered in various contexts, such as complex network analysis~\cite{Ellens2011} and probability theory~\cite{Doyle2000}.
The notion of effective resistances have been first introduced in the context of power networks analysis by~\cite{Cotilla-Sanchez2012} and~\cite{Cotilla2013} and since then have been extensively used to study their robustness, see e.g.,~\cite{Cetinay2018,Dorfler2018,Koc2014,Wang2014,Wang2015}, as well as to devise algorithm to improve their reliability, see e.g.~\cite{Ghosh2008,Johnson2010,Zocca2017}.

\FloatBarrier

\section{Distribution factors and power flow redistribution}
\label{sec:distfactor}
In this section we introduce two families of sensitivity factors of power networks, also known as \textit{distribution factors}. The first class is that of \textit{power transfer distribution factors} (PTDF), which describe how changes in power injections impact line flows. The second type of factors are the so-called \textit{generalized line outage distribution factors} (GLODF), which capture how line removal/outages impact power flows on the surviving lines. As illustrated in the previous section, if the power injections or the network topology change, one can always recompute the new line flows by solving the power flow equations~\eqref{eq:flowsdc_model}. The distribution factors are based on the DC approximation for power flows and provide fast estimates of the new line flows without solving again the AC power flow equations. For this reason, they have been widely used for security and contingency analysis when power flow solutions are computationally expensive, see~\cite[Chapter 7]{Wood2013}. 

In this paper, we use distribution factors to analyze structural properties of power flow solutions and to design failure localization and mitigation mechanisms to reduce the risk of large-scale blackouts from cascading failures. In this section we unveil the deep connection between the distribution factors and specific substructures of the power network topology, complementing~\cite{Ronellenfitsch2017,Strake2019}. We henceforth assume the power network to be connected. The rest of the section is structured as follows. We first focus on the impact of power injection changes on line flows in Subsection~\ref{sub:PTDF}, introducing the PTDF factors and matrix and showing how they can be calculated in terms of spectral quantities. We then illustrate the effect of network topology changes on power flows. In our analysis we distinguish two very different cases. In Subsection~\ref{sub:noncut} we look at the scenario in which the set of outaged/removed lines does not disconnect the power network, obtaining the close-form expression for the GLODFs. In Subsection~\ref{sub:cut} we consider the more involved case in which the outaged lines disconnect the power network into multiple islands and show how the impact of topology change can be clearly distinguished from that of the balancing rule that needs to be invoked to rebalance the power in each of these newly created islands.

\subsection{Power transfer distribution factors (PTDF)}
\label{sub:PTDF}
The goal of this subsection is to analyze the effect of power injections change on line flows. We will use $\tilde{\cdot}$ to denote variables after such a change (possibly a contingency, but not exclusively).

Consider two nodes $s,t \in V$ (not necessarily adjacent) and a scalar $\delta_{\hi\hj}$ describing an injection change, which can be either positive or negative. Suppose the injection at node $\hi$ is increased from $p_{\hi}$ to $\tilde p_{\hi} := p_{\hi }+ \delta_{\hi\hj}$ and that at node $\hj$ is reduced from $p_{\hj}$ to $\tilde p_{\hj} := p_{\hj} - \delta_{\hi\hj}$, while all other injections remain unchanged $\tilde p_i = p_i$, $i \neq \hi, \hj$, so that
\[
	\tilde{\pp} =  \pp + \delta_{\hi \hj} (\mathbf{e}_{\hi} - \mathbf{e}_{\hj})
\]
remains balanced, i.e., $\sum_{i\in V} \tilde p_i = \sum_{i\in V} p_i = 0$. Note that this is an equivalent condition a power injection for being balanced since we assumed that $G$ is connected and thus $\mathrm{ker}(L)=\bm{1}$ (cf.~Proposition~\ref{ch:lm; eq:Laplacian-properties}).

The \textit{power transfer distribution factor} (PTDF) $D_{\ell, \hi\hj}$, also called the \textit{generation shift distribution factor}, is defined to be the resulting change $\Delta f_{\ell} := \tilde f_{\ell} - f_{\ell}$ in the line flow on line $\ell$ normalized by $\delta_{\hi\hj}$:
\[
	D_{\ell, \hi\hj} := \frac{\Delta f_{\ell}}{\delta_{\hi\hj}} =  \frac{\tilde f_{\ell} - f_\ell }{\delta_{\hi\hj}}.
\]
For convenience $D_{\ell,\hi\hj}$ is defined to be $0$ if $i=j$ or $\hi = \hj$. The next proposition shows how this factor can be computed from the pseudo-inverse $L^\dag$ of the graph Laplacian matrix $L$ or, alternatively, in terms of the spanning forests of $G$. 
\begin{prop}[PTDF $D_{\ell,\hi \hj}$]
\label{ch:lm; prop:PTDF.1}
Given a connected power network $G = (V, E, \bm{b})$, the following identities hold for every line $\ell = (i, j)\in E$ and any pair of nodes $\hi, \hj \in V$:
%
\begin{equation}
\label{eq:Dlst}
    D_{\ell, \hi\hj} \ = \ b_{\ell} \left( L^\dag_{i\hi} +  L^\dag_{j\hj} -  L^\dag_{i\hj} -  L^\dag_{j\hi} \right) \ = \ b_{\ell} \ \frac{\sum_{\calE\in T(\{i,\hi\}, \{j,\hj\})}\, \beta(\calE) \ - \ \sum_{\calE\in T(\{i,\hj\}, \{j,\hi\})}\, \beta(\calE)}{\sum_{\calE\in T}\, \beta(\calE)}.
\end{equation}
\end{prop}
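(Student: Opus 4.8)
The first equality is a direct consequence of the spectral representation of power flows. The plan is to exploit the linearity of the solution map $\pp \mapsto \ff = BC^T L^\dag \pp$ from~\eqref{eq:flowsdc_model}. Since the injection change is $\tilde\pp - \pp = \delta_{st}(\mathbf{e}_s - \mathbf{e}_t)$, linearity gives $\Delta\ff = \delta_{st}\, BC^T L^\dag(\mathbf{e}_s - \mathbf{e}_t)$. Reading off the $\ell$-th component for $\ell = (i,j)$ and noting that the $\ell$-th row of $BC^T$ equals $b_\ell(\mathbf{e}_i - \mathbf{e}_j)^T$ by the definitions of $B$ and $C$, I obtain
\[
D_{\ell,st} = \frac{\Delta f_\ell}{\delta_{st}} = b_\ell (\mathbf{e}_i - \mathbf{e}_j)^T L^\dag (\mathbf{e}_s - \mathbf{e}_t) = b_\ell\left(L^\dag_{is} + L^\dag_{jt} - L^\dag_{it} - L^\dag_{js}\right),
\]
which is the first identity. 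The degenerate cases $i=j$ or $s=t$ give $0$ on both sides, matching the stated convention.

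For the second, combinatorial identity, the plan is to interpret the scalar $q := (\mathbf{e}_i - \mathbf{e}_j)^T L^\dag (\mathbf{e}_s - \mathbf{e}_t)$ as a nodal potential difference and then invoke a weighted matrix-tree theorem. Because $\mathbf{e}_s - \mathbf{e}_t \perp \bmo = \ker L$, the vector $\bt := L^\dag(\mathbf{e}_s - \mathbf{e}_t)$ is the unique balanced solution of $L\bt = \mathbf{e}_s - \mathbf{e}_t$, i.e.\ the vector of phase angles produced by a unit injection at $s$ and withdrawal at $t$, so that $q = \theta_i - \theta_j$. Grounding the network at $t$ (which only shifts $\bt$ by a multiple of $\bmo$ and hence leaves $\theta_i - \theta_j$ unchanged), I can compute $\bt$ from the reduced Laplacian $L^{(t)}$ obtained by deleting row and column $t$. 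The key ingredient will be the classical forest formula for its inverse, $\big(L^{(t)}\big)^{-1}_{as} = \frac{1}{\tau(G)}\sum_{\calE \in T(\{a,s\},\{t\})}\beta(\calE)$ for $a \neq t$, where $\tau(G) := \sum_{\calE\in T}\beta(\calE)$ is the weighted spanning-tree count; this is the weighted all-minors matrix-tree theorem, whose diagonal case $a=s$ recovers the effective-resistance formula $R_{st} = \tau(G)^{-1}\sum_{\calE\in T(\{s\},\{t\})}\beta(\calE)$. Using $\theta_t = 0$, this yields $\tau(G)\, q = \sum_{\calE\in T(\{i,s\},\{t\})}\beta(\calE) - \sum_{\calE\in T(\{j,s\},\{t\})}\beta(\calE)$.

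The last step is a bijective splitting argument that turns the two-term, three-vertex forest sums above into the four-vertex forest sums in the statement. In any two-tree forest $\calE \in T(\{i,s\},\{t\})$ the remaining vertex $j$ lies either in the tree containing $\{i,s\}$ or in the tree containing $t$, so $\sum_{T(\{i,s\},\{t\})}\beta = \sum_{T(\{i,s,j\},\{t\})}\beta + \sum_{T(\{i,s\},\{j,t\})}\beta$, and symmetrically $\sum_{T(\{j,s\},\{t\})}\beta = \sum_{T(\{i,s,j\},\{t\})}\beta + \sum_{T(\{j,s\},\{i,t\})}\beta$. Subtracting, the common term $\sum_{T(\{i,s,j\},\{t\})}\beta$ cancels, and since $T(\{j,s\},\{i,t\}) = T(\{i,t\},\{j,s\})$ I arrive at
\[
q = \frac{\sum_{\calE\in T(\{i,s\},\{j,t\})}\beta(\calE) - \sum_{\calE\in T(\{i,t\},\{j,s\})}\beta(\calE)}{\sum_{\calE\in T}\beta(\calE)},
\]
which combined with the first equality is exactly~\eqref{eq:Dlst}. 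Degenerate cases in which $i,j,s,t$ are not all distinct are absorbed by the convention that $T(V_1,V_2)=\emptyset$ whenever $V_1\cap V_2\neq\emptyset$.

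Finally, I expect the main obstacle to be a clean justification of the reduced-Laplacian forest formula, rather than the two easy bookkeeping steps around it. Concretely, expressing $\big(L^{(t)}\big)^{-1}_{as}$ through Cramer's rule produces a cofactor of $L^{(t)}$, i.e.\ a $2\times 2$ deletion minor of $L$, and matching this minor to the \emph{unsigned} two-tree-forest sum $\sum_{T(\{a,s\},\{t\})}\beta$ requires carefully tracking the signs in the all-minors matrix-tree theorem and checking that they all align. An alternative that sidesteps the all-minors signs is to polarize $q$ into effective resistances via $2q = R_{it} + R_{js} - R_{is} - R_{jt}$ and apply the sign-free two-tree formula for each $R_{\cdot\cdot}$; however, recombining the resulting forest sums into the compact $T(\{i,s\},\{j,t\})$/$T(\{i,t\},\{j,s\})$ form still requires the same splitting identity, so I would present the grounded-potential route as the more direct one.
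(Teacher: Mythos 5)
Your proof is correct, and it is substantially more self-contained than what the paper offers: the paper does not actually prove this proposition, but instead defers to \cite[Theorem 4]{TPS1} for the spanning-forest expression and to \cite{Guo2017} for translating a reduced-Laplacian formulation into one involving $L^\dag$. Your first identity is exactly the intended argument (linearity of $\pp \mapsto BC^TL^\dag\pp$ applied to the rank-one injection change $\delta_{\hi\hj}(\mathbf{e}_{\hi}-\mathbf{e}_{\hj})$, plus the observation that row $\ell$ of $BC^T$ is $b_\ell(\mathbf{e}_i-\mathbf{e}_j)^T$), and it is airtight, including the remark that $\mathbf{e}_{\hi}-\mathbf{e}_{\hj}\perp\bmo$ so that $L^\dag$ genuinely inverts $L$ on this input. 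For the second identity, your route through the grounded reduced Laplacian $L^{(t)}$, the weighted all-minors matrix-tree formula $\bigl(L^{(t)}\bigr)^{-1}_{a\hi}=\tau(G)^{-1}\sum_{\calE\in T(\{a,\hi\},\{\hj\})}\beta(\calE)$, and the two-way splitting of each three-vertex forest sum according to the location of the fourth vertex is a clean and correct reconstruction of the external result the paper cites; the cancellation of the common $T(\{i,j,\hi\},\{\hj\})$ term is exactly what produces the antisymmetric four-vertex form in~\eqref{eq:Dlst}, and the degenerate cases are indeed absorbed by the convention $T(V_1,V_2)=\emptyset$ when $V_1\cap V_2\neq\emptyset$. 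The only load-bearing ingredient you import without proof is the all-minors matrix-tree theorem itself (whose sign bookkeeping you correctly flag as the delicate point); that is a standard classical result, so relying on it is no weaker than the paper's reliance on \cite{TPS1}, and your write-up has the advantage of making the logical dependence explicit.
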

Recall that $T(\{i,\hi\}, \{j,\hj\})$ has been defined in Section~\ref{sec:spectral} as the collection of spanning forests of $G$ consisting of exactly two disjoint trees, one containing nodes $i$ and $\hi$ and the other one containing nodes $j$ and $\hj$ (the definition of $T(\{i,\hj\}, \{j,\hi\})$) is analogous).
The proof easily follows combining~\cite[Theorem 4]{TPS1} and the properties of the pseudo-inverse $L^\dag$ derived in~\cite{Guo2017}.
Besides the precise spectral relation with pseudo-inverse of the graph Laplacian, the previous proposition shows that the PTDFs depend \textit{only on the topology and line susceptances} and are thus independent of the injections $\pp$. We remark that the insensitivity of the PTDFs to injections is, however, specific to the DC power flow approximation, which yields a linear relation between power injections and line flows.

As noted above, $D_{\ell,\hi \hj}$ is the change in power flow on line $\ell$ when a unit of power is injected at node $\hi$ and withdrawn at node $\hj$ where nodes $\hi$ and $\hj$ need not be adjacent.  When they are adjacent, i.e., $\hat \ell := (\hi, \hj)$ is a line in $E$, then the PTDFs define a $m\times m$ matrix $D := ( D_{\ell\hat \ell}, \, \ell, \hat \ell\in E)$, to which we refer as the \textit{PTDF matrix}. 
The following proposition summarizes some properties of the PTDF matrix that already appeared without proof in~\cite[Corollary 5]{TPS1}. A detailed proof is thus provided in the Appendix.
\begin{prop}[PTDF matrix]
\label{ch:lm; prop:PTDF.2}
If $G = (V, E, \bm{b})$ is a connected power network, then the following properties hold for the PTDF matrix $D= ( D_{\ell\hat \ell}, \, \ell, \hat \ell\in E)$:
\begin{itemize}
\item[(i)] $D \ = \  B C^T  L^\dag  C$.

\item[(ii)] For every line $\ell \in E$ the corresponding diagonal entry of the PTDF matrix $D$ is given by
	\[
	D_{\ell \ell} = 1 \ - \ \frac{ \sum_{\calE\in T\!\setminus\! \{\ell\}} \beta(\calE) } { \sum_{\calE\in T} \beta(\calE) }.
	\]
	Hence, in particular, $0<D_{\ell\ell} \leq 1$.
\item[(iii)] For every line $\ell = (i,j) \in E$ the corresponding diagonal entry of the PTDF matrix $D$ is given by
	\[
	D_{\ell \ell} = b_{\ell} \left( L^\dag_{ii} +  L^\dag_{jj} -  2 L^\dag_{ij} \right) = b_{\ell} R_{ij}.
	\]
\end{itemize}
\end{prop}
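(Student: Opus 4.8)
The plan is to obtain all three items as direct specializations of Proposition~\ref{ch:lm; prop:PTDF.1} to the case where the injection pair $\hat\ell=(\hi,\hj)$ is itself an edge of $G$. For part~(i), I would start from the spectral flow formula~\eqref{eq:flowsdc_model}. Injecting $+1$ at $\hi$ and $-1$ at $\hj$ corresponds to the injection change $\Delta\pp=\mathbf{e}_{\hi}-\mathbf{e}_{\hj}$, which is precisely the $\hat\ell$-th column $C_{:,\hat\ell}$ of the incidence matrix. Hence the induced flow change is $\Delta\ff=BC^TL^\dag(\mathbf{e}_{\hi}-\mathbf{e}_{\hj})=BC^TL^\dag C\,\mathbf{e}_{\hat\ell}$, whose $\ell$-th entry is by definition $D_{\ell\hat\ell}$; gathering these columns yields $D=BC^TL^\dag C$. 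Equivalently, one checks entrywise that $(BC^TL^\dag C)_{\ell\hat\ell}=b_\ell(\mathbf{e}_i-\mathbf{e}_j)^TL^\dag(\mathbf{e}_{\hi}-\mathbf{e}_{\hj})=b_\ell\big(L^\dag_{i\hi}+L^\dag_{j\hj}-L^\dag_{i\hj}-L^\dag_{j\hi}\big)$, which is exactly the first (spectral) expression in~\eqref{eq:Dlst}.

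Part~(iii) is then immediate. Setting $\hat\ell=\ell=(i,j)$, i.e.\ $\hi=i$ and $\hj=j$, in the spectral expression of~\eqref{eq:Dlst} and using the symmetry $L^\dag_{ij}=L^\dag_{ji}$ gives $D_{\ell\ell}=b_\ell\big(L^\dag_{ii}+L^\dag_{jj}-2L^\dag_{ij}\big)$, which equals $b_\ell R_{ij}$ by the definition~\eqref{eq:effr} of the effective resistance.

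The only item requiring genuine combinatorial work is~(ii), for which I would start from the spanning-forest expression in~\eqref{eq:Dlst} with $\hat\ell=\ell=(i,j)$. The first step is to observe that the subtracted sum runs over $T(\{i,\hj\},\{j,\hi\})=T(\{i,j\},\{i,j\})$, which is empty by definition since the two prescribed vertex sets are not disjoint; thus $D_{\ell\ell}=b_\ell\,\tfrac{\sum_{\calE\in T(\{i\},\{j\})}\beta(\calE)}{\sum_{\calE\in T}\beta(\calE)}$. The key step is then a weight-preserving bijection between the spanning trees of $G$ that contain $\ell$ and the two-tree spanning forests in $T(\{i\},\{j\})$: deleting $\ell$ from such a spanning tree splits it into one tree containing $i$ and one containing $j$, i.e.\ an element of $T(\{i\},\{j\})$, and conversely re-inserting $\ell$ reconnects the two trees into a spanning tree of $G$. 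Under this correspondence $\beta(\tau)=b_\ell\,\beta(\tau\setminus\{\ell\})$, so summing yields $b_\ell\sum_{\calE\in T(\{i\},\{j\})}\beta(\calE)=\sum_{\tau\in T,\,\ell\in\tau}\beta(\tau)=\sum_{\calE\in T}\beta(\calE)-\sum_{\calE\in T\!\setminus\!\{\ell\}}\beta(\calE)$. Dividing by $\sum_{\calE\in T}\beta(\calE)$ produces the claimed identity. The bound $D_{\ell\ell}\le 1$ follows since every $\beta(\calE)\ge 0$, while $D_{\ell\ell}>0$ follows either from part~(iii), as $b_\ell>0$ and $R_{ij}>0$ for distinct vertices of a connected graph, or directly from the fact that every edge of a connected graph lies in at least one spanning tree, so $\sum_{\calE\in T\!\setminus\!\{\ell\}}\beta(\calE)<\sum_{\calE\in T}\beta(\calE)$.

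I expect the bijection in part~(ii)---together with the easily-overlooked observation that the second forest sum vanishes on the diagonal---to be the main (if modest) obstacle, whereas items~(i) and~(iii) are essentially substitutions into the already-established Proposition~\ref{ch:lm; prop:PTDF.1} and the definition of effective resistance.
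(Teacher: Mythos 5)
Your proposal is correct and follows essentially the same route as the paper's proof: part~(i) by the entrywise identification of $(BC^TL^\dag C)_{\ell\hat\ell}$ with the spectral expression in Proposition~\ref{ch:lm; prop:PTDF.1}, part~(ii) by noting the vanishing of the second forest sum on the diagonal and then using the weight-preserving bijection between spanning trees containing $\ell$ and forests in $T(\{i\},\{j\})$, and part~(iii) by direct substitution into~\eqref{eq:Dlst} together with the definition~\eqref{eq:effr} of effective resistance. No gaps.
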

Identity (ii) reveals that the more ways to connect every node without going through $\ell = (i,j)$, the smaller $D_{\ell\ell}$ is, i.e., the smaller it is the impact on the power flow on line $\ell$ when a unit of power is injected and withdrawn at nodes $i$ and $j$ respectively.

Recall that $T\!\setminus\! \{\ell\}$ is the collection of all spanning trees of $G$ consisting of edges in $E\!\setminus\! \{\ell\}$. Using~\eqref{eq:effr} and the fact that if $\ell$ is a bridge, then $T\!\setminus\! \{\ell\} = \emptyset$, we immediately obtain the following spectral characterization for the bridges of the network.
\newpage
\begin{cor}[Spectral characterization of network bridges]
\label{ch:lm; corollary:PTDF.3}
If $G = (V, E, \bm{b})$ is a connected power network, the following three statements are equivalent:
\begin{itemize}
\item Line $\ell$ is a bridge;
\item $D_{\ell\ell} = 1$;
\item $R_{ij} = L^\dag_{ii} +  L^\dag_{jj} -  2 L^\dag_{ij} = b_{\ell}^{-1}$.
\end{itemize}
\end{cor}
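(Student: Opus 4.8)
The plan is to route all three equivalences through the two expressions for the diagonal PTDF entry $D_{\ell\ell}$ that are already supplied by Proposition~\ref{ch:lm; prop:PTDF.2}. The equivalence of the second and third statements is essentially immediate: by part (iii) of that proposition we have $D_{\ell\ell} = b_{\ell}\left(L^\dag_{ii} + L^\dag_{jj} - 2L^\dag_{ij}\right) = b_{\ell} R_{ij}$, where the last equality is just the definition~\eqref{eq:effr} of the effective resistance. Since $b_{\ell}>0$, dividing by $b_{\ell}$ shows that $D_{\ell\ell}=1$ holds if and only if $R_{ij} = L^\dag_{ii} + L^\dag_{jj} - 2L^\dag_{ij} = b_{\ell}^{-1}$, which is exactly the third statement.

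For the equivalence of the first two statements I would use the spanning-tree formula in part (ii) of the same proposition, namely $D_{\ell\ell} = 1 - \frac{\sum_{\calE\in T\setminus\{\ell\}}\beta(\calE)}{\sum_{\calE\in T}\beta(\calE)}$. The denominator is strictly positive because $G$ is connected, so $T\neq\emptyset$, and every spanning tree $\calE$ satisfies $\beta(\calE)=\prod_{k\in\calE}b_k>0$ as all susceptances are positive. Hence $D_{\ell\ell}=1$ if and only if the numerator $\sum_{\calE\in T\setminus\{\ell\}}\beta(\calE)$ vanishes. Because each summand is strictly positive, this sum equals zero precisely when the index set $T\setminus\{\ell\}$ is empty, i.e.\ when $G$ admits no spanning tree avoiding $\ell$. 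As $G$ is connected, $T\setminus\{\ell\}=\emptyset$ is equivalent to $G^{\{\ell\}}=(V,E\setminus\{\ell\})$ being disconnected, which by definition means that $\ell$ is a bridge. This produces the chain ``$\ell$ is a bridge'' $\iff T\setminus\{\ell\}=\emptyset \iff D_{\ell\ell}=1$, which together with the first paragraph closes the cycle of equivalences.

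I do not anticipate a genuine obstacle, since the heavy lifting has already been carried out in Proposition~\ref{ch:lm; prop:PTDF.2}; the only step requiring a little care is the implication from $D_{\ell\ell}=1$ to ``$\ell$ is a bridge.'' There one must explicitly invoke the strict positivity of the weights $\beta(\calE)$ to pass from the \emph{vanishing} of a sum of nonnegative terms to the \emph{emptiness} of the index set $T\setminus\{\ell\}$, rather than concluding merely that the numerator is small. The forward implication, ``$\ell$ is a bridge $\Rightarrow T\setminus\{\ell\}=\emptyset$,'' is exactly the fact already recorded in the text preceding the statement, so it can be cited directly.
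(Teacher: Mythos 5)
Your proposal is correct and follows essentially the same route as the paper: the paper's proof also derives the corollary directly from parts (ii) and (iii) of Proposition~\ref{ch:lm; prop:PTDF.2} together with the observation that $\ell$ is a bridge if and only if $T\!\setminus\!\{\ell\}=\emptyset$. Your write-up merely makes explicit the strict positivity of the weights $\beta(\calE)$, which the paper leaves implicit.
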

The proof immediately follows from the statements (ii) and (iii) of Proposition~\ref{ch:lm; prop:PTDF.2} noticing that line $\ell$ is a bridge if and only if it belongs to \textit{any} spanning tree of the power network $G$, or equivalently $T\!\setminus\! \{\ell\}=\emptyset$. 

Lastly, we present a sufficient condition for having zero PTDF in purely topological terms. Its proof is given in the Appendix and relies on the representation of the PTDF in terms of spanning forests given in~\eqref{eq:Dlst}.
\begin{thm}[Simple Cycle Criterion for PTDF]
\label{thm:zeroPTDF}
If there is no simple cycle in a power network $G = (V, E, \bm{b})$ that contains both lines $\ell$ and $\hat \ell$, then $D_{\ell \hat \ell} = 0$.
\end{thm}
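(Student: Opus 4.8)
The plan is to work directly with the spanning-forest representation of the PTDF in~\eqref{eq:Dlst}. Writing $\ell = (i,j)$ and $\hat\ell = (\hi,\hj)$, that identity expresses $D_{\ell\hat\ell}$ as $b_\ell$ times a difference of two weighted forest sums, divided by the (strictly positive) normalizing sum $\sum_{\calE\in T}\beta(\calE)$. Hence it suffices to show that the numerator vanishes, and in fact I would prove the stronger statement that both collections $T(\{i,\hi\},\{j,\hj\})$ and $T(\{i,\hj\},\{j,\hi\})$ are \emph{empty} under the hypothesis, which makes each of the two sums individually zero.

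The heart of the argument is a correspondence between two-tree spanning forests and spanning trees. Given any $\calE \in T(\{i,\hi\},\{j,\hj\})$, the forest $\calE$ consists of one tree containing $\{i,\hi\}$ and a disjoint tree containing $\{j,\hj\}$; adding the edge $\ell=(i,j)$ joins these two trees into a spanning tree $\calE \cup \{\ell\}$ of $G$. Because $\ell$ is the unique edge of $\calE \cup \{\ell\}$ crossing between the two former components, and $\hi$ lies in one component while $\hj$ lies in the other, the unique path in this spanning tree from $\hi$ to $\hj$ must traverse $\ell$.

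I would then add the line $\hat\ell = (\hi,\hj) \in E$ to the spanning tree $\calE \cup \{\ell\}$. Since adjoining one non-tree edge to a spanning tree creates a unique fundamental cycle, consisting of $\hat\ell$ together with the $\hi$--$\hj$ tree path, this cycle is simple and, by the previous step, contains $\ell$ as well as $\hat\ell$. This contradicts the hypothesis that no simple cycle of $G$ contains both $\ell$ and $\hat\ell$. Therefore $T(\{i,\hi\},\{j,\hj\}) = \emptyset$, and the identical argument, with the roles of $\hi$ and $\hj$ interchanged, gives $T(\{i,\hj\},\{j,\hi\}) = \emptyset$. Both sums in the numerator of~\eqref{eq:Dlst} are then empty and equal to zero, so $D_{\ell\hat\ell} = 0$.

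The individual steps are routine; the only points requiring care are verifying that $\ell$ is genuinely the \emph{unique} bridging edge of $\calE \cup \{\ell\}$ (so that the $\hi$--$\hj$ path is forced through $\ell$) and invoking the standard fact that the fundamental cycle obtained by adding a non-tree edge to a spanning tree is simple. I do not anticipate a serious obstacle beyond keeping the endpoint labels $i,j,\hi,\hj$ straight, including the degenerate cases in which $\ell$ and $\hat\ell$ share an endpoint, where the same construction still yields a genuine simple cycle through both lines.
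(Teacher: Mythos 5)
Your proof is correct and is essentially the paper's argument in contrapositive form: the paper likewise observes that a nonempty $T(\{i,\hi\},\{j,\hj\})$ (or $T(\{i,\hj\},\{j,\hi\})$) yields two vertex-disjoint tree paths which, together with $\ell$ and $\hat\ell$, form a simple cycle through both lines. Your detour through the spanning tree $\calE\cup\{\ell\}$ and the fundamental cycle of $\hat\ell$ produces exactly that same cycle, so the two write-ups differ only in phrasing.
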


\subsection{GLODF for non-cut set}
\label{sub:noncut}
Consider a connected power network $G=(V,E, \bm{b})$ with balanced injections $\pp$. When a subset of lines $\calE \subset E$ trips or is removed from service, the network topology changes and, as a consequence, the power flow redistributes on the \textit{surviving network} $G^\calE :=(V,E \setminus \calE)$ as prescribed by power flow physics.
To fully understand the power flow redistribution and unveil the impact of network topology on the flow changes, we distinguish two scenarios, depending on whether $\calE$ is a cut set or not for the power network $G$, and analyzed them separately. 

Even if the disconnection of the subset of lines $\calE \subset E$ can be deliberate, for instance due to maintenance or network optimization (as it will be the case in the Section~\ref{sec:optimization}), in most cases it is due to a \textit{contingency}, e.g., a line or node (bus) failure. For this reason and for conciseness, we present the next family of distribution factors using the standard power system terminology, referring to the lines in $\calE$ as \textit{outaged lines} and distinguishing between \textit{pre-contingency} and \textit{post-contingency line flows}.

Let $\calE\subsetneq E$ be any non-cut set of $k:=|\calE|$ lines that are simultaneously removed from the power network. The resulting surviving network $G^\calE=(V,E\setminus \calE)$ is still connected. Assuming that the injections $\pp$ remain unchanged (and thus balanced), the new power flows can be calculated directly from~\eqref{eq:flowsdc_model} after having updated the matrices $B$, $C$, and $L$ to reflect the structure of the surviving network $G^{\calE}=(V,E\setminus \calE)$. We now review this calculations in more detail and show how it leads to the notion of \textit{generalized line outage distribution factors (GLODFs}. These factors quantify the impact of the removal of each line in $\calE$ to each of the survival lines and are thus essential for any power network contingency analysis.

We first introduce some auxiliary notation. Let $\ff_\calE$ be the vector of the pre-contingency power flows on the outaged lines in $\calE$ and let $\ff_{-\calE}$ and $\tilde \ff_{-\calE}$ be the pre and post-contingency power flows respectively on the surviving lines in $-\calE := E\!\setminus\! \calE$. 
Partition the susceptance matrix $B$ and the incident matrix $ C$ into submatrices corresponding to surviving lines in $-\calE$ and outaged lines in $\calE$:
\begin{equation}
	B \ =: \ \textup{diag} (B_{-\calE}, \, B_{\calE}), \qquad  C \ =: \ [ C_{-\calE} \ \  C_{\calE}].
\label{ch:cf; subsec:LODF; eq:BCpartitions.1}
\end{equation}
Assuming that the injections $\pp$ remain unchanged, we can calculate the post-contingency network flows by solving the DC power flow equations~\eqref{eq:flowsdc_model}
$
	\tilde \ff_{-\calE} \ = \ B_{-\calE}  C_{-\calE}^T  L_{-\calE}^\dag  \pp,
$
expressed in terms of the Laplacian matrix $ L_{-\calE} :=  C_{-\calE} B_{-\calE}  C_{-\calE}^T$ of the surviving network. 

The main result of this section shows that for a non-cut set $\calE \subsetneq E$ the post-contingency flow net changes $\Delta \ff_{-\calE} := \tilde \ff_{-\calE} - \ff_{-\calE}$ depend linearly on the pre-contingency power flows $\ff_\calE$ on the outaged lines. Their sensitivity to $\ff_\calE$ defines a $|E \bs \calE| \times |\calE|$ matrix $K^\calE := (K^\calE_{\ell\hat \ell}, \ell\in E\setminus\! \calE, \hat \ell \in \calE)$, called the \textit{Generalized Line Outage Distribution Factor} (GLODF) matrix, through
\[
	\Delta \ff_{-\calE} = K^\calE \, \ff_\calE,
\]
i.e., $\Delta f_\ell = \tilde f_\ell - f_\ell \ = \ \sum_{\hat \ell\in \calE}\, K^\calE_{\ell\hat \ell} f_{\hat \ell}$, for any $\ell\in E\setminus\! \calE$. Like the PTDFs, also the GLODFs depend solely on the network topology and susceptances, as illustrated by the next theorem, which shows how the GLODF matrix $K^{\calE}$ can be calculated using spectral quantities.

To state this result, we first need some additional notation. Partition the PTDF matrix $D$ into submatrices corresponding to non-outaged lines in $-\calE$ and outaged lines in $\calE$, possibly after permutations of rows and columns. Since $D \ = \  B C^T  L^\dag  C$ from Proposition \ref{ch:lm; prop:PTDF.2}(i), these different blocks of $D$ can be written explicitly in terms of the submatrices of $B$ and $C$ introduced in~\eqref{ch:cf; subsec:LODF; eq:BCpartitions.1}:
\begin{equation}
	D = \begin{bmatrix} D_{-\calE, -\calE} & D_{-\calE\calE} \\ D_{\calE, -\calE} & D_{\calE\calE} \end{bmatrix} = 
	\begin{bmatrix} 
	B_{-\calE} C_{-\calE}^T  L^\dag  C_{-\calE} & B_{-\calE} C_{-\calE}^T  L^\dag  C_{\calE} \\ 
	B_{\calE} C_{\calE}^T  L^\dag  C_{-\calE} & B_{\calE} C_{\calE}^T  L^\dag  C_{\calE} \\
	\end{bmatrix}.
	\label{ch:cf; subset:glodf; eq:decomposeD.1}
\end{equation}

We now express the GLODF matrix $K^\calE$ explicitly in terms of the PTDF submatrices introduced in~\eqref{ch:cf; subset:glodf; eq:decomposeD.1}.

\begin{thm} [GLODF $K^\calE$ for non-cut set outage]
\label{ch:cf; thm:LODF.3}
Let $\calE\subsetneq E$ be a non-cut set outage for the power network $G=(V,E, \bm{b})$. 
Then, the matrix $I - D_{\calE\calE} \ = \ I - B_{\calE}  C_{\calE}^T  L^\dag  C_\calE$ is invertible and the net line flow changes $\Delta \ff_{-\calE}$ are given by
\begin{equation}
	\Delta \ff_{-\calE} = K^\calE \, \ff_\calE,
\label{ch:cf; subsec:lodf; eq:defKlF}
\end{equation}
where the GLODF matrix can be calculated as
\begin{equation}
	K^\calE  =  D_{-\calE\calE}\, \left( I - D_{\calE\calE} \right)^{-1} = B_{-\calE}  C_{-\calE}^T  L^\dag  C_\calE \left( I - B_{\calE}  C_{\calE}^T  L^\dag  C_\calE \right)^{-1}.
\label{ch:cf; eq:GLODF.1}
\end{equation}%
\end{thm}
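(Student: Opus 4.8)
The plan is to reduce everything to the Sherman--Morrison--Woodbury identity applied to invertible ``shifted Laplacians,'' using Proposition~\ref{ch:lm; eq:Laplacian-properties}(iv) to convert pseudo-inverses into genuine inverses. Since removing the lines in $\calE$ only deletes the rank-$|\calE|$ term $C_\calE B_\calE C_\calE^T$ from the decomposition $L = C_{-\calE}B_{-\calE}C_{-\calE}^T + C_\calE B_\calE C_\calE^T$, the surviving Laplacian is $L_{-\calE} = L - C_\calE B_\calE C_\calE^T$. Because $\calE$ is a non-cut set, $G^\calE$ is connected, so by Proposition~\ref{ch:lm; eq:Laplacian-properties}(iv) both $L$ and $L_{-\calE}$ have kernel exactly $\spann(\bmo)$. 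I would therefore set $\hat L := L + \tfrac1n \bmo\bmo^T$ and $\hat L_{-\calE} := L_{-\calE} + \tfrac1n\bmo\bmo^T = \hat L - C_\calE B_\calE C_\calE^T$, both invertible, and record that $L^\dag = \hat L^{-1} - \tfrac1n\bmo\bmo^T$ and $L_{-\calE}^\dag = \hat L_{-\calE}^{-1} - \tfrac1n\bmo\bmo^T$, so that $L_{-\calE}^\dag - L^\dag = \hat L_{-\calE}^{-1} - \hat L^{-1}$.

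The observation that makes the bookkeeping collapse is that every column of the incidence matrix sums to zero, so $C_\calE^T \bmo = 0$. This immediately gives $\hat L^{-1}C_\calE = L^\dag C_\calE$ and $C_\calE^T \hat L^{-1} = C_\calE^T L^\dag$, whence $C_\calE^T \hat L^{-1} C_\calE = C_\calE^T L^\dag C_\calE = B_\calE^{-1} D_{\calE\calE}$ by Proposition~\ref{ch:lm; prop:PTDF.2}(i) applied to the $\calE$-block. Consequently the ``capacitance'' matrix appearing in Woodbury factors cleanly as
\[
	B_\calE^{-1} - C_\calE^T \hat L^{-1} C_\calE = B_\calE^{-1}(I - D_{\calE\calE}).
\]

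For the invertibility claim I would invoke the matrix determinant lemma: writing $\hat L_{-\calE} = \hat L + C_\calE(-B_\calE)C_\calE^T$ gives $\det \hat L_{-\calE} = \det\hat L \cdot \det(I - B_\calE C_\calE^T \hat L^{-1}C_\calE) = \det\hat L \cdot \det(I - D_{\calE\calE})$, using the factoring above. Since the non-cut-set hypothesis guarantees $\hat L_{-\calE}$ is invertible and $\hat L$ is invertible, the factor $\det(I - D_{\calE\calE})$ must be nonzero, so $I - D_{\calE\calE}$ is invertible --- which is precisely the condition that legitimizes applying Woodbury in the next step.

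Finally, the formula. Applying the Woodbury identity to $\hat L_{-\calE}^{-1} = (\hat L - C_\calE B_\calE C_\calE^T)^{-1}$, then substituting $\bigl(B_\calE^{-1} - C_\calE^T\hat L^{-1}C_\calE\bigr)^{-1} = (I - D_{\calE\calE})^{-1}B_\calE$ together with $\hat L^{-1}C_\calE = L^\dag C_\calE$ and $C_\calE^T\hat L^{-1} = C_\calE^T L^\dag$, yields
\[
	L_{-\calE}^\dag - L^\dag = \hat L_{-\calE}^{-1} - \hat L^{-1} = L^\dag C_\calE (I - D_{\calE\calE})^{-1} B_\calE C_\calE^T L^\dag.
\]
Since $\Delta \ff_{-\calE} = B_{-\calE}C_{-\calE}^T(L_{-\calE}^\dag - L^\dag)\pp$, I would left-multiply by $B_{-\calE}C_{-\calE}^T$ and right-multiply by $\pp$, and then recognize $B_{-\calE}C_{-\calE}^T L^\dag C_\calE = D_{-\calE\calE}$ and $B_\calE C_\calE^T L^\dag \pp = \ff_\calE$, obtaining $\Delta\ff_{-\calE} = D_{-\calE\calE}(I - D_{\calE\calE})^{-1}\ff_\calE = K^\calE \ff_\calE$, as claimed. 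The only genuinely delicate point is the interplay between the pseudo-inverses and the rank-one shift: one must verify that replacing $\hat L^{-1}$ by $L^\dag$ is valid exactly where it is used, which rests entirely on $C_\calE^T\bmo = 0$, and that the non-cut-set hypothesis is precisely what makes $\hat L_{-\calE}$ invertible so that both the determinant argument and the Woodbury expansion go through.
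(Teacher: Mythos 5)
Your argument is correct, and every step checks out: the decomposition $L_{-\calE} = L - C_\calE B_\calE C_\calE^T$, the passage to the invertible shifts $\hat L$ and $\hat L_{-\calE}$ via Proposition~\ref{ch:lm; eq:Laplacian-properties}(iv), the cancellation of the rank-one correction against $C_\calE^T\bmo = 0$, the factorization of the capacitance matrix as $B_\calE^{-1}(I - D_{\calE\calE})$, the determinant-lemma argument for invertibility of $I - D_{\calE\calE}$ (note $\det(I - C_\calE^T\hat L^{-1}C_\calE B_\calE) = \det(I - B_\calE C_\calE^T\hat L^{-1}C_\calE)$ is needed but is the standard Sylvester identity), and the final contraction with $B_{-\calE}C_{-\calE}^T$ on the left and the balanced injection $\pp$ on the right. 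The route, however, is not the one the paper takes: the paper does not prove the theorem in-text but defers to \cite[Theorem~7]{TPS1}, whose derivation emulates the simultaneous outages by judiciously chosen injection changes $\Delta\pp$ at the endpoints of the outaged lines on the \emph{pre-contingency} network, solved self-consistently via the matrix inversion lemma applied to a reduced (grounded) Laplacian, and then translated into $L^\dag$ language using the identities of \cite{Guo2017}. Your proof instead performs a direct rank-$|\calE|$ Woodbury update of the post-contingency Laplacian itself, with the $\tfrac1n\bmo\bmo^T$ shift doing the work of grounding. The two derivations meet at the same capacitance matrix $I - D_{\calE\calE}$, but yours has the advantage of being self-contained within the objects already defined in this paper (no reference to a reduced Laplacian or an external reformulation), and it delivers the invertibility of $I - D_{\calE\calE}$ as a clean byproduct of the non-cut-set hypothesis rather than as a separate claim; the injection-emulation route, for its part, generalizes more readily to the cut-set case of Subsection~\ref{sub:cut}, where one genuinely needs to reason about mimicked injections at tie-line endpoints.
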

The proof is immediate using~\cite[Theorem 7]{TPS1}, after reformulating the results appearing there in terms of the inverse of reduced Laplacian matrix $A$ using the pseudo-inverse $L^\dag$ using the identities proved in~\cite{Guo2017}. 

This formula of $K^\calE$ is derived by considering the pre-contingency network with changes $\Delta \pp$ in power injections that are judiciously chosen to emulate the effect of simultaneous line outages in $E$. The reference \cite{AlsacStottTinney1983} seems to be the first to introduce the use of matrix inversion lemma to study the impact of network changes on line currents in power systems. This method is also used in \cite{StottAlsacAlvarado1985} to derive the GLODF for ranking contingencies in security analysis. The GLODF has also been derived earlier, e.g., in \cite{EnnsQuadaSackett1982}, and re-derived recently in \cite{GulerGrossLiu2007, GulerGross2007}, without the simplification of the matrix inversion lemma.

\subsubsection{Non-bridge outages}
\label{sub:lodfnonbridge}
We now briefly consider the special case in which the subset $\calE$ of outaged lines is still a non-cut set, but consists of a single line, i.e., $\calE = \{ \hat \ell\}$. Since the singleton $\{\hat \ell\}$ is a cut set if and only if $\hat \ell$ is a bridge, we are thus focusing on \textit{non-bridge line outages}.

The GLODF matrix $K^\calE$ introduced earlier rewrites now as a vector $(K_{\ell\hat \ell}, \ell \in E\setminus\! \{\hat \ell\})$, where, since there is no ambiguity, we suppressed the superscript ${\{\hat \ell\}}$ for compactness. We recover in this way the so-called \textit{line outage distribution factor} (LODF) $K_{\ell\hat \ell}$, which is formally defined to be the change $\Delta f_\ell := \tilde f_\ell - f_\ell$ in power flow on surviving line $\ell \neq \hat \ell$ when a \textit{single non-bridge} line $\hat \ell$ trips, normalized by the pre-contingency line flow $f_{\hat \ell}$:
\[
	K_{\ell\hat \ell}  :=  \frac{\Delta f_\ell}{f_{\hat \ell}}, \qquad \ell \neq \hat \ell \in E, \ \text{for non-bridge line }\hat \ell
\]
assuming that the injections $ \pp$ remain unchanged, cf.~\cite{Wood2013} and \cite{Guo2017}. The LODFs for non-bridge outages inherit all the properties of GLODFs for non-cut sets and, in particular, they are also independent of $\pp$. The next proposition reformulates the results in Theorem~\ref{ch:cf; thm:LODF.3} in the case of a single non-bridge failure, and relates the LODF to the weighted spanning forests of pre-contingency network; see~\cite[Theorem 6]{TPS1} for the proof of this latter fact.
\begin{prop}[LODF $K_{\ell \hat \ell}$ for non-bridge outage]
\label{ch:lm; thm:LODF.1}
Let $\hat \ell = (\hi, \hj)$ be a non-bridge outage that does not disconnect the power network $G=(V,E,\bm{b})$.
For any surviving line $\ell = (i, j) \neq \hat \ell$ the corresponding LODF is given by
\begin{equation}
K_{\ell \hat \ell} = \frac{D_{\ell\hat \ell}} {1 - D_{\hat \ell \hat \ell}} \ \ = \ \ 
	\frac{ b_{\ell} \left(  L^\dag_{i\hi} +  L^\dag_{j\hj} -  L^\dag_{i\hj} -  L^\dag_{j\hi} \right) }{ 1 \ - \ b_{\hat \ell} \left(  L^\dag_{\hi\hi} +  L^\dag_{\hj\hj} - 2 L^\dag_{\hi\hj} \right) }
	  = \frac{ b_{\ell} \left( R_{i\hj}-R_{i\hi}+R_{j\hi}-R_{j\hj}\right)}{2(1-b_{\hat \ell} R_{st})}
\label{ch:lm; eq:LODF.1a}
\end{equation}
and can be equivalently be expressed in terms of the spanning forests of $G$ as
\begin{equation}
	K_{\ell \hat \ell} = \frac{b_{\ell}} {\sum_{\calE\in T\!\setminus\! \{\hat \ell\} }\, \beta(\calE)} \Big( \sum_{\calE\in T(\{i,\hi\}, \{j,\hj\})} \beta(\calE) - \sum_{\calE\in T(\{i,\hj\}, \{j,\hi\})}\, \beta(\calE) \Big).
\label{ch:cf; subsec:lodf; eq:Kllhat.1}
\end{equation}
\end{prop}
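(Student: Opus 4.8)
The plan is to derive everything by specializing the general non-cut-set result, Theorem~\ref{ch:cf; thm:LODF.3}, to the singleton outage $\calE = \{\hat \ell\}$. In that case the $|E\setminus\!\calE|\times|\calE|$ GLODF matrix collapses to a single column, the block $D_{\calE\calE}$ becomes the scalar $D_{\hat \ell \hat \ell}$, and the block $D_{-\calE\calE}$ becomes the column vector $(D_{\ell\hat \ell})_{\ell\neq\hat \ell}$. Since $\hat \ell$ is a non-bridge, $\{\hat \ell\}$ is a non-cut set, so Theorem~\ref{ch:cf; thm:LODF.3} applies and guarantees in particular that $1 - D_{\hat \ell \hat \ell}$ is invertible (nonzero). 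The formula $K^\calE = D_{-\calE\calE}(I-D_{\calE\calE})^{-1}$ then reads entrywise as $K_{\ell\hat \ell} = D_{\ell\hat \ell}/(1 - D_{\hat \ell \hat \ell})$, which is the first equality in~\eqref{ch:lm; eq:LODF.1a}.

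The remaining equalities in~\eqref{ch:lm; eq:LODF.1a} follow by substituting the two closed forms for the PTDF entries established earlier. For the numerator I would insert the spectral expression $D_{\ell\hat \ell} = b_\ell(L^\dag_{i\hi}+L^\dag_{j\hj}-L^\dag_{i\hj}-L^\dag_{j\hi})$ from~\eqref{eq:Dlst}, and for the denominator the diagonal formula $D_{\hat \ell \hat \ell} = b_{\hat \ell}(L^\dag_{\hi\hi}+L^\dag_{\hj\hj}-2L^\dag_{\hi\hj}) = b_{\hat \ell}R_{\hi\hj}$ from Proposition~\ref{ch:lm; prop:PTDF.2}(iii); this yields the middle expression. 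To reach the final effective-resistance form, I would convert the numerator using the one-line algebraic identity $L^\dag_{i\hi}+L^\dag_{j\hj}-L^\dag_{i\hj}-L^\dag_{j\hi} = \tfrac12(R_{i\hj}-R_{i\hi}+R_{j\hi}-R_{j\hj})$, obtained by expanding each $R_{ab}=L^\dag_{aa}+L^\dag_{bb}-2L^\dag_{ab}$ via~\eqref{eq:effr} and observing that all the diagonal terms $L^\dag_{ii},L^\dag_{jj},L^\dag_{\hi\hi},L^\dag_{\hj\hj}$ cancel pairwise. Together with $D_{\hat \ell \hat \ell}=b_{\hat \ell}R_{\hi\hj}=b_{\hat \ell}R_{st}$ in the denominator, this produces the rightmost expression in~\eqref{ch:lm; eq:LODF.1a}.

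For the spanning-forest formula~\eqref{ch:cf; subsec:lodf; eq:Kllhat.1} I would again start from $K_{\ell\hat \ell}=D_{\ell\hat \ell}/(1-D_{\hat \ell \hat \ell})$ but now feed in the combinatorial forms: the forest expression for $D_{\ell\hat \ell}$ from~\eqref{eq:Dlst}, and the identity $1 - D_{\hat \ell \hat \ell} = \big(\sum_{\calE\in T\!\setminus\!\{\hat \ell\}}\beta(\calE)\big)/\big(\sum_{\calE\in T}\beta(\calE)\big)$, which is exactly the rearrangement of Proposition~\ref{ch:lm; prop:PTDF.2}(ii). The common denominator $\sum_{\calE\in T}\beta(\calE)$ then cancels between the PTDF numerator and the $1-D_{\hat \ell \hat \ell}$ factor, leaving precisely the quotient in~\eqref{ch:cf; subsec:lodf; eq:Kllhat.1}. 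Alternatively, this forest identity is established directly in~\cite[Theorem 6]{TPS1}, which may simply be cited.

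There is no genuine obstacle here: once Theorem~\ref{ch:cf; thm:LODF.3} and the two PTDF propositions are in hand, the proof is a sequence of substitutions. The only point requiring a small amount of care is the effective-resistance identity for the numerator, where one must track the signs of the four cross terms and confirm that the four diagonal $L^\dag$ entries indeed cancel; everything else is bookkeeping of the common denominator $\sum_{\calE\in T}\beta(\calE)$.
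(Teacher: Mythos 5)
Your proposal is correct and follows essentially the same route as the paper, which itself presents this proposition as a specialization of Theorem~\ref{ch:cf; thm:LODF.3} to the singleton outage $\calE=\{\hat\ell\}$ combined with the PTDF formulas of Propositions~\ref{ch:lm; prop:PTDF.1} and~\ref{ch:lm; prop:PTDF.2} (citing~\cite[Theorem 6]{TPS1} for the spanning-forest form, exactly as you note). Your sign-checking of the effective-resistance identity and the cancellation of $\sum_{\calE\in T}\beta(\calE)$ in the forest quotient are both accurate, so no gaps remain.
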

Identity~\eqref{ch:lm; eq:LODF.1a} can be equivalently derived using a rank-$1$ update of the pseudo-inverse $L^\dag$ as done by~\cite{Soltan2017b}. Each LODF accounts for all the alternative paths on which the power originally carried by line $\hat \ell$ can flow in the new topology and for how many of these the line $\ell$ belongs to. This dependence is made explicit in~\eqref{ch:cf; subsec:lodf; eq:Kllhat.1} in terms of the weighted spanning forests.

Recall that Proposition \ref{ch:lm; prop:PTDF.2} shows that $D_{\hat \ell \hat \ell} = 1$ if and only if line $\hat \ell$ is a bridge, which corroborates the fact that \eqref{ch:lm; eq:LODF.1a} is valid only for non-bridge failures.
We remark that, differently from the diagonal entries $(D_{\ell \ell}, \ell \in E)$ of the PTDF matrix, the factor $D_{\ell\hat \ell}$ can take any sign and thus so can $K_{\ell\hat \ell}$.
From~\eqref{ch:cf; subsec:lodf; eq:Kllhat.1} we immediately deduce that $K_{\ell\hat \ell}>0$ if $\ell$ and $\hat \ell$ share either the source node ($i = \hi$) or the terminal node ($j=\hj$), since in this case $T(\{i,\hj\}, \{j,\hi\}) = \emptyset$. 

We can also express LODF in \eqref{ch:lm; eq:LODF.1a} in matrix form and relate it to the PTDF matrix $D$ defined in Subsection~\ref{sub:PTDF}. Partition the edge set $E$ in two subsets, $E^\textup{b}$ and $E^\textup{n}$, by distinguishing between bridges and non-bridge lines and set $m_\textup{b}:=|E^\textup{b}|$ and $m_\textup{n}:=|E^\textup{n}|$. Consider the $m \times m_\textup{n}$ matrix $K^\textup{n} := \big( K_{\ell\hat \ell}\, , \, \ell \in E, \hat \ell \in E^\textup{n} \big)$ and $D^\textup{n} := \big( D_{\ell\hat \ell}\, , \, \ell \in E, \hat \ell \in E^\textup{n} \big)$ and rewrite them into two submatrices, possibly after permutations of rows and columns:
\[
	K^\textup{n} \ \ = \ \ \begin{bmatrix} K_\textup{bn} \\ K_\textup{nn} \end{bmatrix},  \qquad 
	D^\textup{n} \ \ = \ \ \begin{bmatrix} D_\textup{bn} \\ D_\textup{nn}  \end{bmatrix}
\]
%
Note we implicitly extended the LODF definition to the case in which $\ell=\hat\ell \in E^\textup{n}$ as follows: the diagonal entries of the submatrix $K_\textup{nn}$ are defined as
\[
	K^\textup{n}_{\hat \ell \hat \ell} := \frac{D_{\hat \ell\hat \ell} }{ 1 - D_{\hat \ell \hat \ell} } \qquad \text{ for non-bridge line }\hat \ell \in E^\textup{n}
\]
and represents the change in power flow on line $\hat \ell$ if $\hat \ell = (\hi, \hj)$ is \emph{not} outaged but injections $p_{\hi}$ and $p_{\hj}$ are changed by one unit. 

%
To express the LODF \eqref{ch:cf; subsec:lodf; eq:Kllhat.1} in matrix form in terms of $L^\dag$, partition the matrices $B$ and $C$ into two submatrices corresponding to the non-bridge lines and bridge lines (possibly after permutations of rows and columns), i.e., $B =: \ \textup{diag} \left(B_\textup{b}, \, B_\textup{n} \right)$ and $C \ =: \ [  C_\textup{b} \ \  C_\textup{n} ]$. Let $\textup{diag} (D_\textup{nn})$ be the $m_\textup{n}\times m_\textup{n}$ diagonal matrix whose diagonal  entries are $D_{\hat \ell \hat \ell}$ for non-bridge lines $\hat \ell$. 
\begin{cor}
\label{ch:cf; corollary:LODF.2}
With the submatrices defined above we have $D^\textup{n} \ = \ B  C^T L^\dag  C_\textup{n}$ and
\[
	K^\textup{n} \ = \ D^\textup{n} \left( I - \mathrm{diag}(D_\textup{nn}) \right)^{-1} \ = \ B  C^T  L^\dag  C_\textup{n} \left( I - \mathrm{diag} (B_\textup{n}  C_\textup{n}^T  L^\dag  C_\textup{n} ) \right)^{-1}.
\]
\end{cor}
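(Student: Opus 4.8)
The plan is to obtain both identities by reorganizing the entrywise LODF formula from Proposition~\ref{ch:lm; thm:LODF.1} into matrix form, using the factorization $D = BC^TL^\dag C$ established in Proposition~\ref{ch:lm; prop:PTDF.2}(i). No genuinely new computation is needed; the content is bookkeeping of submatrix partitions.

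First I would derive the identity $D^\textup{n} = BC^T L^\dag C_\textup{n}$. By definition $D^\textup{n} = (D_{\ell\hat \ell}, \ell\in E, \hat \ell\in E^\textup{n})$ is exactly the submatrix of $D$ obtained by keeping all rows and only the columns indexed by non-bridge lines. Since Proposition~\ref{ch:lm; prop:PTDF.2}(i) gives $D = BC^TL^\dag C$, and selecting the columns $\hat \ell\in E^\textup{n}$ amounts to replacing the rightmost factor $C$ by its submatrix $C_\textup{n}$ of the corresponding columns, the identity follows immediately.

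Next I would assemble $K^\textup{n}$ column by column. For each non-bridge line $\hat \ell\in E^\textup{n}$, Proposition~\ref{ch:lm; thm:LODF.1} gives $K_{\ell\hat \ell} = D_{\ell\hat \ell}/(1-D_{\hat \ell\hat \ell})$ for every surviving line $\ell\neq\hat\ell$, and the extended diagonal convention sets $K^\textup{n}_{\hat \ell\hat \ell} = D_{\hat \ell\hat \ell}/(1-D_{\hat \ell\hat \ell})$, so the same scalar relation $K_{\ell\hat \ell} = D_{\ell\hat \ell}/(1-D_{\hat \ell\hat \ell})$ holds for \emph{every} $\ell\in E$ and $\hat \ell\in E^\textup{n}$. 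Hence the $\hat \ell$-th column of $K^\textup{n}$ equals the $\hat \ell$-th column of $D^\textup{n}$ divided by $1-D_{\hat \ell\hat \ell}$. Since $\mathrm{diag}(D_\textup{nn})$ is by definition the diagonal matrix with entries $D_{\hat \ell\hat \ell}$, $\hat \ell\in E^\textup{n}$, scaling the columns of $D^\textup{n}$ in this way is precisely right-multiplication by $(I-\mathrm{diag}(D_\textup{nn}))^{-1}$, giving $K^\textup{n} = D^\textup{n}(I-\mathrm{diag}(D_\textup{nn}))^{-1}$. Invertibility of $I-\mathrm{diag}(D_\textup{nn})$ holds because $D_{\hat \ell\hat \ell}<1$ for every non-bridge $\hat \ell$, which follows from Proposition~\ref{ch:lm; prop:PTDF.2}(ii) together with Corollary~\ref{ch:lm; corollary:PTDF.3} (equality $D_{\hat \ell\hat \ell}=1$ characterizes bridges).

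Finally I would substitute the spectral expressions. The first step already yields $D^\textup{n} = BC^TL^\dag C_\textup{n}$, and for the diagonal factor I note that each entry $D_{\hat \ell\hat \ell}$ with $\hat \ell\in E^\textup{n}$ coincides with the corresponding diagonal entry of $B_\textup{n}C_\textup{n}^TL^\dag C_\textup{n}$, since the column of $C$ indexed by $\hat \ell$ is a column of $C_\textup{n}$ and the weight $b_{\hat \ell}$ is a diagonal entry of $B_\textup{n}$; thus $\mathrm{diag}(D_\textup{nn}) = \mathrm{diag}(B_\textup{n}C_\textup{n}^TL^\dag C_\textup{n})$, and combining gives the second displayed formula. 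The only point requiring care, and the closest thing to an obstacle, is verifying that the column normalization matches right-multiplication by the diagonal inverse uniformly over all rows, including the bridge rows $\ell\in E^\textup{b}$ and the extended diagonal rows. For a bridge row this is automatic: a bridge $\ell$ lies on no simple cycle and hence shares no simple cycle with any non-bridge $\hat \ell$, so $D_{\ell\hat \ell}=0$ by the Simple Cycle Criterion (Theorem~\ref{thm:zeroPTDF}) and the corresponding entries of both $K^\textup{n}$ and $D^\textup{n}(I-\mathrm{diag}(D_\textup{nn}))^{-1}$ vanish consistently.
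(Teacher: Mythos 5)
Your argument is correct and is exactly the intended one: the paper states this corollary without proof as an immediate consequence of Proposition~\ref{ch:lm; prop:PTDF.2}(i), the entrywise LODF formula~\eqref{ch:lm; eq:LODF.1a}, and the extended diagonal convention, which is precisely the bookkeeping you carry out. Your closing worry about bridge rows is a non-issue — the scalar relation $K_{\ell\hat\ell}=D_{\ell\hat\ell}/(1-D_{\hat\ell\hat\ell})$ already holds uniformly in $\ell$ — but the extra observation you make to resolve it is itself correct and harmless.
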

Given a non-cut set $\calE$ of lines, let $-\calE := E\!\setminus\! \calE$. Consider the submatrix $K_{-\calE\calE} := \big( K_{\ell\hat \ell}, \, \ell\in -\calE, \hat \ell\in \calE \big)$ of $K^\textup{n}$ and the submatrices $D_{-\calE\calE} := \big( D_{\ell\hat \ell}, \, \ell\in -\calE, \hat \ell\in \calE \big)$ and  $D_{\calE\calE} := \big( D_{\hat \ell\hat \ell}, \, \hat \ell\in \calE \big)$ of $D^\textup{n}$. Corollary \ref{ch:cf; corollary:LODF.2} allows us to rewrite the line outage distribution factors in~\eqref{ch:lm; eq:LODF.1a} in matrix form as
\begin{equation}
	K_{-\calE\calE}  =  D_{-\calE\calE} \left( I - \textup{diag} (D_{\calE\calE}) \right)^{-1}.
\label{ch:cf; sec:lodf; eq:K_{-FF}D_{-FF}}
\end{equation}
Equation~\eqref{ch:cf; sec:lodf; eq:K_{-FF}D_{-FF}}, despite looking similar to the formula~\eqref{ch:cf; eq:GLODF.1} for the GLODF matrix $K^\calE$ in Theorem~\ref{ch:cf; thm:LODF.3}, is fundamentally different since only the diagonal elements of $D_{\calE\calE}$ are used, while the full matrix $D_{\calE\calE}$ appears in~\eqref{ch:cf; eq:GLODF.1}.

It is important to stress that the LODF submatrix $K_{-\calE\calE}$ is \textit{not} the GLODF matrix $K^\calE$. Indeed the matrix $K_{-\calE\calE}$ is a compact way to write all the LODF factors $K_{\ell \hat \ell}$ corresponding to the \textit{individual} line outages $\hat \ell \in \calE$ and does \emph{not} give the sensitivity of power flows in the post-contingency network to a set $\calE$ of \textit{simultaneous} line outages. For this reason, each column of $K_{-\calE\calE}$ must be interpreted \emph{separately}: column $\hat \ell$ gives the power flow changes on each surviving line $\ell\in -\calE$ due to the outage of a \emph{single} non-bridge line $\hat \ell \in \calE$. Nonetheless, combining~\eqref{ch:cf; eq:GLODF.1} and~\eqref{ch:cf; sec:lodf; eq:K_{-FF}D_{-FF}}, we can show that in the case of a non-cut set $\calE$, the GLODF matrix $K^\calE$ and the LODF submatrix $K_{-\calE\calE}$ are still related as follows:
\begin{equation}
	K^\calE = K_{-\calE\calE} \left( I - \mathrm{diag}(D_{\calE\calE}) \right) \left( I - D_{\calE\calE} \right)^{-1}.
\label{ch:cf; eq:GLODF.1c}
\end{equation}
This expression shows that if $\calE = \{\hat \ell\}$ is a singleton, then the two matrices coincides, but in general $K^\calE \neq K_{-\calE\calE}$. 

\subsection{Islanding and GLODF for cut sets}
\label{sub:cut}

In this subsection we study the impact of cut set outages, extending the results of the previous subsection to the scenario where a set of lines trip simultaneously disconnecting the network into two or more connected components or islands. We first focus on the case of a general cut set outage and then later, in Subsection~\ref{sub:bridge}, specify our results in the case in which the cut set consist of a single line, i.e., the case of a bridge outage.

Consider a subset of lines $\calE \subset E$ that is a cut set for $G=(V,E)$, which means that the surviving network $G^\calE:=(V,E\bs\calE)$ consists of two or more islands $\calI_1,\dots, \calI_k$. A \textit{power balancing rule} needs to be invoked to rebalance power on each island after the contingency. We assume such operations on the islands are decoupled, so that we can study the power flow redistribution in each of them separately. Therefore, for the purpose of presentation, let us focus on one of these islands, say $\calI$. 

We can distinguish three types of failed lines and partition $\calE = \calE_\calI \cup \calE_{\partial \calI} \cup \calE_{\calI^c}$ accordingly as follows:
\begin{itemize}
	\item $\calE_\textrm{int} = \calE_\calI:= \set{(i,j)\in \calE ~:~ i\in\calI \text{ and } j\in\calI }$ is the subset of failed \textit{internal lines}, i.e., those both endpoints of which belong to $\calI$;
	\item $\calE_\textrm{tie} = \calE_{\partial \calI}:=\set{(i,j)\in \calE ~:~ i\notin\calI\text{ and }j\in\calI }$ is the subset of failed \textit{tie lines}, i.e., those that have exactly one endpoint in $\calI$; 
	\item $\calE_{\calI^c}$ is the subset of failed lines neither endpoints of which belongs to $\calI$.
\end{itemize}
Any outaged line in $\calE_{\calI^c}$ does not have a direct impact on the island $\calI$ and can thus be ignored and thus we can assume without loss of generality that $\calE = \calE_\textup{tie} \cup \calE_\textup{int}$. 

A main difference between a bridge outage and a cut-set outage is that in the former case $\calE = \calE_\textup{tie}$ consists of a single tie line, whereas in the latter case $\calE$ may contain internal lines in $\calE_\textup{int}$ as well. Since $\calI$ is connected, $\calE_\textup{int}$ is a non-cut set. The power flows on surviving lines in $\calI$ are impacted by both internal line outages in $\calE_\textup{int}$ and by tie line outages in $\calE_\textup{tie}$. 

Denote by $\pp_I := (p_i, i \in \calI)$ the pre-contingency injections in $\calI$. The power injections $\pp_I$ may not be balanced inside the island, as there could be a non-zero total power imbalance 
\[
	\im := \bm{1}^T \pp_\calI = \sum_{i \in \calI} p_i  = -\sum_{\hat \ell\in \calE_\textup{tie}} f_{\hat \ell}.%
\]
If $\im < 0$, then it means that pre-contingecy the island was importing power, while in the opposite case, the island was exporting power. In either case, if $\im \neq 0$, the network operator has to intervene with a \textit{balancing rule} that, by adjusting injections (generators and/or loads) in some of the nodes, ensures that the power balance is restored in $\calI$.

A popular balancing rule, named \emph{proportional control}, is to share the imbalance among a set of participating nodes with a fixed proportion. Specifically a proportional control $\mathbb G_{\a}$ is defined by a nonnegative vector $\a := (\alpha_i, i \in \calI)$ such that $\sum_{i \in \calI}\, \alpha_i = 1$, with the interpretation that, post contingency, each node $i \in \calI$ adjusts its injection by the amount $\Delta p_i := -\alpha_i \im$. The injection adjustments vector is then
\begin{equation}
	\Delta \pp_{\a} := ( \Delta p_i, i \in \calI) = -\im \sum_{i \in \calI} \alpha_i \ee_i,
\label{ch:cf; subsec:bridgeoutage; eq:Galpha.1}
\end{equation}
where $\ee_i$ is the standard unit vector of size $n_\calI$. We refer to each node $i \in \calI$ such that $\alpha_i>0$ as \textit{participating node}. All participating nodes adjust their injections in the same direction, whose sign depends on the sign of $\im$.
The injections are therefore changed from $\pp$ to a post-contingency vector $ \pp + \Delta \pp_\alpha$. The proportional control rebalances the power on $\calI$: the assumption $\sum_{i \in \calI}\, \alpha_i = 1$ guarantees that the post-contingency injections are zero-sum, as
\[
	\bm{1}^T ( \pp + \Delta \pp_\alpha) =\bm{1}^T ( \pp - \im \sum_{i \in \calI} \alpha_i \ee_i ) = \im -\im = 0.
\]

We now derive the impact of the cut set $\calE$ outage on the surviving lines in island $\calI$.
Let $\ff$ be the pre-contingecy power flows on the lines in $\calI$ and further partition $\ff = (\ff_{-\textup{int}}, \ff_\textup{int})$, where $\ff_\textup{int} := (f_\ell, \ell\in \calE_\textup{int})$ denote the pre-contingency power flows on the outaged internal lines, $\ff_{-\textup{int}} := (f_\ell, \ell\in E_\calI \!\setminus\! \calE_\textup{int})$ those on the surviving lines in $\calI$.
Let $B, C, L^\dagger$ denote the susceptance matrix, the incidence matrix and the graph Laplacian pseudo-inverse associated with the island $\calI$. We also partition the matrices $B, C$ according to surviving lines and outaged internal lines as $B =:  \textup{diag} \left(B_{-\textup{int}}, B_\textup{int} \right)$ and $C=:\begin{bmatrix} C_{-\textup{int}}, & C_\textup{int} \end{bmatrix}$.

When multiple lines are tripped from the grid simultaneously disconnecting the network, the aggregate impact in general is different from the aggregate effect of tripping the lines separately. Nonetheless, as the following result shows, the impact from balancing rule turns out to be separable from the rest.

\begin{thm}[GLODF for cut set with proportional control $G_{\a}$]
The net change on the surviving lines of the island $\calI$ due to a cut set outage $\calE=\calE_\textup{tie} \cup \calE_\textup{int}$ is given by
\begin{equation}
\Delta \ff_{-\textup{int}} =  K^{\calE_\textup{int}} \, \ff_{\textup{int}} +  \sum_{\hat \ell\in \calE_\textup{tie}} f_{\hat \ell}  \sum_{k: \alpha_k>0} \alpha_k\, \left( D_{-\textup{int}, k j(\hat \ell)} + K^{\calE_\textup{int}}\, D_{\textup{int}, k j(\hat \ell)} \right),
\label{ch:cf; subsec:bridgeoutage; eq:DeltaP.1}
\end{equation}
where $j(\hat \ell)$ is the endpoint of tie line $\hat \ell$ that lies in $\calI$.
\end{thm}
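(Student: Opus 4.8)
The plan is to fix attention on the single island $\calI$, realize the pre-contingency internal flows as the solution of a self-contained power flow problem on $\calI$ driven by a balanced \emph{effective injection}, and then bridge the pre- and post-contingency configurations through one intermediate configuration so that the two terms of \eqref{ch:cf; subsec:bridgeoutage; eq:DeltaP.1} emerge from a clean superposition.

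First I would record the effective-injection representation. Writing the conservation law $\pp = C\ff$ at each node of $\calI$ and moving the tie-line contributions to the injection side shows that the pre-contingency internal flows $\ff=(\ff_{-\textup{int}},\ff_\textup{int})$ coincide with the flows produced on the island graph $G_\calI$ (with all of its internal lines, including those in $\calE_\textup{int}$) by the injection
\[
	\pp_\calI^{\mathrm{eff}} := \pp_\calI + \sum_{\hat \ell\in\calE_\textup{tie}} f_{\hat \ell}\, \ee_{j(\hat \ell)}.
\]
Since $\im = -\sum_{\hat \ell\in\calE_\textup{tie}} f_{\hat \ell}$, one checks $\bmo^T\pp_\calI^{\mathrm{eff}} = \im - \im = 0$, so $\pp_\calI^{\mathrm{eff}}$ is balanced on $\calI$ and the island DC equations have a unique solution reproducing $\ff$. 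I would then introduce the intermediate flows $\ff^{\mathrm{mid}}_{-\textup{int}}$ obtained on the surviving island $G_\calI^{\calE_\textup{int}}$ (internal outaged lines removed) while still driving it with $\pp_\calI^{\mathrm{eff}}$, and split
\[
	\Delta\ff_{-\textup{int}} = \big(\ff^{\mathrm{mid}}_{-\textup{int}} - \ff_{-\textup{int}}\big) + \big(\ff^{\mathrm{post}}_{-\textup{int}} - \ff^{\mathrm{mid}}_{-\textup{int}}\big).
\]
The first bracket is exactly a non-cut outage of $\calE_\textup{int}$ inside $\calI$ at the fixed balanced injection $\pp_\calI^{\mathrm{eff}}$; because $\calE_\textup{int}$ is a non-cut set for the connected island, Theorem~\ref{ch:cf; thm:LODF.3} applies and gives $\ff^{\mathrm{mid}}_{-\textup{int}} - \ff_{-\textup{int}} = K^{\calE_\textup{int}}\ff_\textup{int}$, the first term of \eqref{ch:cf; subsec:bridgeoutage; eq:DeltaP.1}.

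The second bracket lives on the fixed network $G_\calI^{\calE_\textup{int}}$ and is produced purely by the injection change $\pp_\calI^{\mathrm{post}} - \pp_\calI^{\mathrm{eff}}$, where $\pp_\calI^{\mathrm{post}} = \pp_\calI + \Delta\pp_\a = \pp_\calI - \im\sum_{k}\alpha_k\ee_k$. Substituting $-\im = \sum_{\hat \ell} f_{\hat \ell}$ and using $\sum_k\alpha_k = 1$ collapses this difference into a sum of unit transfers,
\[
	\pp_\calI^{\mathrm{post}} - \pp_\calI^{\mathrm{eff}} = \sum_{\hat \ell\in\calE_\textup{tie}} f_{\hat \ell}\sum_{k:\alpha_k>0}\alpha_k\big(\ee_k - \ee_{j(\hat \ell)}\big),
\]
so by the PTDF characterization of Proposition~\ref{ch:lm; prop:PTDF.1} applied to the surviving island it equals $\sum_{\hat \ell} f_{\hat \ell}\sum_k\alpha_k\,\wt D_{-\textup{int},\,k\,j(\hat \ell)}$, where $\wt D$ denotes the PTDF matrix of $G_\calI^{\calE_\textup{int}}$.

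The last and most delicate step is to re-express the surviving-network PTDF $\wt D$ through the original-island PTDF $D$. For a unit transfer $\ee_k - \ee_{j(\hat \ell)}$ in $G_\calI$ the induced flows form the column $D_{\cdot,\,k j(\hat \ell)}$, carrying in particular $D_{\textup{int},\,k j(\hat \ell)}$ on the lines of $\calE_\textup{int}$; removing those lines (again a non-cut outage) redistributes the flow on the surviving lines by $K^{\calE_\textup{int}} D_{\textup{int},\,k j(\hat \ell)}$, whence
\[
	\wt D_{-\textup{int},\,k j(\hat \ell)} = D_{-\textup{int},\,k j(\hat \ell)} + K^{\calE_\textup{int}} D_{\textup{int},\,k j(\hat \ell)}.
\]
Inserting this into the second bracket produces the second term of \eqref{ch:cf; subsec:bridgeoutage; eq:DeltaP.1}, and summing the two brackets yields the claim. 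I expect this final identity to be the main obstacle: it forces a \emph{second} invocation of the non-cut GLODF of Theorem~\ref{ch:cf; thm:LODF.3}, now with PTDF columns playing the role of pre-contingency flows, and it requires that the effective-injection bookkeeping (the signs of the tie-line flows and the balance of $\pp_\calI^{\mathrm{eff}}$) be carried out consistently so that every intermediate injection stays balanced and the DC equations remain uniquely solvable.
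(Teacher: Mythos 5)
Your proof is correct and follows essentially the same route as the one the paper defers to (\cite[Theorem 5]{TPS2}): tie-line outages are emulated by injection changes at their island-side endpoints $j(\hat\ell)$, the proportional-control adjustments are superposed by linearity, and the non-cut GLODF $K^{\calE_\textup{int}}$ of Theorem~\ref{ch:cf; thm:LODF.3} absorbs the internal outages. The final identity $\wt{D}_{-\textup{int},\,k j(\hat\ell)} = D_{-\textup{int},\,k j(\hat\ell)} + K^{\calE_\textup{int}} D_{\textup{int},\,k j(\hat\ell)}$ that you flag as the delicate step is indeed valid, since the relation in Theorem~\ref{ch:cf; thm:LODF.3} is a matrix identity holding for every balanced injection and hence applies to the unit transfer $\ee_k - \ee_{j(\hat\ell)}$.
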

For a proof, the reader can look at~\cite[Theorem 5]{TPS2} and eq.~(12) therein. The RHS of~\eqref{ch:cf; subsec:bridgeoutage; eq:DeltaP.1} consists of two terms:
\[
\Delta \ff_{-\textup{int}} =  \underbrace{ K^{\calE_\textup{int}} \, \ff_{\textup{int}} }_{ \textup{int. line $\calE_\textup{int}$ outage} }
	\ \ + \ \ 	
	\underbrace{ \sum_{\hat \ell\in \calE_\textrm{tie}} f_{\hat \ell}  \sum_{k: \alpha_k>0} \alpha_k\, \left( D_{-\textrm{int}, k j(\hat \ell)} + K^{\calE_\textrm{int}}\, D_{\textrm{int}, k j(\hat \ell)} \right)}_{ \text{tie line $\calE_\textrm{tie}$ outage} }.
\]
The first term on the RHS accounts for the impact of the outage of the non-cut set $\calE_\textup{int}$ of internal lines in $\calI$ through the GLODF $K^{\calE_\textup{int}} := B_{-\textup{int}} \, C_{-\textup{int}}^T \, L \, C_{\textup{int}} (I - B_{\textup{int}} \, C_{\textup{int}}^T \, L^\dagger \, C_{\textup{int}})^{-1}$ (cf.~Theorem \ref{ch:cf; thm:LODF.3}). The second term captures the impact of the proportional control $\mathbb G_\alpha$ in response to tie line outages in $\calE_\textup{tie}$, which also accounts for the mimicked power injection change due to the outaged tie lines $\hat \ell \in \calE_\textup{tie}$ at their endpoints $j(\hat \ell)$ in $\calI$. We stress that the impact of the proportional control $\mathbb G_\alpha$ through internal lines in $\calE_\textup{int}$ is scaled by the GLODF $K^{\calE_\textup{int}}$.

Note that if there are no tie line outages, i.e., $\calE_\textup{tie} = \emptyset$, then \eqref{ch:cf; subsec:bridgeoutage; eq:DeltaP.1} reduces to Theorem \ref{ch:cf; thm:LODF.3} for a non-cut set outage. 
If there are no internal lines $\calE_\textup{int} = \emptyset$, the first term of \eqref{ch:cf; subsec:bridgeoutage; eq:DeltaP.1} is zero and the second term simplifies, so that 
\[
\Delta \ff_{-\textup{int}}  = \sum_{\hat \ell\in \calE_\textup{tie}} f_{\hat \ell}  \sum_{k: \alpha_k>0} \alpha_k\, D_{-\textup{int}, k j(\hat \ell)}.
\]

\subsubsection{Bridge outage}
\label{sub:bridge}
Consider now the case of a cut set outage, in which the $\calE$ consist of a single line. This means that $\calE_\textup{int} = \emptyset$, $\calE_\textup{tie}=\{\hat \ell\}$, and $\hat \ell = (\hat i, \hat j)$ is a bridge. After the outage of line $\ell$, the power network gets disconnected into two islands, one containing node $\hat i$ and one containing node $\hat j$, which we denote by $\calI_{\hat i}$ and $\calI_{\hat j}$ respectively. 

If the pre-contingency power flow $f_{\hat \ell} = 0$ on the outaged line $\hat \ell$, then the operations of these islands, as modeled by the DC power flow equations, are decoupled pre-contingency and therefore remain unchanged post-contingency.
We henceforth consider only the case where $f_{\hat \ell}\neq 0$, which means that there is a nonzero power imbalance over each of the two islands post contingency. For example, if $f_{\hat \ell}>0$, i.e., the power flows from island $\calI_{\hat i}$ to island $\calI_{\hat j}$ pre contingency, then there is a power surplus of size $f_{\hat \ell}$ on $\calI_{\hat i}$ and a shortage of size $f_{\hat \ell}$ on $\calI_{\hat j}$ post contingency.
Like before, since the two islands operate independently post contingency, each according to DC power flow equations, we can focus only on one island, say $\calI_{\hat j}$. Let $\Delta \ff := (\Delta f_\ell, \ell\in E_{\calI_{\hat j}})$ be the vector of net power flows changes on all lines in island $\calI_{\hat j}$. Then, \eqref{ch:cf; subsec:bridgeoutage; eq:DeltaP.1} simplifies to
\[
	\Delta \ff = f_{\hat \ell}  \sum_{k\in \calI_{\hat j}} \alpha_k\, \Big( D_{E_{\calI_{\hat j}}, k \hat j} \Big).
\]
More explicitly, for any line $\ell = (i,j) \in E_{\calI_{\hat j}}$ we have
\[
\frac{\Delta f_\ell} {f_{\hat \ell}}
	=  \sum_{k: \alpha_k>0} \alpha_k \, D_{\ell, k\hat j}
	=  \sum_{k\in \calI_{\hat j}} \alpha_k \, b_\ell \left( L^\dagger_{ik} + L^\dagger_{j\hat j} - L^\dagger_{i\hat j} - L^\dagger_{jk} \right),
\]
where the last equality follows from Theorem \ref{ch:lm; prop:PTDF.1}. Therefore the power flow change on line $\ell$ is the superposition of two factors: (i) injecting $\alpha_k f_{\hat \ell}$ at participating nodes $k\in \calI_{\hat j}$ and (ii) withdrawing $f_{\hat \ell}$ at node $\hat j$.
Using the last identity, we can thus extend the definition of LODF $K_{\ell\hat \ell}$ to allow for outages of any bridge $\hat \ell = (\hat i, \hat j)$ with nonzero flow $f_{\hat \ell}\neq 0$ under the proportional control $\mathbb G_{\a}$ as follows
\begin{align}
K_{\ell\hat \ell} :=  \frac{ \Delta f_\ell } { f_{\hat \ell} } = \sum_{k ~:~ \alpha_k>0}  \alpha_k\, D_{\ell, k\hat j},\qquad \text{for all } \ell\in E_{\calI_{\hat j}}.
\label{ch:cf; subsec:bridgeoutage; eq:Klhatl.1}
\end{align}

\FloatBarrier

\section{Localization results}
\label{sec:localization}

In this section we show how the block and bridge-block decompositions of the power network $G$ fully captures the power redistribution effects after a contingency and give a very transparent view of the global network robustness against failures. More specifically, we characterize when LODF and GLODF are nonzero in terms of these network structures. The highlight is that an outage line has only a local impact which is confined in the (bridge-)block to which it belongs. We present the results for non-cut set and cut set outages separately in the next two subsections.

\subsection{Non-cut set outages}
The goal of this subsection is to show that the block decomposition of the network $G$ induces a block-diagonal structure for the PTDF and LODF matrices. The key idea is using the representation of these factors in terms of spanning forest and the intimate relation between the block decomposition and the cycles in the graph $G$. Lastly, we show that when the considered outage $\calE$ is a non-cut set, the GLODF matrix $K^\calE$ inherits the block-diagonal structure of the PTDF matrix.

The next lemma states the \textit{Simple Cycle Criterion}, which has been proved in~\cite[Theorem 8]{TPS1}. It provides a sufficient condition for the LODF to be zero expressed purely in topological terms, namely using the cycles of the power network $G$.
\begin{lem}[Simple Cycle Criterion] \label{lem:simple_cycle}
Consider a non-bridge line $\hat \ell \in E$ and any other line $\ell \in E$. If there is no simple cycle in $G$ that contains both $\ell$ and $\hat \ell$, then the LODF $K_{\ell \hat \ell}= 0$. 
\end{lem}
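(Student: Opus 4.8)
The plan is to reduce the topological statement (Lemma~\ref{lem:simple_cycle}) to the already-established spanning-forest criterion, Theorem~\ref{thm:zeroPTDF}, via the LODF formula in Proposition~\ref{ch:lm; thm:LODF.1}. The cleanest route is to observe that the LODF $K_{\ell\hat\ell}$ and the PTDF $D_{\ell\hat\ell}$ differ only by the positive scalar factor $1/(1 - D_{\hat\ell\hat\ell})$, which is well-defined and nonzero precisely because $\hat\ell$ is a non-bridge (so $D_{\hat\ell\hat\ell} < 1$ by Proposition~\ref{ch:lm; prop:PTDF.2}(ii)). Hence $K_{\ell\hat\ell} = 0$ if and only if $D_{\ell\hat\ell} = 0$, and it suffices to show that the absence of a simple cycle through both $\ell$ and $\hat\ell$ forces $D_{\ell\hat\ell} = 0$.

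\emph{First} I would invoke Theorem~\ref{thm:zeroPTDF} (the Simple Cycle Criterion for PTDF), whose hypothesis is \emph{exactly} that no simple cycle contains both $\ell$ and $\hat\ell$, and whose conclusion is $D_{\ell\hat\ell}=0$. \emph{Then} I would combine this with the identity
\[
	K_{\ell\hat\ell} = \frac{D_{\ell\hat\ell}}{1 - D_{\hat\ell\hat\ell}}
\]
from~\eqref{ch:lm; eq:LODF.1a}, together with the fact that $1 - D_{\hat\ell\hat\ell} \neq 0$ for a non-bridge line $\hat\ell$ (Corollary~\ref{ch:lm; corollary:PTDF.3}), to conclude $K_{\ell\hat\ell}=0$. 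In this reading the lemma is essentially an immediate corollary, and the entire content has been pushed into Theorem~\ref{thm:zeroPTDF}.

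Alternatively, one could give a self-contained combinatorial argument directly from the spanning-forest expression~\eqref{ch:cf; subsec:lodf; eq:Kllhat.1}. Writing $\ell=(i,j)$ and $\hat\ell=(\hi,\hj)$, one would need to show that the two sums over forests $T(\{i,\hi\},\{j,\hj\})$ and $T(\{i,\hj\},\{j,\hi\})$ cancel. The idea is to build a sign-reversing bijection between these two families of two-component spanning forests: given a forest $\calF$ in the first family, adding back either $\ell$ or $\hat\ell$ and removing a suitable edge should produce a forest in the second family with the same susceptance weight $\beta(\calF)$. The existence of such a weight-preserving involution is governed exactly by how $\ell$ and $\hat\ell$ sit relative to the cycles of $G$, which is why the ``no common simple cycle'' hypothesis is the natural condition.

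\textbf{The hard part} in the first (preferred) approach is essentially bookkeeping — checking that the hypotheses of Theorem~\ref{thm:zeroPTDF} and of Proposition~\ref{ch:lm; thm:LODF.1} align (both require $\hat\ell$ non-bridge, which is given), so that the proof is genuinely a one-line deduction. The real difficulty lives upstream, in Theorem~\ref{thm:zeroPTDF} itself; if instead one pursues the direct combinatorial route, the main obstacle is constructing the sign-reversing involution on spanning forests and verifying that ``no shared simple cycle'' is precisely what guarantees every forest in one family has an image in the other. Given that the PTDF version is already available in the excerpt, I expect the intended proof to be the short deduction via the factor $1/(1-D_{\hat\ell\hat\ell})$.
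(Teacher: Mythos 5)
Your proposal is correct and is exactly the intended derivation: the paper itself only cites \cite[Theorem 8]{TPS1} for this lemma, but the in-paper machinery you assemble — Theorem~\ref{thm:zeroPTDF} for $D_{\ell\hat\ell}=0$, the identity $K_{\ell\hat\ell}=D_{\ell\hat\ell}/(1-D_{\hat\ell\hat\ell})$ from Proposition~\ref{ch:lm; thm:LODF.1}, and $D_{\hat\ell\hat\ell}<1$ for a non-bridge line from Proposition~\ref{ch:lm; prop:PTDF.2}(ii) together with Corollary~\ref{ch:lm; corollary:PTDF.3} — yields precisely this one-line deduction, and the equivalence $D_{\ell\hat\ell}=0 \Leftrightarrow K_{\ell\hat\ell}=0$ you rely on is exactly what the paper records as Proposition~\ref{prop:zeroLODFPTDF}(i). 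No gaps; the alternative combinatorial involution you sketch is unnecessary given that Theorem~\ref{thm:zeroPTDF} is already proved in the appendix.
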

Used in combination with the network block decomposition introduced in Section~\ref{sub:basic}, this criterion can be used to show that the PTDFs and LODFs are always zero when they refer to two lines that belong to different blocks of the power network $G$.
\begin{prop}[LODF and PTDF characterization for non-bridge single line failure]
\label{prop:zeroLODFPTDF}
Consider a single non-bridge line $\hat \ell = (\hat i, \hat j)$ outage. 
For any surviving line $\ell = (i, j) \neq \hat\ell$ the following statements hold:
\begin{itemize}
\item[(i)] The PTDF $D_{\ell \hat\ell} = 0$ if and only if the LODF $K_{\ell \hat\ell} = 0$.
\item[(ii)] If $\ell$ and $\hat \ell$ are in different blocks of $G$, then $K_{\ell \hat\ell} = 0$ and $D_{\ell \hat\ell} = 0$.
\end{itemize}
\end{prop}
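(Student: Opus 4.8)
The plan is to prove the two statements in sequence, leveraging the spectral/spanning-forest representations already established and the Simple Cycle Criterion of Lemma~\ref{lem:simple_cycle}.

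For part (i), I would work directly from the relation $K_{\ell \hat\ell} = D_{\ell\hat\ell}/(1 - D_{\hat\ell\hat\ell})$ established in Proposition~\ref{ch:lm; thm:LODF.1}. Since $\hat\ell$ is a non-bridge line, Corollary~\ref{ch:lm; corollary:PTDF.3} guarantees that $D_{\hat\ell\hat\ell} < 1$ strictly, hence the denominator $1 - D_{\hat\ell\hat\ell}$ is a nonzero (in fact strictly positive, by Proposition~\ref{ch:lm; prop:PTDF.2}(ii)) finite quantity. Therefore $K_{\ell\hat\ell}$ and $D_{\ell\hat\ell}$ differ only by multiplication by a strictly positive scalar, so one vanishes if and only if the other does. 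This immediately yields the equivalence.

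For part (ii), the strategy is purely topological. I would first invoke the Simple Cycle Criterion (Lemma~\ref{lem:simple_cycle}): if no simple cycle contains both $\ell$ and $\hat\ell$, then $K_{\ell\hat\ell} = 0$. The task thus reduces to showing that if $\ell$ and $\hat\ell$ lie in \emph{different} blocks of $G$, then no simple cycle can contain them both. This follows from the defining property of the block decomposition recalled in Section~\ref{sub:basic}: the blocks are precisely the equivalence classes of edges under the relation ``being contained in a common cycle.'' Hence two edges in distinct blocks cannot share any cycle, in particular no simple cycle, and the criterion gives $K_{\ell\hat\ell} = 0$. Combining this with part (i) then forces $D_{\ell\hat\ell} = 0$ as well, completing the proof.

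I expect the main (and only real) obstacle to be making the topological step rigorous rather than merely intuitive. One must be careful that the equivalence relation underlying the block decomposition is stated in terms of \emph{cycles} (as in Section~\ref{sub:basic}), whereas the Simple Cycle Criterion speaks of \emph{simple} cycles; since any cycle in the sense used for the block decomposition is by definition a simple cycle (a circuit whose only repeated vertices are its endpoints), these notions coincide here, and the two lines sharing no common cycle is exactly the hypothesis of Lemma~\ref{lem:simple_cycle}. Once this alignment of definitions is made explicit, the argument closes cleanly.
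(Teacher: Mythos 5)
Your proposal is correct and follows exactly the route the paper intends: part (i) from the relation $K_{\ell\hat\ell} = D_{\ell\hat\ell}/(1-D_{\hat\ell\hat\ell})$ with $D_{\hat\ell\hat\ell}<1$ for a non-bridge line, and part (ii) from the Simple Cycle Criterion combined with the fact that the blocks are the equivalence classes of edges under containment in a common (simple) cycle. Your explicit remark aligning the ``cycle'' used in the block decomposition with the ``simple cycle'' of Lemma~\ref{lem:simple_cycle} is a worthwhile clarification, but the argument is otherwise the same as the paper's.
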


To formally state the next results for the distribution factor matrices when $\calE$ is a non-cut set outage, we first need some preliminary definitions to partition outage lines in $\calE$ and surviving lines in $-\calE$ depending on which block they belong to. More specifically, let $E = E_1 \cup \cdots \cup E_k$ be the block decomposition of (the edge set $E$ of) the network $G$. We partition accordingly into $k$ disjoint subsets both the outaged line set $\calE$ and the surviving lines set $-\calE$ as follows:
\begin{subequations}
\begin{align}
    \calE &= \bigcup_{j} \calE_j \quad \text{ with } \quad \calE_j := \calE\cap E_j, \qquad j=1, \dots, k,\\
    -\calE &= \bigcup_{j} \calE_{-j}  \quad \text{ with } \quad \calE_{-j} := -\calE\cap E_j = E_j \!\setminus\! \calE_j, \qquad j=1, \dots, k.
\end{align}
\label{eq:-EEdecomposition}
\end{subequations}
Without loss of generality we assume that the lines in $\calE$ are indexed so that the outaged lines are ordered depending on which subset $\calE_j$ they belong to. Similarly, the surviving lines in $-\calE$ are indexed so that the surviving lines are ordered depending on which subset $-\calE_j$ they belong to. In view of the partition in~\eqref{eq:-EEdecomposition}, the submatrices $B_{-\calE}$, $B_\calE$, $C_{-\calE}$ and $C_\calE$ appearing in~\eqref{ch:cf; subsec:LODF; eq:BCpartitions.1} can be further decomposed as
\begin{align}
B  &=  \begin{bmatrix} B_{-\calE} & 0 \\ 0 & B_\calE \end{bmatrix} \ \ =: \ \ 
	\begin{bmatrix}  \text{diag} \left( B_{-j}, j = 1, \dots, k \right) & 0 \\ 
		0 &  \text{diag} \left( B_{j}, j = 1, \dots, k \right) \end{bmatrix},
\\
C  &=  \begin{bmatrix} C_{-\calE} & C_{\calE} \end{bmatrix}  \ \ =: \ \ 
	\begin{bmatrix} C_{-1} & \cdots & C_{-k} & \ & C_{1} & \cdots & C_{k} \end{bmatrix}.
\end{align}
In particular, $C_{j}$ and $C_{-j}$ are the incidence matrices for the lines $\calE_j$ and $\calE_{-j}$, respectively.

\begin{prop}[Block diagonal structure of LODF and PTDF submatrices]
\label{prop:blockDF}
Let $\calE$ be non-cut set outage for the power network $G=(V,E,\bm{b})$. Then the two submatrices $D_{-\calE\calE}$ and $D_{\calE\calE}$ of the PTDF matrix defined in~\eqref{ch:cf; subset:glodf; eq:decomposeD.1} decompose as follows:
\begin{subequations}
\begin{equation}
    D_{-\calE\calE} =:  \begin{bmatrix} 
	D_{-1} & 0 & \dots & 0  \\  0 & D_{-2} & \dots & 0 \\ \vdots & \vdots & \ddots & \vdots \\ 0 & 0 & \dots & D_{-k} 
	\end{bmatrix}
	\qquad \text{ and } \qquad
	D_{\calE\calE}  =:  \begin{bmatrix} 
	D_1 & 0 & \dots & 0  \\  0 & D_2 & \dots & 0 \\ \vdots & \vdots & \ddots & \vdots \\ 0 & 0 & \dots & D_k 
	\end{bmatrix}
\label{ch:cf; sec:localization; eq:K-FF.1a}
\end{equation}
where, for each $j = 1, \dots, k$, $D_{-j}$ is the $|\calE_{-j}|\times |\calE_j|$ matrix given by $D_{-j}  := B_{-j} C_{-j}^T L^\dag C_j$, and $D_{j}$ is the $|\calE_{j}|\times |\calE_j|$ matrix given by $D_{-j}  := B_{j} C_{j}^T L^\dag C_j$. 

Furthermore, also the LODF matrix $K_{-\calE\calE}$ defined as in~\eqref{ch:cf; sec:lodf; eq:K_{-FF}D_{-FF}} has a block-diagonal structure, namely
\begin{equation}
K_{-\calE\calE}   =:  \begin{bmatrix} 
	K_1 & 0 & \dots & 0  \\  0 & K_2 & \dots & 0 \\ \vdots & \vdots & \ddots & \vdots \\ 0 & 0 & \dots & K_k 
	\end{bmatrix},
\label{ch:cf; sec:localization; eq:K-FF.1c}
\end{equation}
where $K_j$ is the $|\calE_{-j}|\times |\calE_j|$ matrix given by $K_{j} :=  D_{-j} \left( I - \mathrm{diag}(D_j) \right)^{-1}$, for $j=1,\dots,k$.
\label{ch:cf; sec:localization; eq:K-FF.1}
\end{subequations}
\end{prop}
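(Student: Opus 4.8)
The plan is to reduce the entire statement to the single-line vanishing result already established in Proposition~\ref{prop:zeroLODFPTDF}(ii), which guarantees that both $D_{\ell\hat\ell}$ and $K_{\ell\hat\ell}$ are zero whenever the lines $\ell$ and $\hat\ell$ lie in distinct blocks of $G$. Because the ordering convention fixed just before the statement indexes the outaged lines $\calE$ and the surviving lines $-\calE$ so that they are grouped according to the block $E_j$ containing them, the three matrices $D_{-\calE\calE}$, $D_{\calE\calE}$ and $K_{-\calE\calE}$ are automatically partitioned into blocks indexed by pairs $(j,j')$, where the $(j,j')$ block collects the rows associated with $\calE_{-j}$ (resp. $\calE_j$) and the columns associated with $\calE_{j'}$. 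The whole proof is then a matter of showing that off-diagonal blocks vanish and identifying the diagonal ones.

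First I would treat the PTDF submatrices. Starting from the identity $D = B C^T L^\dag C$ of Proposition~\ref{ch:lm; prop:PTDF.2}(i), restricting to the relevant rows and columns shows that the $(j,j')$ block of $D_{-\calE\calE}$ equals $B_{-j} C_{-j}^T L^\dag C_{j'}$ and that of $D_{\calE\calE}$ equals $B_j C_j^T L^\dag C_{j'}$. For the diagonal choice $j'=j$ these are precisely the matrices $D_{-j}$ and $D_j$ appearing in the statement. For every off-diagonal block $j'\neq j$, each entry is a PTDF $D_{\ell\hat\ell}$ with $\ell$ and $\hat\ell$ in different blocks, so Proposition~\ref{prop:zeroLODFPTDF}(ii) forces the whole block to be zero. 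This yields the block-diagonal form~\eqref{ch:cf; sec:localization; eq:K-FF.1a}.

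It then remains to transfer this structure to the LODF submatrix through the relation~\eqref{ch:cf; sec:lodf; eq:K_{-FF}D_{-FF}}, namely $K_{-\calE\calE} = D_{-\calE\calE}\,(I - \mathrm{diag}(D_{\calE\calE}))^{-1}$. Since $\mathrm{diag}(D_{\calE\calE})$ is a diagonal matrix, it is trivially block-diagonal with diagonal blocks $\mathrm{diag}(D_j)$, and so is $(I - \mathrm{diag}(D_{\calE\calE}))^{-1}$. The product of the block-diagonal matrix $D_{-\calE\calE}$ with this diagonal factor---both sharing the column/row block partition induced by the index set $\calE$---is again block-diagonal, with $j$-th block $D_{-j}(I - \mathrm{diag}(D_j))^{-1} = K_j$, which is exactly~\eqref{ch:cf; sec:localization; eq:K-FF.1c}. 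Equivalently, one may argue directly: off the diagonal blocks $K_{\ell\hat\ell}=0$ again by Proposition~\ref{prop:zeroLODFPTDF}(ii), while on each diagonal block the defining formula collapses to $D_{-j}(I-\mathrm{diag}(D_j))^{-1}$.

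I do not expect a genuine obstacle here, since the nontrivial content is wholly contained in the previously established Proposition~\ref{prop:zeroLODFPTDF}(ii); the argument is essentially a bookkeeping exercise in how the block partition of $E$ lifts to the rows and columns of the distribution-factor matrices. The only points that warrant care are verifying that the column block partition of $D_{-\calE\calE}$ matches the row block partition of $(I-\mathrm{diag}(D_{\calE\calE}))^{-1}$ so that the block multiplication is legitimate, and checking that restricting the global identity $D = B C^T L^\dag C$ to the chosen edge subsets indeed reproduces the stated expressions for $D_{-j}$ and $D_j$.
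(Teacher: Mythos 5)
Your proposal is correct and follows essentially the same route as the paper: both reduce the block-diagonal structure of $D_{-\calE\calE}$ and $D_{\calE\calE}$ to the vanishing result of Proposition~\ref{prop:zeroLODFPTDF}(ii) and then transfer it to $K_{-\calE\calE}$ through the identity $K_{-\calE\calE}=D_{-\calE\calE}\left(I-\mathrm{diag}(D_{\calE\calE})\right)^{-1}$, using that the diagonal factor respects the block partition. The only point the paper makes explicit that you take for granted is the invertibility of $I-\mathrm{diag}(D_j)$, which follows from Proposition~\ref{ch:lm; prop:PTDF.2}(ii) and Corollary~\ref{ch:lm; corollary:PTDF.3} since no line of a non-cut set $\calE$ is a bridge and hence $D_{\hat\ell\hat\ell}<1$ for every $\hat\ell\in\calE_j$.
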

\begin{proof}
Recall from \eqref{ch:cf; subset:glodf; eq:decomposeD.1} that $D_{-\calE\calE} = B_{-\calE}C_{-\calE}^T L^\dag C_\calE$ and $D_{\calE\calE} = B_\calE C_\calE^T L^\dag C_\calE$. Proposition~\ref{prop:zeroLODFPTDF} then implies that the PTDF submatrices $D_{-\calE\calE}$ and $D_{\calE\calE}$ decompose into a block-diagonal structure corresponding to the blocks of $G$, obtaining~\eqref{ch:cf; sec:localization; eq:K-FF.1a}.
Since $K_{-\calE\calE} = D_{-\calE\calE} ( I - \textrm{diag}(D_{\calE\calE}) )^{-1}$ in view of identity~\eqref{ch:cf; sec:lodf; eq:K_{-FF}D_{-FF}}, the LODF submatrix $K_{-\calE\calE}$ has the same block diagonal structure as $D_{-\calE\calE}$. The invertibility of $I - \mathrm{diag}(D_j) = \mathrm{diag}(1-D_{\hat\ell \hat\ell}, \ell \in \calE_j)$ follows from Proposition~\ref{ch:lm; prop:PTDF.2}(ii) and Corollary~\ref{ch:lm; corollary:PTDF.3}, since every line $\hat\ell \in \calE_j \subset \calE$ is not a bridge and thus $D_{\ell\ell}<1$.
\end{proof}

The block decomposition of the network $G$ is crucial for failure localization also when a non-cut set $\calE$ of lines are disconnected simultaneously. More specifically, the next result shows that even in this scenario the impacts of line outages are still contained in their own blocks.
\begin{prop}[GLODF characterization for non-cut set outage]
\label{prop:glodf_non_cut}
Let $\calE$ be non-cut set outage for the power network $G=(V,E,\bm{b})$. Then, the rectangular $|-\calE| \times |\calE|$ GLODF matrix $K^\calE$ has a block-diagonal structure, i.e.,
\begin{subequations}
\begin{equation}
K^\calE  =:  \begin{bmatrix} 
	K^\calE_1 & 0 & \dots & 0  \\  0 & K^\calE_2 & \dots & 0 \\ \vdots & \vdots & \ddots & \vdots \\ 0 & 0 & \dots & K^\calE_k
	\end{bmatrix},
\label{ch:cf; sec:localization; eq:K^E.1a}
\end{equation}
where each $K^\calE_j$ is $|\calE_{-j}|\times |\calE_j|$ and involves lines only in block $E_j$ of $G$, given by:
\begin{equation}
K^\calE_j  :=  B_{-j} C_{-j}^T L^\dag C_j \left( I-B_j C_j^T L^\dag C_j \right)^{-1}, \qquad j = 1, \dots, k,
\label{ch:cf; sec:localization; eq:K^E.1c}
\end{equation}
where the submatrices $B_j, B_{-j}, C_j$ and $C_{-j}$ have been obtained by partitioning the matrices $B$ and $C$ according to the block structures of both $-\calE, \calE$. Equivalently, we have
\begin{equation}
K^\calE_j  =  D_{-j} (I - D_j)^{-1} = K_{j} \left(I - \textup{diag}(D_j) \right) \left( I-D_j \right)^{-1}, \qquad j = 1, \dots, k.
\label{ch:cf; sec:localization; eq:K^E.1b}%
\end{equation}%
\label{ch:cf; sec:localization; eq:K^E.1}%
\end{subequations}%
where $D_j, D_{-j}, K_j$, $j=1,\dots,k$, are the submatrices defined in Proposition~\ref{prop:blockDF}.
Furthermore, for every $\ell \in -\calE$ and $\hat\ell \in \calE$, if $\ell$ and $\hat\ell$ are in different blocks of $G$, then the GLODF $K^\calE_{\ell\hat \ell} =0$.
\end{prop}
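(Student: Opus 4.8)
The plan is to combine the closed-form expression $K^\calE = D_{-\calE\calE}(I - D_{\calE\calE})^{-1}$ for the GLODF matrix from Theorem~\ref{ch:cf; thm:LODF.3} with the block-diagonal decomposition of the PTDF submatrices already established in Proposition~\ref{prop:blockDF}. Since $\calE$ is a non-cut set outage, Theorem~\ref{ch:cf; thm:LODF.3} guarantees that $I - D_{\calE\calE}$ is invertible, while Proposition~\ref{prop:blockDF} tells us that $D_{\calE\calE} = \textup{diag}(D_1, \dots, D_k)$ and $D_{-\calE\calE} = \textup{diag}(D_{-1}, \dots, D_{-k})$ are block-diagonal with respect to the block decomposition $E = E_1 \cup \dots \cup E_k$. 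The first step is to observe that $I - D_{\calE\calE} = \textup{diag}(I - D_1, \dots, I - D_k)$ is block-diagonal; since a block-diagonal matrix is invertible exactly when each diagonal block is, each $I - D_j$ is invertible and
\[
    (I - D_{\calE\calE})^{-1} = \textup{diag}\big((I-D_1)^{-1}, \dots, (I-D_k)^{-1}\big).
\]

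Next I would multiply this by the block-diagonal matrix $D_{-\calE\calE}$. As the product of two block-diagonal matrices with matching block partitions is again block-diagonal, this immediately yields the structure~\eqref{ch:cf; sec:localization; eq:K^E.1a} with diagonal blocks $K^\calE_j = D_{-j}(I - D_j)^{-1}$. Substituting the explicit expressions $D_{-j} = B_{-j} C_{-j}^T L^\dag C_j$ and $D_j = B_j C_j^T L^\dag C_j$ from Proposition~\ref{prop:blockDF} gives the spectral form~\eqref{ch:cf; sec:localization; eq:K^E.1c}. To obtain the equivalent expression~\eqref{ch:cf; sec:localization; eq:K^E.1b}, I would invoke the relation $K_j = D_{-j}(I - \textup{diag}(D_j))^{-1}$ from Proposition~\ref{prop:blockDF}, rearranged as $D_{-j} = K_j(I - \textup{diag}(D_j))$, and substitute it into $K^\calE_j = D_{-j}(I - D_j)^{-1}$.

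Finally, the vanishing of the off-diagonal GLODF entries is a direct consequence of the block-diagonal structure: if $\ell \in -\calE$ and $\hat\ell \in \calE$ lie in distinct blocks, say $\ell \in E_a$ and $\hat\ell \in E_b$ with $a \neq b$, then the entry $K^\calE_{\ell\hat\ell}$ occupies an off-diagonal position in the decomposition~\eqref{ch:cf; sec:localization; eq:K^E.1a} and is therefore zero. The argument is essentially bookkeeping with block-diagonal products and inverses, so I do not expect any real obstacle; the substantive work --- the block-diagonal structure of $D_{-\calE\calE}$ and $D_{\calE\calE}$, which rests on the Simple Cycle Criterion via Proposition~\ref{prop:zeroLODFPTDF} --- has already been carried out in Proposition~\ref{prop:blockDF}. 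The only point needing a moment's care is that global invertibility of $I - D_{\calE\calE}$ forces each diagonal block $I - D_j$ to be invertible, which follows from the multiplicativity of the determinant over a block-diagonal matrix.
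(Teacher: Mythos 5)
Your proposal is correct and follows essentially the same route as the paper's own proof, which likewise obtains the result by substituting the block-diagonal PTDF submatrices from Proposition~\ref{prop:blockDF} into the GLODF formula of Theorem~\ref{ch:cf; thm:LODF.3} and reading off the block structure. You merely spell out the block-diagonal inversion and the invertibility of each $I - D_j$ in more detail than the paper does, which is fine.
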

\begin{proof}
The proof of~\eqref{ch:cf; sec:localization; eq:K^E.1} readily follows from Theorem~\ref{ch:cf; thm:LODF.3} after rewriting all the various submatrices resulting from the block decomposition and using Proposition~\ref{prop:blockDF}. The last statement is an immediate consequence of~\eqref{ch:cf; sec:localization; eq:K^E.1}.
\end{proof}


We remark that, differently for the case of a single non-bridge failure in Proposition~\ref{prop:zeroLODFPTDF}, the PTDFs and GLODFs are not always simultaneously zero. Specifically, when multiple lines fail simultaneously, the fact that GLODF $K_{\ell \hat\ell}^\calE = 0$ does not imply that the PTDF $D_{\ell\hat\ell}=0$ or vice versa. Consider the counterexample in Fig.~\ref{fig:counterexample} where two dashed lines $\ell_1$ and $\ell_2$ fail simultaneously, i.e., $\calE = \{\ell_1, \ell_2\}$. Assuming all the lines have unit susceptances, the resulting absolute values of PTDF and GLODF are computed as in Table.~\ref{table:counterexample}. In particular, $K_{\ell_6,\ell_1}^\calE=K_{\ell_3,\ell_2}^\calE = K_{\ell_4,\ell_2}^\calE = 0 $, while the corresponding PTDFs are non-zero. We remark that such counterexamples may happen when the outage set $\calE$ is such that the block decomposition of the surviving network has more blocks than the original network (in our example, they are $4$ and $1$, respectively). 

\begin{table}[!ht]
  \begin{minipage}[b]{0.5\textwidth}
    \centering
    \includegraphics[width=0.6\textwidth]{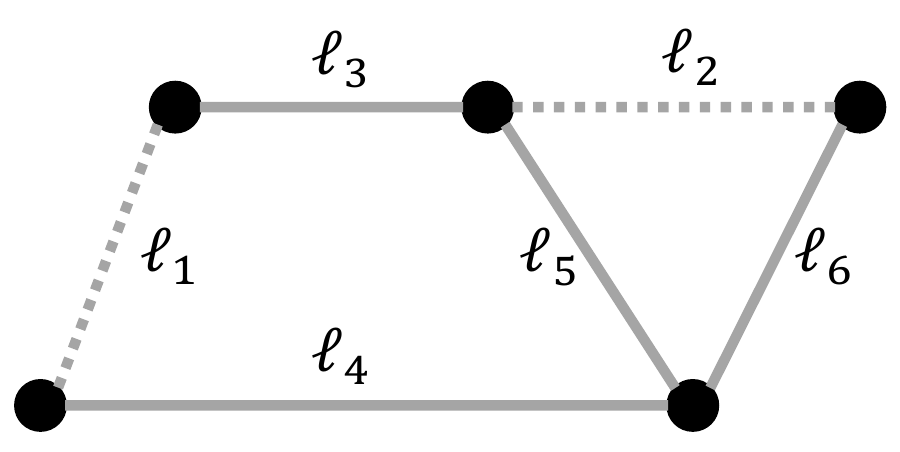}
    \captionof{figure}{Counterexample with the lines in the non-cut\\outage set $\calE = \{\ell_1, \ell_2\}$ displayed as dashed.}
    \label{fig:counterexample}
  \end{minipage}
  \hfill
  \begin{minipage}[b]{0.5\linewidth}
    \centering
    \begin{tabular}{c|c|c|c|c}
      \toprule
      \diagbox{$\hat\ell$}{$\ell$}& $\ell_3$ & $\ell_4$ & $\ell_5$ & $\ell_6$ \\
      \hline \hline
      $\ell_1$ & 3/11, 1 & 3/11, 1 & 2/11, 1& 1/11, 0 \\
      $\ell_2$ & 1/11, 0 & 1/11, 0 & 3/11, 1 & 4/11, 1 \\
      \bottomrule
    \end{tabular}
    \caption{The distribution factors for all the lines reported as pairs (PTDF $D_{\ell,\hat\ell}$, GLODF $K^\calE_{\ell,\hat\ell}$).}
    \label{table:counterexample}
  \end{minipage}
\end{table}
\FloatBarrier

Nonetheless, the following proposition lists sufficient conditions for both implications to hold.
\begin{prop}[Zero GLODF and PTDF characterization]
\label{prop:glodf_ptdf}
Consider a non-cut set outage $\calE$ and a surviving line $\ell=(i,j)\notin \calE$. The following statements hold:
\begin{itemize}
    \item[(i)] If the PTDF $D_{\ell \hat\ell} = 0$ holds for any set of susceptances $B$, then the GLODF $K_{\ell\hat\ell}^\calE = 0$.
    \item[(ii)] If the GLODF $K_{\ell\hat \ell}^\calE=0$, then either the PTDF $D_{\ell\hat\ell}=0$ or every cycle in pre-contingency network $G$ that containing $\ell$ and $\hat \ell$ pass through at least one outage line different from $\hat \ell$.
\end{itemize}
\end{prop}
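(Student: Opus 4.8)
The common engine for both parts is the interplay between the spanning-forest formula \eqref{eq:Dlst} for the PTDFs and the block-diagonal structure of the GLODF matrix from Proposition~\ref{prop:glodf_non_cut}; throughout write $\hat\ell=(s,t)$ and $\ell=(i,j)$.

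The plan for part (i) is to turn the hypothesis ``$D_{\ell\hat\ell}=0$ for every $B$'' into a purely topological statement and then read off the conclusion from the block structure. By \eqref{eq:Dlst}, $D_{\ell\hat\ell}$ equals $b_\ell$ times the polynomial $N_{\ell\hat\ell}(\bm b):=\sum_{F\in T(\{i,s\},\{j,t\})}\beta(F)-\sum_{F\in T(\{i,t\},\{j,s\})}\beta(F)$ divided by the strictly positive spanning-tree polynomial $\sum_{F\in T}\beta(F)$. Vanishing of $D_{\ell\hat\ell}$ for all susceptances forces $N_{\ell\hat\ell}\equiv0$, i.e. the two families of spanning $2$-forests must cancel monomial by monomial. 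I would show this happens exactly when no simple cycle of $G$ contains both $\ell$ and $\hat\ell$: the Simple Cycle Criterion (Theorem~\ref{thm:zeroPTDF}) gives one direction, and for the converse I would take a simple cycle $C$ through $\ell$ and $\hat\ell$, delete $\ell$ and $\hat\ell$ to obtain two arcs, and extend them to a spanning $2$-forest whose monomial appears in exactly one of the two families of $N_{\ell\hat\ell}$ and therefore cannot cancel. Since every $\hat\ell\in\calE$ is a non-bridge (a non-cut set contains no bridge), ``no common simple cycle'' is the same as ``$\ell$ and $\hat\ell$ lie in different blocks of $G$'', and the last assertion of Proposition~\ref{prop:glodf_non_cut} then yields $K^\calE_{\ell\hat\ell}=0$.

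For part (ii) I would argue the contrapositive: assuming $D_{\ell\hat\ell}\neq0$ and that some simple cycle $C$ through $\ell$ and $\hat\ell$ avoids every outage line other than $\hat\ell$, I want $K^\calE_{\ell\hat\ell}\neq0$. First, $D_{\ell\hat\ell}\neq0$ places $\ell$ and $\hat\ell$ in a common block of $G$ (Proposition~\ref{prop:zeroLODFPTDF}(ii)), so by the block-diagonal structure of Proposition~\ref{prop:glodf_non_cut} it suffices to work inside that block, with outage set $\calE_a=\calE\cap E_a$ satisfying $C\cap\calE_a=\{\hat\ell\}$. The useful reformulation of the cycle hypothesis is that $\ell$ and $\hat\ell$ lie in a common block of the auxiliary network $G^{\calE\setminus\{\hat\ell\}}$ obtained by restoring only $\hat\ell$ to the surviving network: a cycle through $\ell,\hat\ell$ meeting $\calE$ only in $\hat\ell$ is precisely a cycle of that network, and deleting $\hat\ell$ from $C$ leaves an $s$--$t$ path through $\ell$ inside the connected surviving network $G^\calE$. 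I would then write $K^\calE_{\ell\hat\ell}=[D_{-\calE\calE}(I-D_{\calE\calE})^{-1}]_{\ell\hat\ell}$ as a single ratio of determinants via Cramer's rule, whose denominator $\det(I-D_{\calE\calE})$ is nonzero by Theorem~\ref{ch:cf; thm:LODF.3} and, being proportional to the spanning-tree polynomial of $G^\calE$, of a definite sign, and whose numerator, after clearing denominators through \eqref{eq:Dlst} and the all-minors matrix-tree theorem, is a signed sum over spanning forests of $G^\calE$ reconnected through the endpoints of the outaged lines; the surviving $s$--$t$ path coming from $C$ contributes a distinguished term there.

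The step I expect to be the crux is ruling out cancellation in that numerator for the given $\bm b$. This is genuinely necessary: the example of Figure~\ref{fig:counterexample} shows that $K^\calE_{\ell\hat\ell}=0$ can coexist with $D_{\ell\hat\ell}\neq0$, and the disjunctive phrasing of the statement records exactly that $D_{\ell\hat\ell}=0$ is the only escape once a good cycle is present. Unlike the single-line identity $K_{\ell\hat\ell}=D_{\ell\hat\ell}/(1-D_{\hat\ell\hat\ell})$, where the positive denominator makes the zero patterns of $K$ and $D$ coincide automatically, here $(I-D_{\calE\calE})^{-1}$ is not sign-definite, so I cannot simply read off non-vanishing. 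My plan to close the gap is to show that the presence of a cycle avoiding the other outage lines makes $D_{\ell\hat\ell}$ factor the numerator on the locus $D_{\ell\hat\ell}\neq0$; concretely I would track, via a sign-reversing-involution argument on the Neumann expansion $\sum_{r\ge0}\sum_{\hat\ell_1,\dots,\hat\ell_r\in\calE}D_{\ell\hat\ell_1}\cdots D_{\hat\ell_r\hat\ell}$, that every walk using an outage line other than $\hat\ell$ either cancels in pairs or reinforces the direct term $D_{\ell\hat\ell}$ supplied by the good cycle. Making this bookkeeping exact for the fixed $\bm b$, rather than merely generic in $\bm b$, is the main difficulty.
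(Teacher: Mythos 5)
The paper itself supplies no proof of Proposition~\ref{prop:glodf_ptdf} (there is nothing in the main text or the appendix), so your attempt can only be judged on its own merits. Part (i) of your argument is correct and complete: vanishing of $D_{\ell\hat\ell}$ for all susceptances kills the forest polynomial, the two forest families in \eqref{eq:Dlst} can never contain the same edge set so a common simple cycle produces a surviving monomial, and for a non-bridge $\hat\ell$ the absence of a common simple cycle is exactly membership in different blocks, after which the last assertion of Proposition~\ref{prop:glodf_non_cut} finishes the job.

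Part (ii) has a genuine gap, which you correctly flag but do not close, and I do not believe it can be closed in the form you are attempting. The structural reason is visible from a Schur-complement computation on \eqref{ch:cf; eq:GLODF.1}: writing $\calE':=\calE\setminus\{\hat\ell\}$ and $D'$ for the PTDF matrix of the intermediate network $G^{\calE'}$, one finds
\[
K^\calE_{\ell\hat\ell}=\frac{D'_{\ell\hat\ell}}{1-D'_{\hat\ell\hat\ell}},
\qquad\text{since}\qquad
D'_{\ell\hat\ell}=D_{\ell\hat\ell}+\sum_{\hat e\in\calE'}K^{\calE'}_{\ell\hat e}D_{\hat e\hat\ell},
\]
i.e., the $\hat\ell$-column of $K^\calE$ is the single-line LODF of $\hat\ell$ computed in $G^{\calE'}$; as $\calE$ is a non-cut set, $\hat\ell$ is not a bridge of $G^{\calE'}$, so $K^\calE_{\ell\hat\ell}=0$ if and only if $D'_{\ell\hat\ell}=0$. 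Your contrapositive therefore demands that a cycle of $G^{\calE'}$ through $\ell$ and $\hat\ell$, together with $D_{\ell\hat\ell}\neq 0$, force $D'_{\ell\hat\ell}\neq 0$. But the correction term above can cancel $D_{\ell\hat\ell}$ exactly for the given susceptances. Concretely, let $G^{\calE'}$ be $K_4$ with unit susceptances, $\hat\ell=(1,2)$, $\ell=(3,4)$: by symmetry $D'_{\ell\hat\ell}=0$ even though the $4$-cycle on nodes $1,2,3,4$ contains both lines. Now let $G$ be $K_4$ with an extra node $5$ and lines $(1,5),(5,3)$, and take $\calE'=\{(1,5)\}$. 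Kron-reducing node $5$ shows $D_{\ell\hat\ell}>0$ in $G$ (the extra $1$--$3$ connection breaks the symmetry), yet $K^\calE_{\ell\hat\ell}=0$ and the $4$-cycle avoids $(1,5)$, so both disjuncts of (ii) fail. Hence no sign-reversing involution on the Neumann series can succeed for a fixed $\bm b$; the statement is only provable if, as in part (i), the hypothesis is read as holding for every choice of susceptances (in which case $D'_{\ell\hat\ell}\equiv 0$ and the Simple Cycle Criterion applied to $G^{\calE'}$ delivers the second disjunct immediately, with none of your cancellation bookkeeping needed), or if the first disjunct is replaced by $D'_{\ell\hat\ell}=0$. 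I would recommend reorganizing your part (ii) around the displayed reduction and stating explicitly which quantifier you are assuming.
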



\subsubsection{Visualizing LODFs using influence graphs}
\label{sub:influence}

The analysis of the LODFs corroborates a well-known fact in power systems, namely how after a particular line outage the next line to fail may be very distant topologically and geographically, as illustrated by~\cite{Dobson2016}. Classical contagion model in which failures spread locally from a node to its immediate neighbors are thus not appropriate to describe cascading failures on power networks (also due to the fact that line failures are an order of magnitude more likely than node outages).

A better way to capture the interdependencies between (single) line failures in power networks is by means of a specific type of ``dual graph'', usually referred to as~\textit{(failure) influence graph}~\cite{Hines2013,Hines2017} or \textit{interaction graph}~\cite{Nakarmi2018,Nakarmi2019}. There are many possible ways to define such a graph, but the high-level idea is that an influence graph $\mathcal \calL(G)$ is derived from the original power network $G$ as follows: the vertex set of $\mathcal \calL(G)$ is in one-to-one correspondence with the set $E$ of the lines of the power network $G$ and the pair of vertices corresponding to the lines $\ell, \hat\ell$ are connected in $\mathcal \calL(G)$ if and only if the failure of line $\hat\ell$ has a severe impact and/or causes and/or is correlated the failure of line $\ell$. 
The edges (or the edge weights) of the influence graph $\mathcal \calL(G)$ can thus be inferred by historical data or by simulating cascades. In this paper, we consider a version of the influence graph defined using LODFs very similar to the one in~\cite{Panciatici2018}, which offers a powerful way to visualize the potential risk of failure propagation. More specifically, the vertices corresponding to the lines $\ell, \hat\ell$ are connected in $\mathcal \calL(G)$ if and only if LODF is larger than a desired threshold, i.e., $|K_{\ell \hat\ell}| > K_\mathrm{min}$. 

\begin{figure}[!ht]
\centering
\subfloat[][The influence graph (in gray) obtained using the threshold $K_\mathrm{min}=0.005$. Due to the presence of a block, the influence graph has two clearly visible disconnected components, since all the corresponding LODFs are zero.]{\makebox[\width]{\includegraphics[width=.95\textwidth]{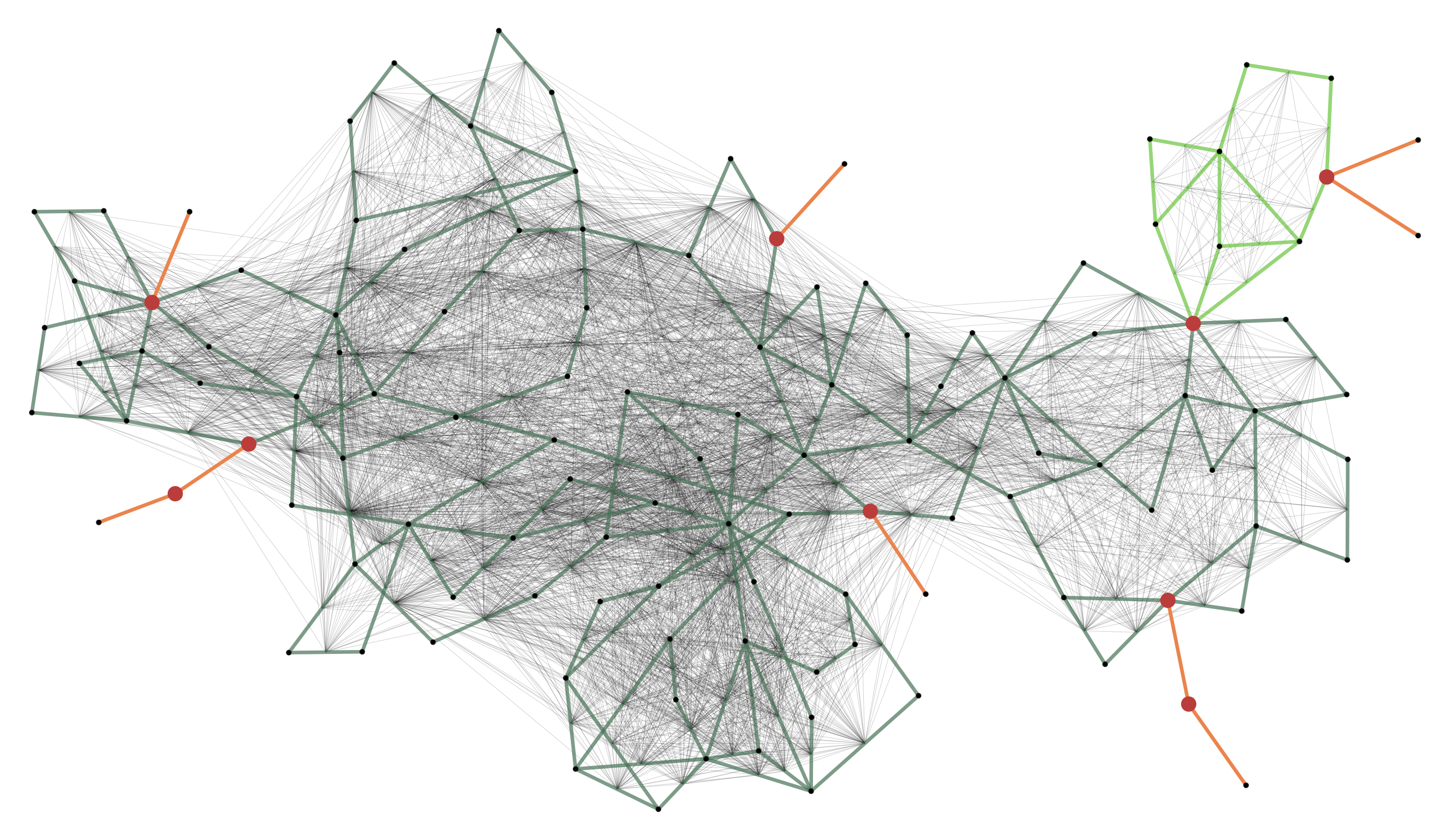}}}\\
\subfloat[][Schematic representation of the block and bridge-block decompositions of the original IEEE 118-node test network. The nontrivial bridge-block is further decomposed by a cut vertex into two blocks (in two different shades of green)]{\makebox[1.9\width]{\includegraphics[width=.5\textwidth]{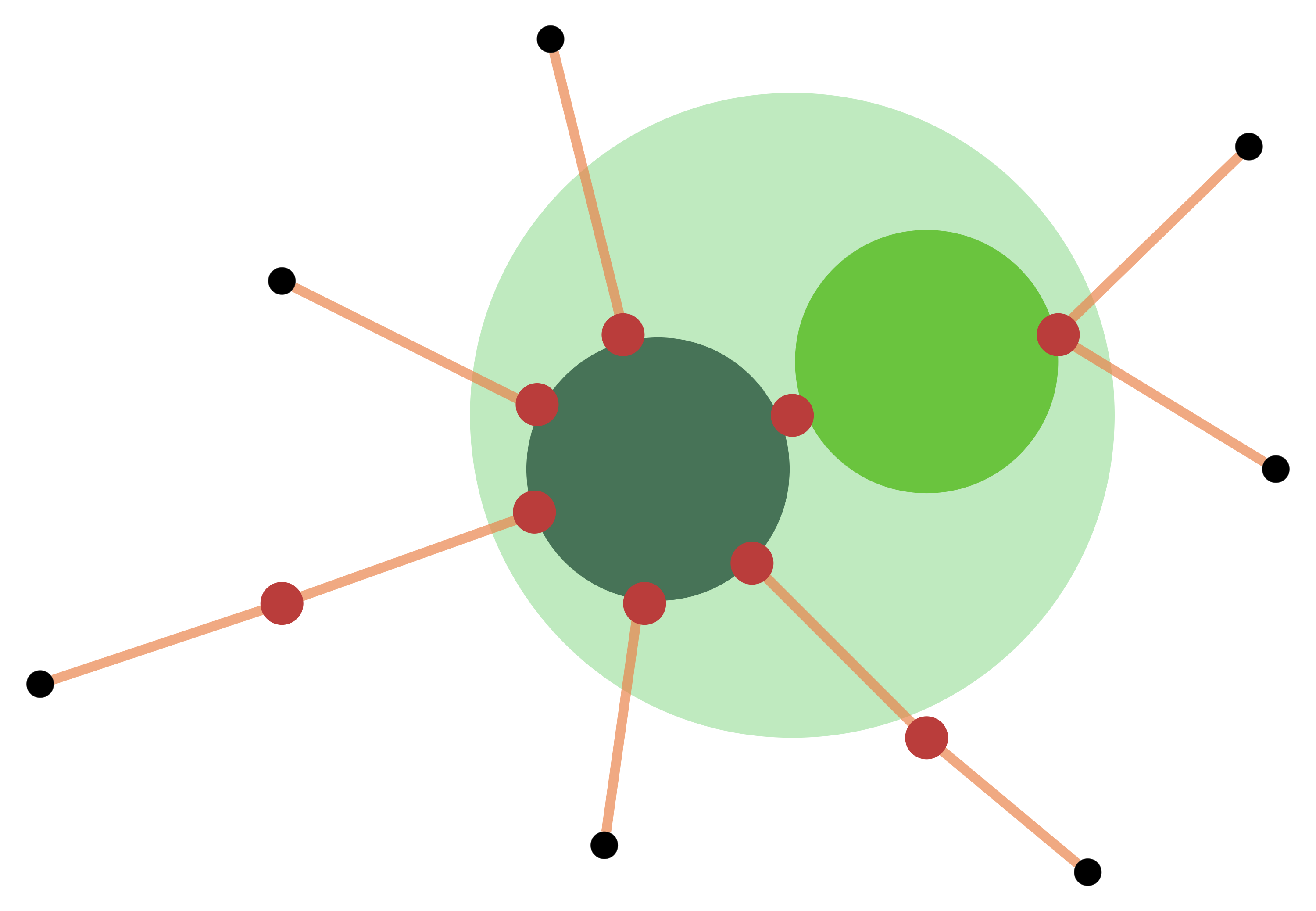}}}
\caption{Original IEEE 118-node test network. It has 9 bridges (in orange), one nontrivial bridge-block (in green), and 9 trivial bridge-blocks. Furthermore, it has 9 cut vertices (in red), one of which decomposes the nontrivial bridge-block into 2 blocks (in different shades of green).} \label{fig:influencegraph_pre}
\end{figure}

As illustrated for the IEEE 118-node test network in Fig.~\ref{fig:influencegraph_pre}(a), the influence graph of power network tend to be very dense and connecting many lines that are topologically far away, corroborating the non-local propagation of line failures.
Fig.~\ref{fig:influencegraph_pre}(b) shows the block and bridge-block decomposition of the same network. This network has a trivial bridge-block decomposition, consisting of a single nontrivial bridge-block and $9$ trivial ones. This bridge-block consists of two nontrivial blocks, separated by a cut vertex. As predicted by the theory (cf. Proposition~\ref{prop:zeroLODFPTDF}), the influence graph is not connected, since all the LODFs corresponding to pairs of lines in the two blocks are identically zero. 

By removing three lines from IEEE 118-node test network, we can make both the block and bridge-block decompositions of this network finer, as illustrated in Fig.~\ref{fig:influencegraph_post}. The influence graph of the pruned network is now less dense and have more disconnected components, showing a higher localization capability.
Fig.~\ref{fig:influencegraph_post}(b) shows the finer block and bridge-block decomposition of the pruned network. It has two nontrivial bridge-blocks, one of which further decomposes into four nontrivial blocks, separated by cut vertices.

\begin{figure}[!ht]
\centering
\subfloat[][The influence graph (in gray) of the pruned IEEE 118-node test network obtained using the threshold $K_\mathrm{min}=0.005$. Due to the presence of four blocks (in blue and three shades of green), the influence graph has four clearly visible disconnected components, since all the corresponding LODFs are zero.]{\makebox[\width]{\includegraphics[width=.95\textwidth]{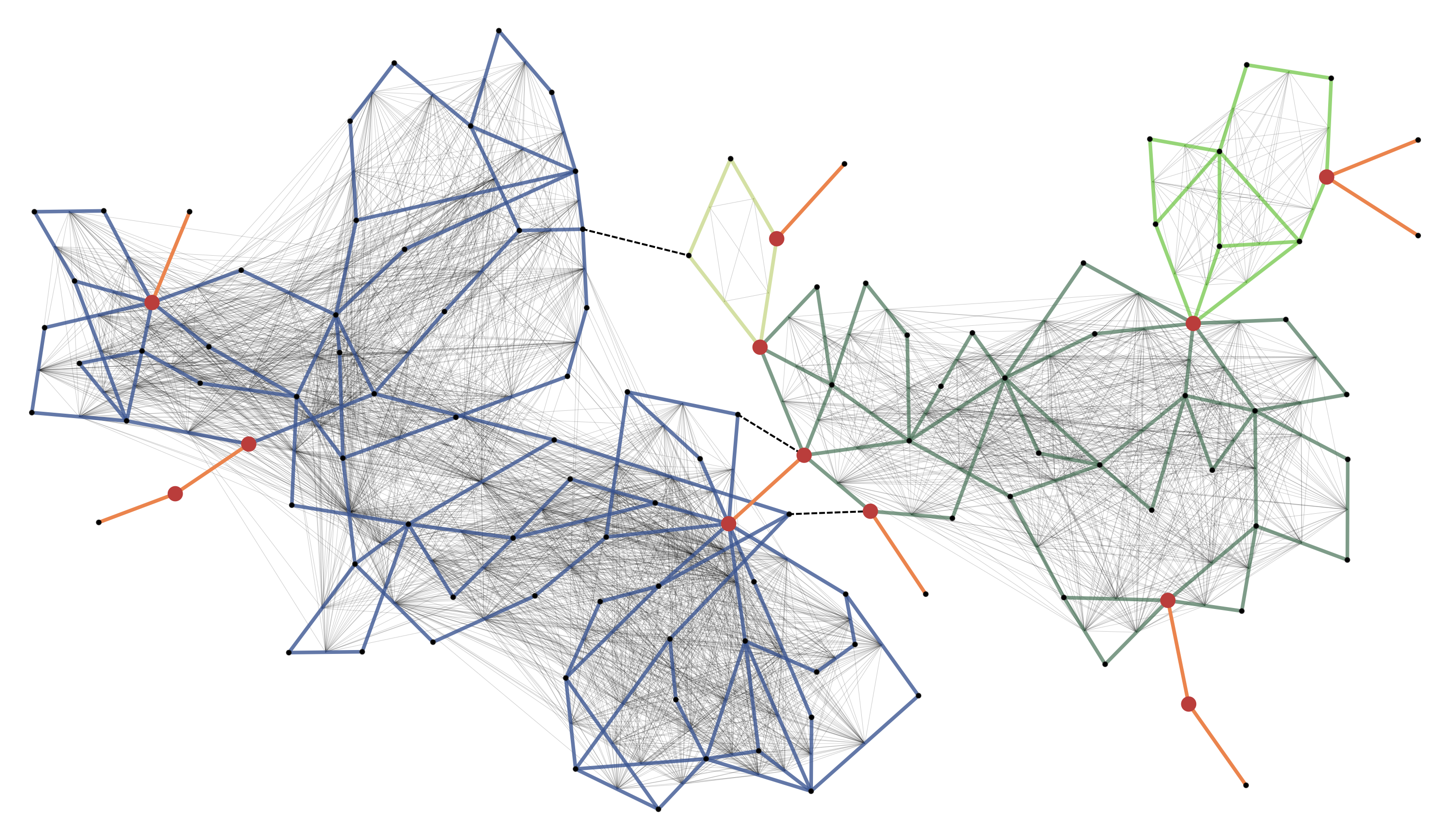}}}\\
\subfloat[][Schematic representation of the block and bridge-block decomposition of the pruned IEEE 118-node test network. The green bridge-block further is further decomposed by cut vertices into three blocks (in different shades of green). ]{\makebox[1.9\width]{\includegraphics[width=.5\textwidth]{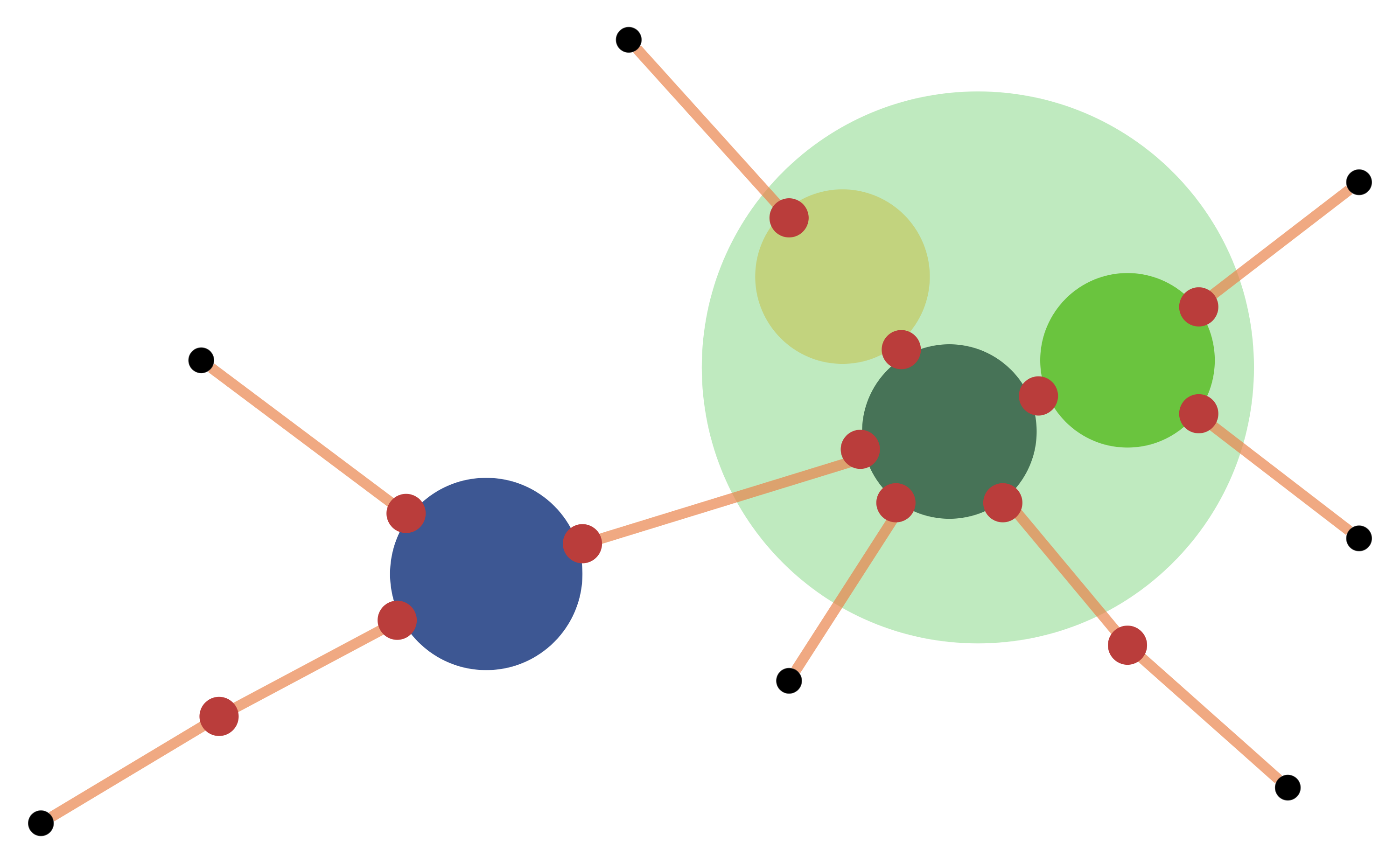}}}
\caption{The network obtained from the IEEE 118-node test network removing three lines (shown as dashed black edges). It has 10 bridges (in orange), two nontrivial bridge-block (in blue and green), and 9 trivial bridge-blocks. Furthermore, it has 13 cut vertices (in red), two of which decomposes the nontrivial green bridge-block into 3 blocks (in three different shades of green).} \label{fig:influencegraph_post}
\end{figure}
\FloatBarrier


\subsection{Cut set outages: Power rebalance and redistribution}




We now discuss the impact of cut set outages. As mentioned in Subsection~\ref{sub:cut}, we assume the post-contingency power balancing rule is decoupled for each island so that we can study the power redistribution in each of them separately. In particular, we consider the proportional control $\mathbb G_{\alpha}$ that the post-contingency power injection is adjusted proportionally to compensate the power imbalance caused by the cut set outages.

Consider an island $\calI$ and let $E_\calI:=\set{(i,j)\in E ~:~ i\in\calI \text{ and } j\in\calI }$ denote the set of edges whose both end points belong to $\calI$ before contingency. As outlined in Subsection~\ref{sub:cut}, we partition the failed lines into tie line failures $\calE_\textrm{tie}$ and internal line failures $\calE_\textrm{int}$.
Rewriting~\eqref{ch:cf; subsec:bridgeoutage; eq:DeltaP.1} for a specific surviving line $\ell \in E_\calI \setminus \calE_\textrm{int}$ in island $\calI$ yields
\begin{equation}
\Delta f_{\ell} = 
	\sum_{\hat \ell\in \calE_\textup{int}} K^{\calE_\textup{int}}_{\ell\hat \ell} \, f_{\hat \ell}   \ \ + \ 
	\sum_{\hat \ell\in \calE_\textup{tie}} f_{\hat \ell}  \sum_{k: \alpha_k>0} \alpha_k
		\Big( D_{\ell, k j(\hat \ell)} + \sum_{\hat \ell'\in \calE_\textup{int}} K^{\calE_\textup{int}}_{\ell\hat \ell'} D_{\hat \ell', k j(\hat \ell)} \Big).
\label{ch:cf; subsec:bridgeoutage; eq:DeltaP.l}
\end{equation}

It is observed that the impact of cut set outages consists of two terms, one capturing the impact of the internal line failures in $\calI$ and the other capturing the impact of the tie line failures, as shown in~\eqref{ch:cf; subsec:bridgeoutage; eq:DeltaP.l}. In order to describe the impact boundary of such cut set outages, we will study the conditions so that \eqref{ch:cf; subsec:bridgeoutage; eq:DeltaP.l} is zero. 

Decompose the pre-contingency subnetwork corresponding to the island $\calI$ into blocks. For any surviving line $\ell$ and failed internal line $\hat \ell \in \calE_\textrm{int}$ pair, Proposition~\ref{prop:glodf_non_cut} indicates that $K_{\ell \hat \ell}^{\calE_\textrm{int}} = 0$ if $\ell$ and $\hat\ell$ are in different blocks of $\calG_\calI$. Therefore, if a surviving line $\ell$ is not in the same block as any outaged internal lines, \eqref{ch:cf; subsec:bridgeoutage; eq:DeltaP.l} simplifies to \[
\Delta f_\ell = \sum_{\hat \ell\in \calE_\textup{tie}} f_{\hat \ell}  \sum_{k: \alpha_k>0} \alpha_k
		 D_{\ell, k j(\hat \ell)} 
\]
This suggests that the impact of multiple outaged tie lines is separable. 
In the following lemma we state the \textit{Simple Path Criterion}, cf.~\cite[Theorem 1]{TPS2}, which provides a sufficient condition for the PTDF introduced in Subsection~\ref{sub:PTDF} to be zero in purely topological terms.
\begin{lem}[Simple Path Criterion] \label{lem:simple_path}
For any line $\ell\in E$ and two nodes $j,k\in V$, if $\ell$ there is no simple path in the power network $G=(V,E,\bm{b})$ connecting nodes $j$ and $k$ that contains line $\ell$, then the PTDF $D_{\ell,jk}=0$.
\end{lem}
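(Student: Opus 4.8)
The plan is to derive the result directly from the spanning-forest representation of the PTDF in Proposition~\ref{ch:lm; prop:PTDF.1}, by showing that the stated hypothesis forces the numerator of that expression to vanish. Write the endpoints of the line as $\ell=(a,b)$ (using $a,b$ to avoid clashing with the node names $j,k$). Proposition~\ref{ch:lm; prop:PTDF.1} then gives
\[
    D_{\ell, jk} = \frac{b_\ell}{\sum_{\calF\in T}\beta(\calF)}\left(\sum_{\calF\in T(\{a,j\},\{b,k\})}\beta(\calF) \; - \; \sum_{\calF\in T(\{a,k\},\{b,j\})}\beta(\calF)\right).
\]
Since $G$ is assumed connected, the denominator $\sum_{\calF\in T}\beta(\calF)$ is strictly positive, so it suffices to show that \emph{both} spanning-forest collections $T(\{a,j\},\{b,k\})$ and $T(\{a,k\},\{b,j\})$ are empty whenever no simple path of $G$ connecting $j$ and $k$ contains $\ell$; the two numerator sums are then empty and $D_{\ell,jk}=0$ follows at once.

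The core step I would establish is an emptiness claim for each collection. Given any $\calF\in T(\{a,j\},\{b,k\})$, recall that by definition $\calF$ is a spanning forest consisting of exactly two disjoint trees, one tree $T_1$ containing $a$ and $j$ and the other tree $T_2$ containing $b$ and $k$. Because $a\in T_1$ and $b\in T_2$ lie in different trees, the edge $\ell=(a,b)$ does not belong to $\calF$ and joins the two components, so $\calF\cup\{\ell\}$ is a spanning tree of $G$ containing $\ell$. In this spanning tree $\ell$ is a cut edge: deleting it recovers the partition into $T_1$ and $T_2$, and since $j\in T_1$ and $k\in T_2$ end up in different components, the unique $j$--$k$ path in $\calF\cup\{\ell\}$ must traverse $\ell$. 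That path is a simple path of $G$ connecting $j$ and $k$ and containing $\ell$, contradicting the hypothesis; hence $T(\{a,j\},\{b,k\})=\emptyset$. Applying the identical argument to any $\calF'\in T(\{a,k\},\{b,j\})$ --- where now $j$ sits in the tree with $b$ and $k$ in the tree with $a$, so that $j$ and $k$ are again separated by $\ell$ in $\calF'\cup\{\ell\}$ --- yields $T(\{a,k\},\{b,j\})=\emptyset$.

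With both collections empty, the numerator in the displayed formula is $0-0=0$, so $D_{\ell,jk}=0$, as desired. Two degenerate cases can be dispatched up front: if $j=k$ the PTDF is zero by the convention fixed just after the definition of $D_{\ell,\hi\hj}$, and if $\{j,k\}=\{a,b\}$ the hypothesis is vacuously false since the single-edge path through $\ell$ is itself a simple $j$--$k$ path containing $\ell$. I expect the only real obstacle to be the core observation of the second paragraph, namely verifying cleanly that appending $\ell$ to a two-tree forest of the prescribed type always produces a spanning tree in which $\ell$ separates $j$ from $k$; once this is phrased via $\ell$ being a cut edge of $\calF\cup\{\ell\}$, the contradiction with the no-simple-path assumption is immediate. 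The overall structure parallels that of the Simple Cycle Criterion (Lemma~\ref{lem:simple_cycle}), with simple paths here playing the role that simple cycles play there.
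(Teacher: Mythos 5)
Your proof is correct: the emptiness argument for $T(\{a,j\},\{b,k\})$ and $T(\{a,k\},\{b,j\})$ is sound (in each case $j$ and $k$ lie in different trees of the forest, so the unique $j$--$k$ path in the spanning tree $\calF\cup\{\ell\}$ must traverse $\ell$, contradicting the hypothesis), and the degenerate cases are handled properly. The paper itself only cites \cite[Theorem 1]{TPS2} for this lemma, but your argument is exactly the path analogue of the paper's own appendix proof of the Simple Cycle Criterion for PTDF (Theorem~\ref{thm:zeroPTDF}), i.e., essentially the same approach.
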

Leveraging this criterion and the notion of participating bus introduced in Subsection~\ref{sub:cut}, we can derive a sufficient condition, summarized in the following proposition~\cite[Corollary 6]{TPS2}, for a surviving line $\ell$ not to be impacted by a cut set outage. 


\begin{prop}[Characterization for cut set failures]
\label{prop:glodf_cut}
Consider a cut set $\calE$ outage that disconnects the power network into two or more islands with decoupled power balancing rules. Let $\calI$ denote one such island under the proportional control $\mathbb G_{\alpha}$, and let $\calE_\calI = \calE_\textrm{tie} \cup \calE_\textrm{int}$ be the set of outaged lines with one or both endpoints in this island. For any surviving line $\ell$, if $\ell$ is not in the same block as any outaged internal lines $\hat \ell \in \calE_\textrm{int}$ and $\ell$ is not on any simple path connecting the endpoint of an outaged tie line $j(\hat \ell)$ and a participating bus $k$ with $\alpha_k > 0$ for any $\hat \ell \in \calE_\textrm{tie}$, then the post-contingency flow on line $\ell$ remains unchanged.
\end{prop}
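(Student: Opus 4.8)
The plan is to start from the explicit decomposition~\eqref{ch:cf; subsec:bridgeoutage; eq:DeltaP.l} of the flow change $\Delta f_\ell$ and to show that each of its two constituent terms vanishes separately under the two hypotheses. Recall that, since the island $\calI$ is connected, the set $\calE_\textrm{int}$ of outaged internal lines is a non-cut set of the pre-contingency subnetwork $G_\calI$ induced by $\calI$, so that Proposition~\ref{prop:glodf_non_cut} applies to the outage $\calE_\textrm{int}$ within $G_\calI$ and the factor $K^{\calE_\textrm{int}}$ in~\eqref{ch:cf; subsec:bridgeoutage; eq:DeltaP.l} is well-defined.

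First I would handle the internal-line contribution. By the block characterization in Proposition~\ref{prop:glodf_non_cut}, the GLODF $K^{\calE_\textrm{int}}_{\ell\hat\ell}=0$ whenever $\ell$ and $\hat\ell$ lie in different blocks of $G_\calI$. The first hypothesis states exactly that $\ell$ shares no block with any outaged internal line $\hat\ell\in\calE_\textrm{int}$, so $K^{\calE_\textrm{int}}_{\ell\hat\ell}=0$ for every $\hat\ell\in\calE_\textrm{int}$. This immediately annihilates the first sum $\sum_{\hat\ell\in\calE_\textrm{int}}K^{\calE_\textrm{int}}_{\ell\hat\ell}f_{\hat\ell}$ in~\eqref{ch:cf; subsec:bridgeoutage; eq:DeltaP.l}, and it also kills the inner double sum $\sum_{\hat\ell'\in\calE_\textrm{int}}K^{\calE_\textrm{int}}_{\ell\hat\ell'}D_{\hat\ell',kj(\hat\ell)}$ appearing in the tie-line term, regardless of the values of the PTDF factors $D_{\hat\ell',kj(\hat\ell)}$. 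Consequently~\eqref{ch:cf; subsec:bridgeoutage; eq:DeltaP.l} collapses to $\Delta f_\ell=\sum_{\hat\ell\in\calE_\textrm{tie}}f_{\hat\ell}\sum_{k:\alpha_k>0}\alpha_k\,D_{\ell,kj(\hat\ell)}$.

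Next I would dispatch the remaining tie-line term using the Simple Path Criterion (Lemma~\ref{lem:simple_path}), applied to the connected subnetwork $G_\calI$. Both the participating bus $k$ (with $\alpha_k>0$) and the in-island endpoint $j(\hat\ell)$ of each outaged tie line $\hat\ell\in\calE_\textrm{tie}$ lie in $\calI$. The second hypothesis asserts that $\ell$ lies on no simple path connecting $j(\hat\ell)$ to such a $k$; since the criterion is symmetric in its two endpoints, Lemma~\ref{lem:simple_path} yields $D_{\ell,kj(\hat\ell)}=0$ for every tie line $\hat\ell\in\calE_\textrm{tie}$ and every participating bus $k$. Hence the entire residual sum vanishes and $\Delta f_\ell=0$, i.e., the flow on $\ell$ is unchanged post-contingency.

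I expect this argument to be essentially a clean assembly of two results already proved, so the only real care needed is bookkeeping of the ambient graph: all blocks, simple paths, PTDFs and GLODFs here must be read inside the single island subnetwork $G_\calI$ rather than the full network $G$, and one must first confirm that $\calE_\textrm{int}$ is a non-cut set of $G_\calI$ so that both $K^{\calE_\textrm{int}}$ and Proposition~\ref{prop:glodf_non_cut} are applicable. The decoupling of the islands under the proportional control $\mathbb G_\alpha$, which is assumed in the statement, is precisely what licenses restricting attention to $\calI$ alone.
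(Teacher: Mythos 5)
Your proof is correct and follows essentially the same route as the paper: starting from the decomposition~\eqref{ch:cf; subsec:bridgeoutage; eq:DeltaP.l}, annihilating the internal-line contribution (including the inner double sum) via the block characterization of Proposition~\ref{prop:glodf_non_cut} applied within the island, and then eliminating the residual tie-line term with the Simple Path Criterion of Lemma~\ref{lem:simple_path}. Your added care about reading all blocks, paths, and distribution factors inside $G_\calI$ and verifying that $\calE_\textrm{int}$ is a non-cut set of the island matches the paper's own setup.
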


We can reformulate the latter result in even simpler terms in the case of a single bridge line outage, obtaining the following simplified characterization for the surviving lines whose post-contingency flow remains unchanged, see~\cite[Corollary 2]{TPS2}.

\begin{cor}[Characterization for bridge failures]
Consider the outage of a bridge line $\hat \ell=(\hat i, \hat j)$ with nonzero flow $f_{\hat \ell}\neq 0$ that disconnects the power network into two islands. Let $\calI$ be the island that contains node $\hat j$.
Under the proportional control $\mathbb G_{\alpha}$, for any line $\ell$ in $\calI$ if there is no simple path in $\calI$ that contains $\ell$ from the node $\hat j$ to a participating bus $k\in \calI$ with $\alpha_{k}>0$, then the post-contingency flow on line $\ell$ remains unchanged.
\end{cor}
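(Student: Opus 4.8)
The plan is to recognize the statement as the specialization of Proposition~\ref{prop:glodf_cut} to the degenerate cut set $\calE=\{\hat\ell\}$, and then to make the reduction self-contained through the bridge-outage flow identity~\eqref{ch:cf; subsec:bridgeoutage; eq:Klhatl.1} together with the Simple Path Criterion of Lemma~\ref{lem:simple_path}.

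First I would set up the decomposition of the outage relative to the island $\calI=\calI_{\hat j}$ containing $\hat j$. Since the only outaged line is the bridge $\hat\ell=(\hat i,\hat j)$, which has exactly one endpoint in $\calI$, it is a tie line for $\calI$; hence $\calE_\textup{int}=\emptyset$ and $\calE_\textup{tie}=\{\hat\ell\}$ with $j(\hat\ell)=\hat j$. In particular the hypothesis of Proposition~\ref{prop:glodf_cut} concerning internal lines is vacuous, and its remaining hypothesis is exactly that $\ell$ lie on no simple path in $\calI$ joining $j(\hat\ell)=\hat j$ to a participating bus $k$ with $\alpha_k>0$. The corollary is therefore the stated special case, and the remaining work is just to verify the reduction directly.

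To do so I would start from~\eqref{ch:cf; subsec:bridgeoutage; eq:Klhatl.1}, which gives, for every surviving line $\ell\in E_{\calI}$,
\[
\Delta f_\ell = f_{\hat\ell}\sum_{k:\,\alpha_k>0}\alpha_k\,D_{\ell,k\hat j},
\]
where the PTDF $D_{\ell,k\hat j}$ is computed on the (decoupled) island $\calI$. Under the hypothesis, for each participating bus $k$ there is no simple path in $\calI$ containing $\ell$ that connects $\hat j$ and $k$; applying Lemma~\ref{lem:simple_path} to the island $\calI$ with the node pair $\{\hat j,k\}$ then yields $D_{\ell,k\hat j}=0$ for every such $k$. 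Consequently each summand vanishes and $\Delta f_\ell=0$, i.e.\ the post-contingency flow on $\ell$ is unchanged, as claimed.

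The argument is essentially immediate once the correct island is fixed, so I do not anticipate a genuine obstacle; the only points requiring care are bookkeeping ones. I would make sure to apply the Simple Path Criterion on the surviving subnetwork $\calI$ rather than on $G$ (the islands operate independently post-contingency, and the matrices $D$ and $L^\dagger$ entering~\eqref{ch:cf; subsec:bridgeoutage; eq:Klhatl.1} are those of $\calI$), and to note that sufficiency follows term-by-term: because each PTDF $D_{\ell,k\hat j}$ vanishes individually under the hypothesis, no cancellation argument across participating buses is needed and the conclusion holds regardless of the sign of $f_{\hat\ell}$ or of the particular weights $\alpha_k$.
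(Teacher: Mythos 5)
Your proposal is correct and follows the paper's own route: the corollary is exactly Proposition~\ref{prop:glodf_cut} specialized to $\calE_\textup{int}=\emptyset$, $\calE_\textup{tie}=\{\hat\ell\}$ with $j(\hat\ell)=\hat j$, where the flow change reduces to $\Delta f_\ell = f_{\hat\ell}\sum_{k:\alpha_k>0}\alpha_k D_{\ell,k\hat j}$ as in~\eqref{ch:cf; subsec:bridgeoutage; eq:Klhatl.1}, and each PTDF vanishes term-by-term by the Simple Path Criterion of Lemma~\ref{lem:simple_path} applied to the island $\calI$. Your bookkeeping remarks (applying the criterion on the island rather than on $G$, and noting that no cancellation across participating buses is needed) are exactly the points that matter.
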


\FloatBarrier

\section{Improving network reliability by bridge-block decomposition refinement}
\label{sec:optimization}
In this section we outline a new method to refine the bridge-block decomposition of a given power network by means of \textit{line switching actions}. 
Even though we usually think of transmission lines as static assets, network operators have the ability to remotely control circuit breakers that open and close them, effectively removing them from service and thus modifying the network topology. This procedure, that goes under the name of \textit{transmission line switching}, is a mature power system technology that is already commonly used by network operators. 
A large body of literature has been devoted to the optimization problems that stem from this network flexibility, see our discussion about Optimal Transmission Switching (OTS) in Section~\ref{sub:literature}.

In this paper we want to propose a new way to improve the network reliability leveraging the flexibility offered by these transmission line switches. Our strategy is inspired by Lemma~\ref{lem:finer}, which suggests that switching off a subset of the currently active lines is a way to refine the bridge-block decomposition of a network. In Figs.~\ref{fig:influencegraph_pre} and~\ref{fig:influencegraph_post} we briefly illustrated how a significant bridge-block decomposition refinement is possible for the IEEE 118-bus network by switching off only three lines.
Such switching actions can be taken both proactively, as semi-permanent network design decision, but also as temporary response to a contingency or an unusual load/generation profile. As summarized by our results in Section~\ref{sec:localization}, a finer bridge-block decomposition greatly improves the network robustness against failure propagation, since it automatically avoids long-distance propagation of line failures. More specifically, an immediate consequence of Proposition~\ref{prop:glodf_non_cut} is that any non-cut set line outage does not propagate outside the bridge-bridge block(s) to which the disconnected lines belong. 

Our approach is particularly promising in view of the fact that most power grids have a trivial bridge-block decomposition, making a global propagation of line failures very likely in these networks. Indeed, all the considered IEEE test networks have a single massive bridge-block and possibly a few additional blocks consisting of a single vertex, see Fig.~\ref{fig:influencegraph_pre}, Table~\ref{tab:networkstatistics} in Section~\ref{sec:numerics}, and Fig.~\ref{fig:oneshotalgorithm}(a). 

The crucial step is thus, given a power network $G$, identifying a subset of lines $\calE \subset E$ to be temporarily removed from service by means of switching actions, so that the newly obtained network $G^\calE$ has a ``good'' bridge-block decomposition $\calBB(G^\calE)$ while still remaining connected. In this section, we formulate an optimization problem to find the best subset of lines to switch off that would guarantee a desired network bridge-block decomposition without creating congestion in the remaining lines.
However, any line switching action causes a power flow redistribution on the remaining lines (exactly like line outages do) and may cause overloads. It is then crucial to carefully assess the impact of each set of switching actions on the network flows and avoid creating any congested line. A brute-force search to find such best switching actions and recalculate the line flows for each instance is infeasible, since there are exponentially many such subsets of lines $\calE \subset E$ to consider.


Aiming to reduce the number of possible subsets of lines to consider, our method takes a different approach and ``works backwards''. Indeed, we first identify a ``good'' network partition $\calP$ whose clusters we want to become its bridge-blocks and then, using these clusters, we choose which of the cross-edges between them to remove so that $\calP$ actually becomes the network bridge-block decomposition. We thus propose an algorithm that consists of two steps, each one tackling one of these two sub-problems. 
This section is structured as follows:
\begin{itemize}
    \item In Subsection~\ref{sub:mod} we introduce the \textit{Optimal Bridge-Block Identification} (OBI) problem, which aims at identifying a good network partition based on the current network flow configuration, 
    \item In Subsection~\ref{sub:obs} we tackle the second sub-problem by formulating the \textit{Optimal Bridge Selection} (OBS) problem, which identifies the best subset of lines to be switched off that refines the network bridge-block decomposition and simultaneously minimizes the network congestion level, 
    \item In Subsection~\ref{sub:pra} we discuss some practical considerations regarding the algorithm implementation and present a faster recursive variant.
\end{itemize}

\subsection{Identifying candidate bridge-blocks using network modularity}
\label{sub:mod}
The bridge-block decomposition refinement should ideally require a minimum number of line switching actions. For this reason, it is thus crucial to leverage the structure of the existing network as much as possible when identifying the candidate bridge-blocks. In this subsection we describe in detail how we tackle this first sub-problem using the notion of network modularity.

Consider a power network $G=(V,E,\bm{b})$ with a fixed power injection configuration $\pp$. Aiming to find a network partition $\calP = \{\calV_1,\dots,\calV_{|\calP|}\}$ into clusters with the most suitable properties to become bridge-blocks via line switching, we introduce the following \textit{Optimal Bridge-Blocks Identification (OBI)} problem:
\begin{equation}
\label{eq:maxmodprob}
\max_{\calP \in \Pi(G)} \frac {1}{2 M} \sum_{r=1}^{|\calP|} \sum \limits _{i,j \in \calV_r} \left ( |f_{ij}|-\frac {F_{i} F_{j}}{2 M} \right ),  \qquad \textrm{(OBI)}
\end{equation}
where $\ff=(f_{ij})_{(i,j)\in E}$ is the current power flow pattern determined by the injections $\pp$ as prescribed by~\eqref{eq:flowsdc_model}, $F_{i} :=\sum_{j \in V} |f_{ij}|$ is the sum of absolute line flow incident to vertex $i$ and $M := \frac{1}{2} \sum_{i \in V} F_i = \frac{1}{2} \sum_{i,j \in V} |f_{ij}|$ is the total absolute line flows over the network.
When we restrict the subset of feasible partitions to $\Pi_{k}(G)$, we refer to problem~\eqref{eq:maxmodprob} as the OBI-$k$ problem. 

Having in mind the optimal line switching that we intend to perform in the second step of our algorithm, it is important to remember that each line that we decide to switch off will cause a power flow redistribution on the remaining lines. Aiming to prevent any of these lines to become congested, it is thus crucial to identify in this first step a ``good'' partition into clusters such that (i) a smaller number of switching actions to create the desired bridge-block decomposition and that (ii) the power flow redistribution ensuing the switching actions has a minimal impact on the rest of the network and does not create congestion.

The OBI problem~\eqref{eq:maxmodprob} is formulated precisely to find a partition with these features. Indeed, its objective function indirectly favors partitions that have:
\begin{itemize}
    \item[(F1)] very few (and/or low-weight) cross-edges between clusters;
    \item[(F2)] clusters are balanced in terms of total net power and with very modest power flows from/towards the neighboring clusters.
\end{itemize}
These features follows from the fact that the objective function of~\eqref{eq:maxmodprob} is a power-flow-weighted version of the \textit{network modularity} introduced by~\cite{Newman2004b,Newman2004}. Network modularity is a key notion that has been introduced to tackle a classical problem in complex networks theory, namely unveiling the underlying community structure in a given network. Informally, the proposed optimal block identification problem is a community detection problem on a transmission network topology while using in a crucial way its electrical properties. The network modularity $Q(\calP)$ associated with a partition $\calP$ is a scalar between $-1/2$ and $1$~\cite{Brandes2008} that quantifies the quality of such a partition: a large value indicates a stronger ``community structure'', in the sense that there are more (and in the weighted case high-weight) edges within each cluster in $\calP$ than across them.

Broadly speaking, a partition that yields a high network modularity is one where the connectivity within each cluster is larger than the one expected in a random network with the same nodal properties. More specifically, the modularity of a network partition measures the total weight of the internal edges within each cluster against the same quantity in a network with the same number of nodes and nodal properties, but in which the edges are placed randomly. Indeed, the term $\frac {F_i F_j}{2 M}$ should be interpreted as the likelihood of finding edge $(i,j)$ in a random graph conditioned on having the node generalized degree sequence $F_1,\dots, F_n$.

For the OBI problem we take as the objective function the network modularity where each edge weight is equal to absolute power flow on the corresponding line. In view of the definition of network modularity, the optimal partitions for the OBI problem are those with few lines between clusters and modest power flowing on them. We also remark that using the absolute power flows as weights in the OBI problem implicitly makes the resulting bridge-block decomposition adaptive to the possibly changing power injection and flow patterns. Such a feature would not be present if we used an unweighted network modularity as objective function or if we used other static network quantities, e.g. the line susceptances $\{\b_\ell\}_{\ell \in E}$, as weights. 

The notion of network modularity has been extended to deal with negative edge weights in~\cite{Gomez2009}: in this extension the high-modularity partitions tend to assign the two nodes at the endpoints of an edge with a negative weight to different clusters. In power networks, however, the flow signs are just an artifact of the (arbitrary) directions chosen for the network edges -- in fact, one could always choose edge directions so that all the flows are positive. Our goal to identify the network partitions with modest flows across different clusters \textit{regardless of their directions} makes the \textit{absolute values} of the flows the sensible choice for the edge weights.



\subsubsection{Related classical clustering problems}
One might argue that a simpler approach for bridge-block identification could have been solving a weighted \textit{minimum $k$-cut problem}
\begin{equation}
\label{eq:mincut}
	\arg\min_{\calP \in \Pi(G)} \sum_{r=1}^{|\calP|} \mathrm{Cut}(\calV_r),
\end{equation}
where $\mathrm{Cut}(\calV_r) := \sum_{i \in \calV_r, \, j \in \calV_r^c} |f_{ij}|$. This combinatorial optimization problem can be solved efficiently, especially in the case $k=2$, as illustrated in~\cite{StoerWagner1997}. However, such a min-cut solution would often not be a satisfactory partition for the power systems application at hand. Indeed, in many instances the min-cut solution separates one or very few vertices from the rest of the network and such trivial partitions will yield a rather unbalanced bridge-block decomposition with limited failure localization potential. On the contrary, it is very unlikely that the OBI problem yields trivial partitions. Indeed, using the weighted network modularity~\eqref{eq:maxmodprob} as objective function favors partitions with ``balanced clusters'', which in our setting means with small total net power $\sum_{i \in \calV_r} p_i$, $r=1,\dots,|\calP|$. This feature of high modularity partition has been proved rigorously: as shown by~\cite{Reichardt2006} and~\cite{Bertozzi2018}, for a fixed number $k$ of clusters the OBI problem~\eqref{eq:maxmodprob}
has the same solution of the following \textit{balanced-cut problem}
\begin{equation}
\label{eq:balancedcut}
	\arg\min_{\calP \in \Pi_k(G)} \sum_{r=1}^k \left ( \mathrm{Cut}(\calV_r) + \frac {\mathrm{Vol}(\calV_r)^2}{2 M} \right ),
\end{equation}
where we used the classical notion of \textit{volume} of a cluster, which in our context is equal to $\mathrm{Vol}(\calV_r)= \sum_{i \in \calV_r} F_i$. This equivalence suggests that an optimal partition that maximizes modularity favors sparsely interconnected clusters with balanced volumes. This is particularly evident in the case of $k=2$ target clusters, in which~\eqref{eq:balancedcut} is equivalent to
\begin{equation}
\label{eq:modvol}
	\arg\min_{\calV \subset V} \left ( \mathrm{Cut}(\calV) + \frac {\mathrm{Vol}(\calV) \mathrm{Vol}(\calV^c)}{2 M} \right ).
\end{equation}
The second term $\mathrm{Vol}(\calV) \mathrm{Vol}(\calV^c)$ is maximal when $\mathrm{Vol}(\calV)= \mathrm{Vol}(\calV^c)=M$ and thus favors network partition with two clusters of roughly equal volume. In contrast, the first term favors a partition of the graph in which few edges with small weights are removed. This is reminiscent of the NCut problem introduced in~\cite{ShiMalik2000}, in which the objective is to minimize the ratio
\begin{equation}
\label{eq:ncut}
	\textrm{NCut}(\calP)= \frac{\mathrm{Cut}(\calV,\calV^c)}{\mathrm{Vol}(\calV)} +  \frac{\mathrm{Cut}(\calV,\calV^c)}{\mathrm{Vol}(\calV^c)} 
	\propto \frac{\mathrm{Cut}(\calV,\calV^c)}{\mathrm{Vol}(\calV) \mathrm{Vol}(\calV^c)}
\end{equation}
over all possible bipartitions $\calP=\{\calV,\calV^c\}$.
Whenever a cluster-balancing condition is introduced in the objective function either in the form of~\eqref{eq:balancedcut}, \eqref{eq:modvol} or~\eqref{eq:ncut} the min-cut problem become NP-hard, see~\cite{WagnerWagner1993} for details on NCut and~\cite{Brandes2006} for details on network modularity. Efficient algorithms are thus necessary to find good locally optimal network partitions with reasonable computational costs. Numerous methods have been proposed in the literature, for a review see~\cite{Fortunato2010,Fortunato2016,Porter2009,Chen2014}. These include greedy algorithms~\cite{Blondel2008,Clauset2004,Newman2004a,Wakita2007}, extrenal optimization~\cite{Boettcher2002,Duch2005}, simulated annealing~\cite{Guimera2005,Massen2005,Medus2005}, spectral methods 
~\cite{White2005,Newman2006a,Richardson2009}, sampling techniques~\cite{Sales-Pardo2007} and mathematical programming~\cite{Agarwal2008}.
In the present work we focus on two of these state-of-the-art algorithms, namely the ``fastgreedy'' method of~\cite{Clauset2004} and the spectral clustering method~\cite{Newman2006a,Newman2006b}, which we briefly review later in Section~\ref{sec:numerics}. These two methods are particularly suitable for our purposes because they allow for recursive partitioning. In the next section we use this feature in a crucial way, by devising an algorithmic procedure that progressively refines the block decomposition by splitting existing blocks in smaller ones, thus allowing to take switching actions in a sequential fashion.

\subsubsection{Intentional controlled islanding}

In this subsection, we briefly give an overview of another failure mitigation strategy that has some similarities with the candidate bridge-blocks identification described earlier in this section.

In the midst of a cascading failure, it is sometimes recommended to avoid catastrophic wide area blackouts by means of \textit{intentional controlled islanding} (ICI)~\cite{Zhao2003,you2004}. In other words, by means of line switching actions, they temporarily partition the network into islands to prevent the cascading failure from propagating further and/or creating very imbalanced grid sections. Moreover, such an intentional islanding can enhance faster restoration of the full grid later and avoid transient stability issues during reconnection~\cite{you2003self,you2004}. 

The network operator ideally would want each of these islands to have:
\begin{itemize}
\item a net power as close as possible to zero, in the sense that power supply and demand should be almost equal, so that only minimal load shedding or generators adjustment are needed;
\item enough capacity to safely sustain the power flows within the island.
\end{itemize}
A lot of research has been devoted to the problem of finding a network partition with such properties, using either efficient search algorithms~\cite{Sun2003,Zhao2003}, mixed integer programming~\cite{Fan2012,Pahwa2013}, graph theory~\cite{Soltan2017}, or spectral clustering techniques~\cite{Esmaeilian2017,Bialek2014}. Similar methods have also been used in~\cite{Liu2009,Vittal2003,Vittal1998} to group together synchronized generators in case of frequency disturbance events. 

These island design principles are clearly very much aligned with those we outlined for bridge-blocks, see (F1) and (F2) in the previous section. In view of this similarity, we expect an optimal islands design based on weighted network modularity to be very effective, but a detailed exploration of this other application is out of the scope of the present paper.

Dynamically creating a bridge-block decomposition is proposed in~\cite{Bialek2021}  as a less drastic emergency measure in alternative to controlled islanding. The authors also classical spectral $k$-way clustering techniques to identify the optimal clusters to be transformed into bridge-blocks.

\subsection{Building the bridge-block decomposition via optimal switching actions}
\label{sub:obs}

In the previous subsection we formulated the OBI problem, which for a given power network identifies a partition with desirable features that we want to transform into the ``backbone'' of the network bridge-block decomposition. In this subsection we focus on the second sub-problem, namely how to, given such a partition $\calP$, reconfigure the network topology so that the resulting network (i) has a bridge-block decomposition at least as fine as $\calP$ and (ii) has minimum line congestion. 

The feasibility and effectiveness of such a network reconfiguration crucially depends on the ``quality'' of the chosen partition $\calP$. Nonetheless, all the results of this section do not rely on any specific features of $\calP$ and they are valid for a general partition of $G$. The only additional assumption that we tacitly make in the rest of the section is the following:
\begin{itemize}
    \item[(A1)] for every cluster $\calV \in \calP$, the subgraph of $G$ induced by $\calV$ is connected.
\end{itemize}
Note that this is not really a stringent assumption: indeed, for any partition $\calP$ that does not satisfy it, we can always consider the finer partition $\calP'$ in which all the connected components of all its subgraphs are considered as separate clusters. Assumption (A1) is useful as it links the connectedness of the network with that of its reduced graph, as illustrated by the following lemma. 

\begin{lem}[Connectedness of the reduced graph] \label{lem:connected}
If $G$ is a connected graph, then the reduced graph $G_\calP$ is connected for any partition $\calP$. The converse is true only when the partition $\calP$ satisfies (A1). 
\end{lem}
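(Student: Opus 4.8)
The plan is to prove the two directions separately, tracking carefully where assumption (A1) is actually needed. For the forward implication, suppose $G$ is connected and fix two clusters $\calV_a,\calV_b\in\calP$. I would pick representatives $u\in\calV_a$ and $v\in\calV_b$ and take any path $u=w_0,w_1,\dots,w_\ell=v$ in $G$, which exists by connectedness. The key idea is to \emph{project} this path onto the quotient: letting $\calV(w)$ denote the cluster containing $w$, each step $w_i,w_{i+1}$ either lies in the same cluster (an internal edge, contributing nothing) or in distinct clusters, in which case $(w_i,w_{i+1})$ is a cross edge and hence gives a genuine edge of $G_\calP$ between $\calV(w_i)$ and $\calV(w_{i+1})$. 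Collapsing consecutive repetitions in the sequence $\calV(w_0),\dots,\calV(w_\ell)$ yields a walk in $G_\calP$ from $\calV_a$ to $\calV_b$, so $G_\calP$ is connected. Note this direction uses no assumption on the clusters.

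For the converse, I would assume (A1) holds and that $G_\calP$ is connected, and show that any two vertices $u,v\in V$ are joined by a path in $G$. Let $\calV_a\ni u$ and $\calV_b\ni v$, and take a path $\calV_a=\calV_{i_0},\calV_{i_1},\dots,\calV_{i_r}=\calV_b$ in $G_\calP$. By the definition of the reduced graph, each of its edges is witnessed by an actual cross edge $(a_k,b_k)$ of $G$ with $a_k\in\calV_{i_k}$ and $b_k\in\calV_{i_{k+1}}$. The idea is to \emph{lift} the reduced path back to $G$ by stitching these cross edges together inside each cluster: starting from $u$, use (A1) to connect $u$ to $a_0$ within $\calV_{i_0}$, cross to $b_0$, connect $b_0$ to $a_1$ within $\calV_{i_1}$, and so on, finally connecting $b_{r-1}$ to $v$ within $\calV_{i_r}$. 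Concatenating these segments produces a path from $u$ to $v$ in $G$, proving $G$ is connected.

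Finally, to see that (A1) cannot be dropped, I would exhibit a small counterexample: take $V=\{1,2,3,4\}$ with edges $(1,2)$ and $(3,4)$ only, and the partition $\calV_1=\{1,3\}$, $\calV_2=\{2,4\}$. Here $G$ has two connected components, yet both edges are cross edges, so $G_\calP$ consists of two vertices joined by two parallel edges and is therefore connected; assumption (A1) fails precisely because neither cluster induces a connected subgraph. The main subtlety of the whole argument is exactly this converse step: connectedness of $G_\calP$ only guarantees a chain of cross edges linking the clusters, and without within-cluster connectivity there is no way to join the scattered endpoints $a_k,b_k$ of these cross edges into a single path of $G$, which is why (A1) is both sufficient and, as the counterexample shows, genuinely required.
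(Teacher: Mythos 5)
Your proof is correct. The paper states Lemma~\ref{lem:connected} without providing any proof, so there is nothing to compare against directly; your argument supplies the missing details in the natural way. The forward direction (projecting a path of $G$ onto $G_\calP$ and collapsing consecutive repetitions) and the converse (lifting a path of $G_\calP$ back to $G$ by stitching the witnessing cross edges together inside each cluster, which is exactly where (A1) enters) are both sound, and the one degenerate case you leave implicit --- $u$ and $v$ lying in the same cluster --- is handled directly by (A1). Your four-vertex counterexample correctly shows that the converse can fail without (A1): the reduced graph collapses the two components of $G$ onto a single connected multigraph precisely because neither cluster induces a connected subgraph. This matches the role the paper intends for (A1) in the surrounding discussion of the OBS problem, where connectedness of $(G^\calE)_\calP$ is used to certify connectedness of $G^\calE$.
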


To any partition $\calP$ is associated a subset $E_c(\calP) \subset E$ of cross-edges, i.e., the edges whose endpoints belong to different clusters of $\calP$ (cf.~Subsection~\ref{sub:basic}). Given a partition $\calP$ of $G$, we are interested in finding a subset $\calE \subset E_c(\calP)$ of cross-edges that, if removed, yield a network $G^\calE$ whose bridge-block decomposition is at least as fine as $\calP$, i.e.,
\begin{equation}
\label{eq:BBGE}
    \calBB(G^\calE) \succeq \calP.
\end{equation}
Each of these subsets of cross-edges corresponds to a different set of transmission lines that we could temporarily disconnect by means of switching actions, which yields a transmission network topology with the desired bridge-block decomposition.
The next proposition shows that the condition~\eqref{eq:BBGE} is equivalent to requiring that the reduced graph induced by the partition $\calP$ on the post-switching network $G^\calE$ is a tree.
\begin{prop}[Reduced graph characterization]
\label{prop:treecharacterization}
Let $\calP$ be a vertex partition of a connected graph $G$. Then, $\calBB(G) \succeq \calP$ if and only if the reduced graph $G_\calP$ is a tree.
\end{prop}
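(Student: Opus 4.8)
The plan is to reduce the statement to the bijective correspondence between the cross-edges in $E_c(\calP)$ of $G$ and the edges of the reduced graph $G_\calP$, and to show that under this correspondence being a bridge of $G$ matches being a bridge of $G_\calP$. First I would record the elementary fact, implicit in the definition $\calBB(G)=\calP_{\mathrm{circuit}}(G)$, that an edge $e=(u,v)$ is \emph{not} a bridge if and only if its endpoints lie in the same bridge-block: if $e$ lies on a cycle then that cycle is a circuit through $u$ and $v$, while if $u,v$ share a bridge-block $B$ (which contains no bridge and is therefore $2$-edge-connected) then $B$ contains a path from $u$ to $v$ avoiding $e$, so $e$ is not a bridge.

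Using this, I would prove the intermediate claim that $\calBB(G)\succeq\calP$ holds if and only if every cross-edge in $E_c(\calP)$ is a bridge of $G$. For the forward direction, a cross-edge $e=(u,v)$ that were \emph{not} a bridge would place $u$ and $v$ in a common bridge-block, which by $\calBB(G)\succeq\calP$ sits inside a single cluster, contradicting that $e$ joins two clusters. For the reverse direction, I would fix a bridge-block $B$ and a path within $B$ between any two of its vertices; each edge of this path is a non-bridge with both endpoints in $B$, hence (by the contrapositive of the hypothesis) an internal edge, so all vertices of $B$ lie in one cluster and $B$ is contained in a single cluster of $\calP$.

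Next I would establish the key combinatorial step, which is where assumption (A1) enters: for a cross-edge $e$ with image $\bar e$ in $G_\calP$, $e$ is a bridge of $G$ if and only if $\bar e$ is a bridge of $G_\calP$. One direction projects a path from $u$ to $v$ in $G-e$ to a walk in $G_\calP-\bar e$ joining the two incident clusters. The converse lifts a path in $G_\calP-\bar e$ back to $G$: using (A1), whenever the reduced path enters and leaves a cluster $\calV$, the entry and exit vertices can be joined by a path inside the connected induced subgraph on $\calV$, and stitching these internal paths together with the traversed cross-edges yields a path from $u$ to $v$ in $G-e$. This path-lifting is the main obstacle, since it is precisely the place where the connectedness of each cluster is indispensable.

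Finally I would combine the pieces. Since $G$ is connected, Lemma~\ref{lem:connected} gives that $G_\calP$ is connected, and a connected (multi)graph is a tree precisely when each of its edges is a bridge. Chaining the equivalences --- $\calBB(G)\succeq\calP$ iff every cross-edge of $G$ is a bridge of $G$ iff (by the contraction step) every edge of $G_\calP$ is a bridge of $G_\calP$ iff $G_\calP$ is a tree --- completes the argument. I would only need to double-check the multigraph bookkeeping, namely that parallel cross-edges between the same pair of clusters correspond to parallel edges of $G_\calP$ (forming a $2$-cycle, hence non-bridges), so that the characterization of trees via bridges is not disturbed by edge multiplicities.
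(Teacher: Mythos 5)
Your proof is correct, but it is organized differently from the paper's. The paper argues directly with cycles: if $G_\calP$ is not a tree it contains a nontrivial cycle, which by (A1) lifts to a cycle in $G$ visiting at least two clusters; since all vertices of a circuit lie in one bridge-block, that bridge-block straddles two clusters and $\calBB(G)\not\succeq\calP$ (the converse is left as ``along the same lines''). You instead factor the statement through bridges: $\calBB(G)\succeq\calP$ iff every cross-edge is a bridge of $G$, bridges of $G$ among cross-edges correspond to bridges of $G_\calP$ (path lifting via (A1)), and a connected multigraph is a tree iff all its edges are bridges. The underlying combinatorics is the same --- both hinge on (A1) to lift paths/cycles from the quotient back to $G$, and both use the circuit characterization $\calBB(G)=\calP_{\mathrm{circuit}}(G)$ --- but your version is more complete (both implications are spelled out rather than one being waved at), it correctly handles the multigraph subtlety that parallel cross-edges form $2$-cycles in $G_\calP$, and it isolates a reusable intermediate fact ($\calBB(G)\succeq\calP$ iff $E_c(\calP)$ consists of bridges) that makes the role of assumption (A1) and of Lemma~\ref{lem:connected} transparent. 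The price is length: the paper's cycle-contradiction argument reaches the conclusion in a few lines.
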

\begin{proof}
Assume by contradiction that $G_\calP$ is not a tree and thus there exists a non-trivial cycle in $G_\calP$. Since we assume all the clusters of $\calP$ induce connected subgraphs, there exists also a non-trivial cycle $c$ in $G$ that includes vertices from at least two different clusters of $\calP$. Recall that by definition of bridge-block decomposition $\calP_{\mathrm{circuit}}(G) = \calBB(G)$ (cf. Subsection~\ref{sub:basic}), all the vertices on the cycle $c$ must belong to the same bridge-block, say $B \in  \calBB(G)$. But then $\calBB(G)$ cannot be finer than $\calP$ since the bridge block $B$ is not contained in any cluster in $\calP$, since it has vertices from at least two different clusters of $\calP$ and thus $B \not\subseteq V$ for every $V \in \calP$.

The reverse implication can be proved along the same lines, assuming by contradiction that $\calBB(G^\calE) \not\succeq \calP$ and leveraging again the connection between the circuits in $G$ and those in the reduced graph $G_\calP$.
\end{proof}

In view of Proposition~\ref{prop:treecharacterization}, we can assume that $G_\calP$ is not a tree, otherwise is suffices to take $\calE = \emptyset$ and the target condition~\eqref{eq:BBGE} is automatically satisfied. When the reduced graph $G_\calP$ is a not a tree (possibly with/because of multiple edges), there are many valid subsets of cross-edges $\calE \subset E_c(\calP)$ that satisfy the target property~\eqref{eq:BBGE}. Indeed, there exists a suitable subset of cross-edges for each spanning forest of the reduced graph $G_\calP$. Aiming to have the least impact possible on the network and, in particular, avoiding its disconnection, we exclude all the subsets $\calE \subset E_c(\calP)$ that are cut sets for the graph $G$ and thus we can focus on the spanning trees of the reduced graph $G_\calP$.

Among all these possible choices for the set $\calE$, we want to pick the one that has the least impact on the surviving network in terms of line congestion. We define as \textit{congestion} on a transmission line $\ell \in E$ the ratio between the absolute power flow on that line and its (thermal) capacity $C_\ell$, i.e., $|f_\ell|/C_\ell \geq 0$.

Assuming the power injections $\pp$ do not change, after the removal of the lines in any such subset $\calE \subset E_c(\calP)$, the power redistributes on the remaining $|E \setminus \calE|$ lines as described in Subsection~\ref{sub:noncut}, yielding a new vector of new power flows $\ff^\calE$. We define the \textit{network congestion level} $\gamma(\calE)$ as the maximum line congestion in the surviving graph $G^\calE$, i.e.,
\begin{equation}
\label{eq:congestionlevel}
	\gamma(\calE):=\max_{\ell \in E\setminus \calE} |f^\calE_\ell| / C_\ell.
\end{equation}
With a minor abuse of notation, we use $\gamma(\emptyset)$ to denote to the network congestion level of the original network where no lines have been removed yet, i.e., $\calE = \emptyset$. Power networks must be operated so that each line flow is strictly below the capacity of the corresponding line, meaning that the network congestion level should not be larger than one.

The problem of finding the best set $\calE$ of switching actions that yields a network $G^\calE$ with minimum network congestion and bridge-block decomposition as fine as $\calP$ can be formalized as the following \textit{Optimal Bridge Selection} (OBS) problem:
\begin{subequations}\label{eq:optswitch}
\begin{align}
	 \min_{\calE \subset E_c(\calP)} \quad & \gamma(\calE) \qquad \qquad \qquad \textrm{(OBS)} \label{eq:optswitch_basic}\\
	 \textrm{ s.t. } \quad &(G^\calE)_\calP \text{ is a tree}. \label{eq:optswitch_constraint2}
\end{align}
\end{subequations}

The constraint~\eqref{eq:optswitch_constraint2}, by requiring the reduced graph $(G^\calE)_\calP$ of the post-switching network $G^\calE$ to be a tree (and not a forest), implicitly guarantees that $(G^\calE)_\calP$ is connected and therefore so is $G^\calE$ by virtue of Lemma~\ref{lem:connected}. In other words, for any feasible set $\calE$ for~\eqref{eq:optswitch} the resulting post-switching network $G^\calE$ is still connected.

In view of Proposition~\ref{prop:treecharacterization}, constraint~\eqref{eq:optswitch_constraint2} ensures that the bridge-block decomposition of the surviving network $G^\calE$ is at least as fine as $\calP$, but it can possible be finer. This happens when the removal of the cross-edges in $\calE$ accidentally creates additional bridges in $G^\calE$ that the original network did not have. By construction these new bridges were not cross-edges and thus must have been \textit{inside} one or more clusters of $\calP$. This means that the switching actions $\calE$ involuntarily split each of these clusters into two or more smaller bridge-blocks, and thus the resulting bridge-block decomposition $\calBB(G^\calE)$ is finer than $\calP$. This happens also in the example that we illustrate in the next subsection, see Fig.~\ref{fig:oneshotalgorithm}.
If $G_\calP$ is not a tree, the OBS problem~\eqref{eq:optswitch} has always at least one feasible nontrivial solution $\calE \neq \emptyset$ for any target partition $\calP$. Indeed, $G_\calP$ has at least one spanning tree as long as it is connected, property guaranteed by assumption (A1) and by Lemma~\ref{lem:connected}.

It could be the case that even for the optimal set $\calE^*$ of switching actions, at least one line gets congested, i.e.,
$$\gamma(\calE) \geq 1, \quad \text{ for every feasible subset } \calE.$$
This is a rare occurrence and, in fact, an optimal solution $\calE^*$ with congestion level $\gamma(\calE^*) < 1$ exists in most cases (see Section~\ref{sec:numerics} for our numerical results for IEEE test networks), especially if the original network was not heavily congested, i.e., $\gamma (\emptyset) \not\approx 1$. Furthermore, as we illustrate in the same section, it is often the case that the network congestion level \textit{decreases} after taking the optimal switching actions, i.e.,
\[
	\gamma(\calE^*) < \gamma (\emptyset).
\]
A possible explanation for this counter-intuitive fact is the presence of loop-flows involving edges in two or more clusters, which are then removed by the switching actions, hence possibly reducing the congestion on the newly created bridges or inside the clusters. 

Since the optimal solution of the combinatorial OBS problem~\eqref{eq:optswitch} may not be unique, in which case we heuristically select the one that minimizes the total number of congested lines in the post-switching network. 

Lastly, we remark that an optimization problem similar to OBS is proposed in~\cite{Bialek2021} to refine the bridge-block decomposition as emergency measure. In their formulation, however, the objective is to minimize the total weight of the spanning tree $(G^\calE)_\calP$ of the reduced graph rather than minimizing the congestion in the post-switching network.

\subsection{Full algorithm and its recursive variant}
\label{sub:pra}
We now present in full our proposed algorithm by combining the two steps described in the previous subsections. Furthermore, we discuss some practical considerations regarding the implementation of the OBS problem. In particular, we show how the computational complexity of~\eqref{eq:optswitch} can be dramatically reduced by adopting a strategy where the bridge-block decomposition is progressively made finer by recursively splitting the existing bridge-blocks in smaller ones.


By combining the OBI and OBS problems described in the previous subsections we obtain the following procedure to refine the bridge-block decomposition of a given power network $G$ with power injections $\pp$. This algorithm also requires in input the target number of bridge-blocks $b \in \mathbb{N}$.

\begin{algorithm}[!ht]
\SetAlgoLined
\SetKwInOut{Input}{Input}\SetKwInOut{Output}{Output}
\Input{The power network $G=(V,E,\bm{b})$, the set of power injections $\pp$, the target number of bridge-blocks $b$}
\BlankLine
\DontPrintSemicolon
Calculate the current line flows $\ff$ on the network $G$ induced by the injections $\pp$;\\
Calculate the current bridge-block decomposition $\calBB(G)$;\\
Consider in isolation the largest bridge-block in size of the current bridge-block decomposition, say $\calV \in \calBB(G_i)$;\\
Solve a OBI-$b$ problem for $\calV$ in isolation to find its optimal $b$-partition $\calP^*$;\\
Solve the OBS problem~\eqref{eq:optswitch} for $\calV$ in isolation with target $b$-partition $\calP^*$, obtaining an optimal subset $\calE^*_i$;\\
Calculate the new congestion level $\gamma^*=\gamma(\calE^*)$;\\
Calculate the new set of active lines to be $E' := E \setminus \calE^*_{i}$, set $G'=(V,E',\bm{b}')$, and calculate its bridge-block decomposition $\calBB(G')$;\\
\Output{$G'$, $\calBB(G')$, $\gamma^*$}
\caption{One-shot bridge-block decomposition refinement}\label{alg:oneshotrefinement}
\end{algorithm}
Fig.~\ref{fig:oneshotalgorithm} illustrates the steps of Algorithm~\ref{alg:oneshotrefinement} with $b=4$ when used to refine the bridge-block decomposition of the IEEE 73-bus network.

\begin{figure}[!ht]
    \centering
    \vspace{-0.3cm}
    \subfloat[The graph $G$ corresponding to IEEE 73 network with its bridge-block decomposition $\calBB(G)$ consisting of three bridge-blocks, two of which are trivial.]{\makebox[1.2\width]{\includegraphics[scale=0.39]{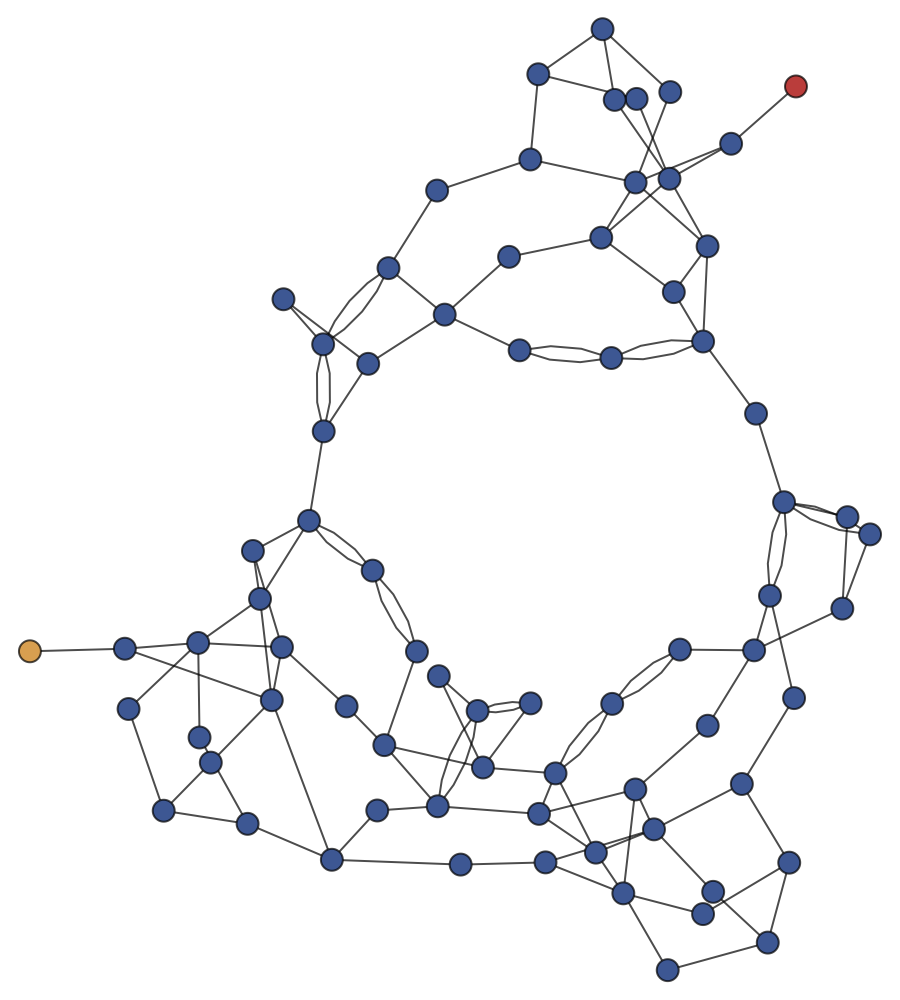}}}
    \hspace{2cm}
    \subfloat[The bridge-block tree of $G$, i.e., the reduced graph $G_{\calBB(G)}$ corresponding to the bridge-block decomposition of the network in (a)]{\makebox[1.2\width]{\includegraphics[scale=0.39]{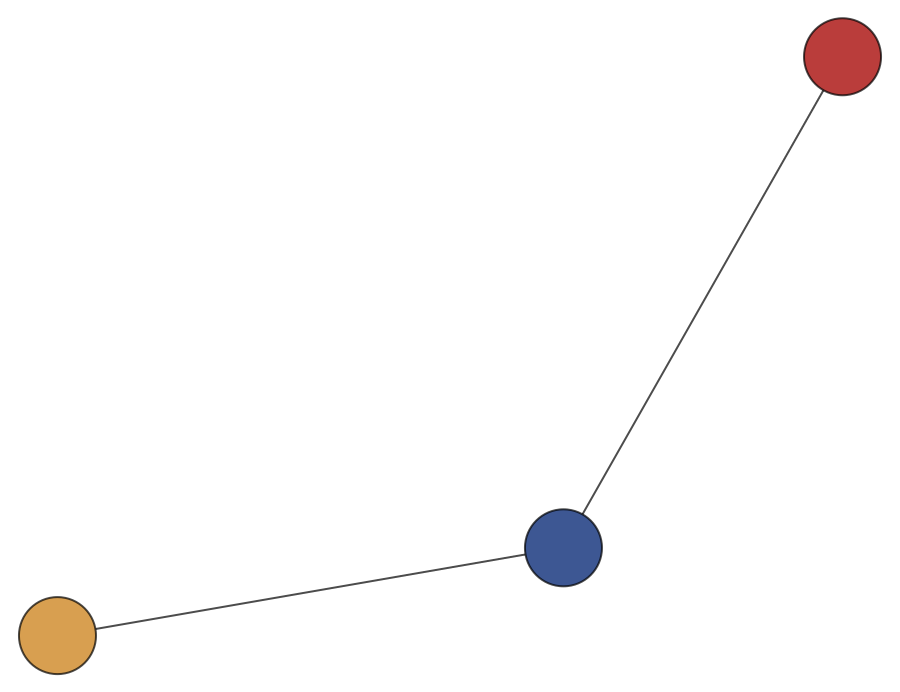}}}
    \vspace{-0.3cm}\\
    \subfloat[The biggest bridge-block of the network in (a) is split into $b=4$ clusters by solving an OBI-$4$ problem, obtaining a finer partition $\calP^*$. The cross-edges of $\calP^*$ inside the original bridge-block are highlighted in red.]{\makebox[1.2\width]{\includegraphics[scale=0.39]{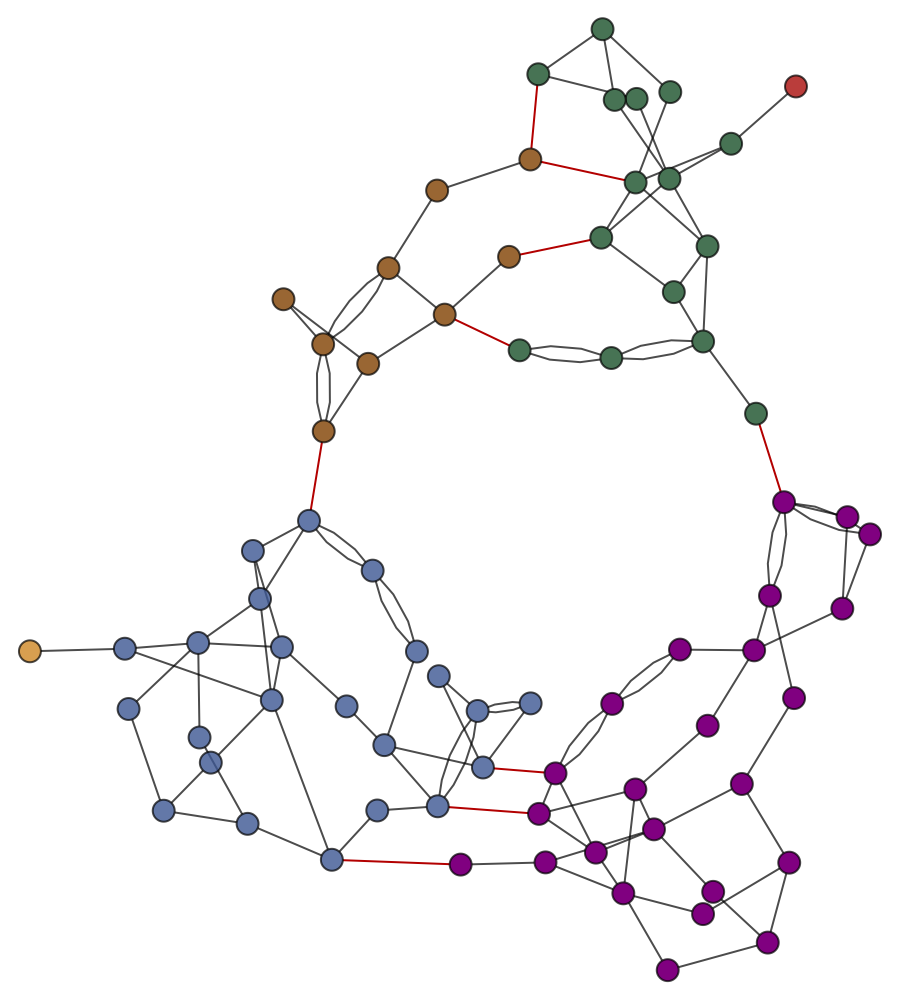}}}
    \hspace{2cm}
    \subfloat[The reduced graph $G_{\calP^*}$ corresponding to the optimal partition $\calP^*$ in (c), with the set $E_c(\calP^*)$ of its cross-edges highlighted in red. Each feasible set of switching actions corresponds to a spanning tree of $G_{\calP^*}$.]{\makebox[1.2\width]{\includegraphics[scale=0.39]{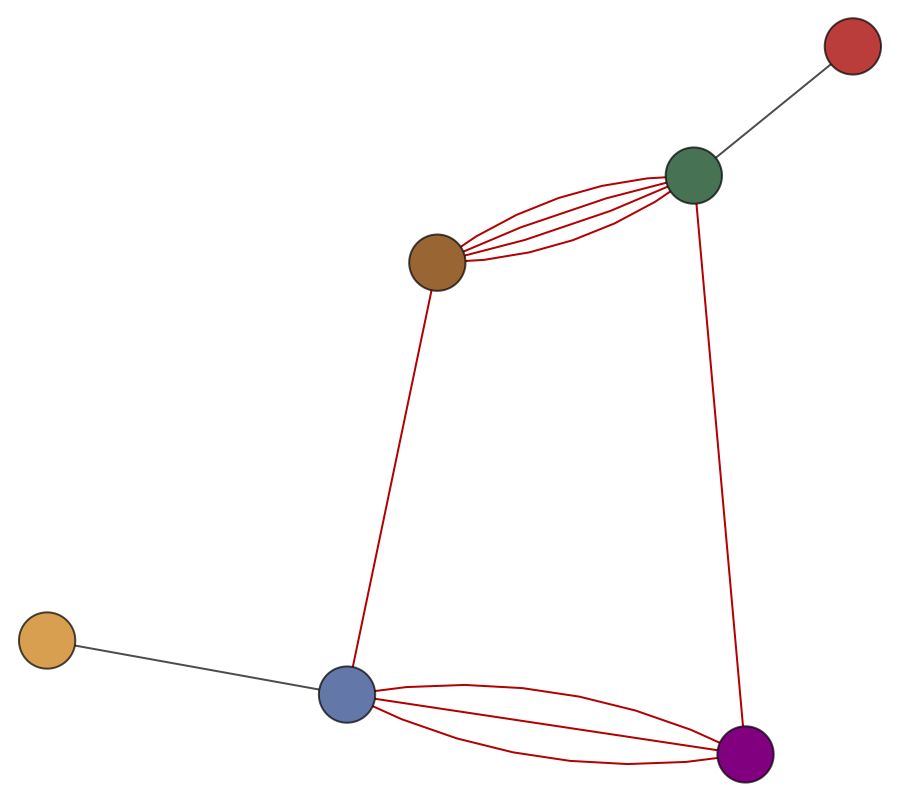}}}
    \vspace{-0.3cm}\\
    \subfloat[The network $G'$ obtained after switching off the optimal subset $\calE^*$ consisting of $6$ cross-edges. The vertex colors display its bridge-block decomposition $\calBB(G')$, which is finer than the original one $\calBB(G)$.]{\makebox[1.2\width]{\includegraphics[scale=0.39]{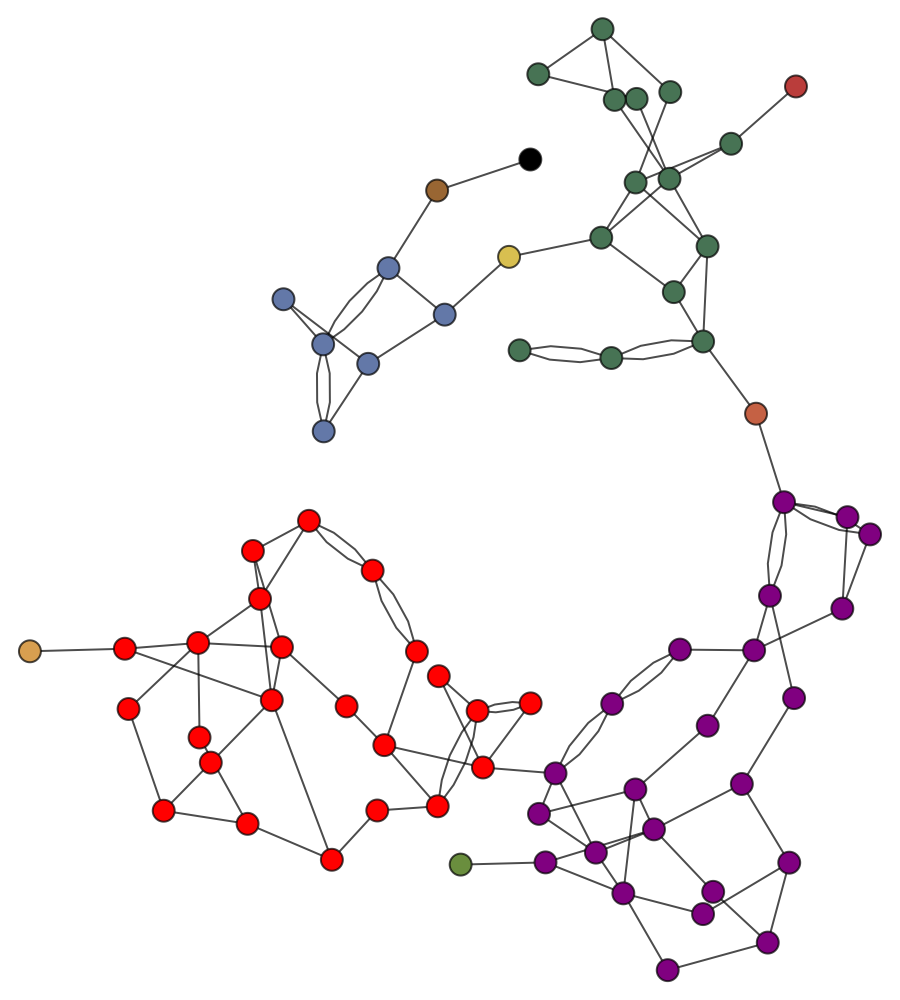}}}
    \hspace{2cm}
    \subfloat[The bridge-block tree of $G'$, i.e., the reduced graph $G'_{\calBB(G')}$ corresponding to the new bridge-block decomposition $\calBB(G')$ of the post-switching network $G'$ in (e).]{\makebox[1.2\width]{\includegraphics[scale=0.39]{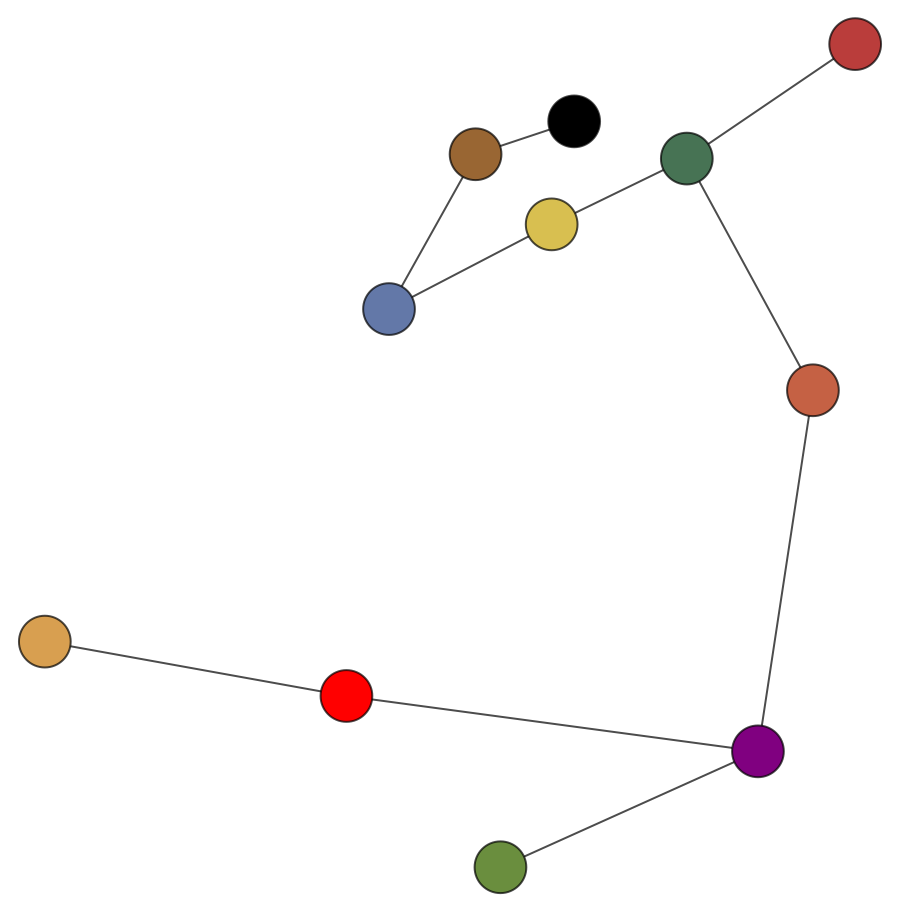}}}
    \caption{One-shot algorithm at work. Even if the goal was to split the biggest bridge-block into $b=4$ smaller ones, the algorithm returned a finer bridge-block decomposition consisting of $11$ bridge-blocks.}
    \label{fig:oneshotalgorithm}
\end{figure}
\FloatBarrier

Solving the one-shot bridge-block decomposition refinement presents several issues. Firstly, it is hard to determine a priori a reasonable target number $b$ of bridge-blocks, which is required as input of the algorithm: a small value would only yield a minor refinement of the network bridge-block decomposition, while a large one might yield only network with congested lines, i.e., $\gamma^* >1$.
Secondly, solving the OBS at the step 3 might be computationally very expensive. As mentioned earlier, the feasible sets $\calE \subset E_c(\calP)$ for the OBS problem~\eqref{eq:optswitch} are in one-to-one correspondence with the spanning trees of the reduced multi-graph $G_\calP$, which has at least $b$ nodes. 

Unfortunately, the OBS problem is not a minimum spanning tree problem, since the network congestion level appears in the objective function. For every spanning tree $T$ of $G_\calP$, we need to recalculate the power flows $\ff^\calE$ on the network $G^\calE$ obtained from the original one by removing the cross-edges that do not belong to that spanning tree, i.e., $\calE=E_c(\calP) \setminus T$. In view of this fact, the bottleneck of the above algorithm is precisely finding all such spanning trees of $G_\calP$. The time required to generate all spanning trees of a graph $G$ can be expressed as $O(g(b,|E_c(\calP)|) + \tau(G_\calP) h(b,|E_c(\calP)|)$ where $g,h$ are functions that are specific to the algorithm under consideration (see~\cite{Chakraborty2019} for a detailed overview), but it is largely dominated by $\tau(G_\calP)$, the number of spanning trees of $G_\calP$, that increases exponentially in the graph size $b$. 

This latter issue disappears if the selected target partition $\calP$ consists of only two clusters, say $\calV_1$ and $\calV_2$, as the reduced multi-graph $G_\calP$ has a number of spanning trees at most linear in the number of edges of the original network $G$. Indeed, in this case any spanning tree for $G_\calP$ consists of a single cross-edge in $E_c(\calP)$. Therefore, selecting a subset $\calE \subset E_c(\calP)$ of cross-edges between $\calV_1$ and $\calV_2$ feasible for the OBS problem is equivalent to choosing the unique cross-edge $e \in E_c(\calP)$ to keep, which become the unique bridge of the reduced graph $(G^\calE)_\calP$. Thus the number of feasible sets $\calE$ for the OBS problem is precisely equal to that of the cross-edges, and, trivially, $|E_c(\calP)| \leq |E|$.
Restricting ourselves to bi-partitions $\calP \in \Pi_2(G)$, the OBS problem~\eqref{eq:optswitch} becomes
\begin{equation}
	\min_{e \in E_c(\calP)} \gamma(E_c(\calP) \bs \{e\}) .
\label{eq:optswitch_tworegions}
\end{equation}
where the constraint~\eqref{eq:optswitch_constraint2} disappears because $(G^\calE)_\calP$ is always trivially a tree in this case. This optimization problem is much simpler and faster to solve than the original one: instead of optimizing over a complex combinatorial object, namely the family of spanning trees of the reduced graph $G_\calP$, the feasible set is just the collection of cross-edges $E_c(\calP)$.

This fact suggests the idea that it may be computationally more efficient to refine the bridge-block decomposition progressively, splitting the current largest bridge-block into two smaller bridge-blocks in a recursive fashion. We thus propose a second algorithm that leverages this idea. 
Besides drastically simplifying the structure of the OBS problem to be solved, this recursive approach embeds a natural stopping rule for the bridge-block decomposition refinement. As we hinted at already, it is impossible to estimate a priori how much the bridge-block decomposition of a power network can be refined without creating congestion on any line. In this respect, a procedure that increases the number of blocks by one at each step while monitoring the network congestion level can help find the best trade-off between the number of bridge-blocks and network congestion level. We introduce the following stopping rule. We keep refining the bridge-block decomposition splitting the largest bridge-block in a recursive fashion as long as:
\begin{itemize}
    \item[(i)] the current maximum number of splits is less than a predefined threshold $i_\mathrm{max} \in \N$, and
    \item[(ii)] the obtained network congestion level $\gamma^*$ is not larger than a threshold $\delta \in (0,1]$ describing the maximum tolerable congestion that the network operator can set.
\end{itemize}
The high-level structure of the proposed recursive procedure is shown in Algorithm~\ref{alg:recursiverefinement}.

\begin{algorithm}[!ht]
\SetAlgoLined
\SetKwInOut{Input}{Input}\SetKwInOut{Output}{Output}
\Input{A power network $G=(V,E,\bm{b})$, a maximum number of iterations $i_\mathrm{max}$, and a congestion threshold $\delta$}
\BlankLine
\DontPrintSemicolon
Set $i=0$;\\
Set $E_0=E$ and $G_0=(V,E_0)$;\\
Calculate the line flows $\ff_0$ and the current network congestion level $\gamma_0^*:=\gamma(\emptyset)$;\\
\While{$\gamma^*_{i} < \delta$ \textbf{\textup{and}} $i < i_\mathrm{max}$}{
Calculate the current bridge-block decomposition $\calBB(G_i)$;\\
Consider in isolation the largest bridge-block in size of the current bridge-block decomposition, say $\calV \in \calBB(G_i)$;\\
Solve a OBI-2 problem for $\calV$ in isolation to find its optimal bipartition $\calP^*_i$;\\
Solve the simplified OBS problem~\eqref{eq:optswitch_tworegions} for $\calV$ in isolation with target bipartition $\calP^*_i$, obtaining an optimal subset $\calE^*_i \subset E_c(\calP^*_i)$ and the new network congestion level $\gamma^*_i=\gamma(\calE^*_i)$;\\
Calculate the new set of active lines to be $E_{i+1} := E_{i} \setminus \calE^*_{i}$ and set $G_{i+1}=(V,E_{i+1})$;\\
Increase $i$ by $1$;\\
}
\Output{$i$, $G_i$, $\calBB(G_i)$, $\gamma^*_i$}
\caption{Recursive bridge-block decomposition refinement}\label{alg:recursiverefinement}
\end{algorithm}
Note that in Step 6 we select the current largest bridge-block to be further split, so that all the bridge-blocks progressively become smaller, indirectly ensuring that, when the algorithm stops, the sizes of the resulting bridge-blocks are similar. However, we could tweak the algorithm to select either (i) the bridge-block with the lowest congestion level (inside which is probably safer to switch off lines without creating overloads) or (ii) the one with the highest potential to be clustered in terms of modularity score. Both these approaches, however, could possibly lead to ineffective bridge-block decompositions, where there are still very large bridge-blocks and several small (trivial) ones.

The OPS problem needs to be solved several times in this recursive variant (and not only once as prescribed in the one-shot variant, Algorithm~\ref{alg:oneshotrefinement}), which means that Algorithm~\ref{alg:recursiverefinement} requires possibly many more power flows calculations. However, a remarkable feature of the recursive approach is that all these power flow redistribution calculations can be done ``locally'' (i.e., inside the bridge-block that is being split) and thus become progressively faster as the size of the largest bridge-block decreases. More specifically, let $\calV$ be the bridge-block selected to be further split in Step 6 of Algorithm~\ref{alg:recursiverefinement}. By virtue of Proposition~\ref{prop:glodf_non_cut}, any switching action (which is effectively equivalent to line failure in this context) inside the bridge-block $\calV$ does not affect the line flows in any other bridge-block $\calV' \neq \calV$ of the current bridge-block decomposition. Therefore, only the line flows inside $\calV_i$ needs to be recalculated and this can be done by solving the DC power flow equations~\eqref{eq:flowsdc_model} only for the subgraph induced by $\calV$ and the corresponding local power injections. The only caveat is that we need to adjust the net power at all the nodes of $\calV$ that are endpoints of bridges to account for the outgoing flow. 

Adopting this recursive splitting strategy, the bridge-blocks become smaller at every iteration, which in turn means that fewer flows (and congestion levels) need to be recalculated at each step when solving the OBS problem~\eqref{eq:optswitch_tworegions}, making it increasingly faster to solve.

The recursive structure also makes Step 10 of Algorithm~\ref{alg:recursiverefinement} faster: the bridge-block decomposition does not have to be recomputed globally, since all the bridge-blocks but the one considered in this iteration will be unchanged and we only need to identify the new bridge-blocks created locally. We remark that this step cannot be skipped as the optimal set of switch action may accidentally split the considered bridge-block into more than two bridge-blocks.

Figs.~\ref{fig:recursivealgorithm_1} and~\ref{fig:recursivealgorithm_2} visualize the various steps that  Algorithm~\ref{alg:recursiverefinement} takes when set with $i_\mathrm{max}=2$ and applied to the same network considered in Fig.~\ref{fig:oneshotalgorithm}, namely the IEEE 73-bus network. It is clear that the OBS problem is essentially trivial for the recursive algorithm in both steps, see~Figs.~\ref{fig:recursivealgorithm_1}(d) and~\ref{fig:recursivealgorithm_2}(b), while it was more involved for the one-shot algorithm, cf.~Fig.~\ref{fig:oneshotalgorithm}(d).

\begin{figure}[!ht]
    \centering
    \vspace{-0.5cm}
    \subfloat[The graph $G_0$ corresponding to IEEE 73 network with its bridge-block decomposition $\calBB(G_0)$ consisting of three bridge-blocks, two of which are trivial.]{\makebox[1.25\width]{\includegraphics[scale=0.39]{ieee73_1_original.png}}}
    \hspace{1cm}
    \subfloat[The bridge-block tree of $G_0$, i.e., the reduced graph $(G_0)_{\calBB(G_0)}$ corresponding to the bridge-block decomposition of the network in (a)]{\makebox[1.25\width]{\includegraphics[scale=0.39]{ieee73_1_bbd.png}}}
    \vspace{-0.2cm}\\
    \subfloat[The biggest bridge-block of the network in (a) is split into 2 clusters by solving a simplified OBI-$2$ problem, obtaining a finer partition $\calP^*_0$. The cross-edges of $\calP^*_0$ inside the original bridge-block are highlighted in red.]{\makebox[1.25\width]{\includegraphics[scale=0.39]{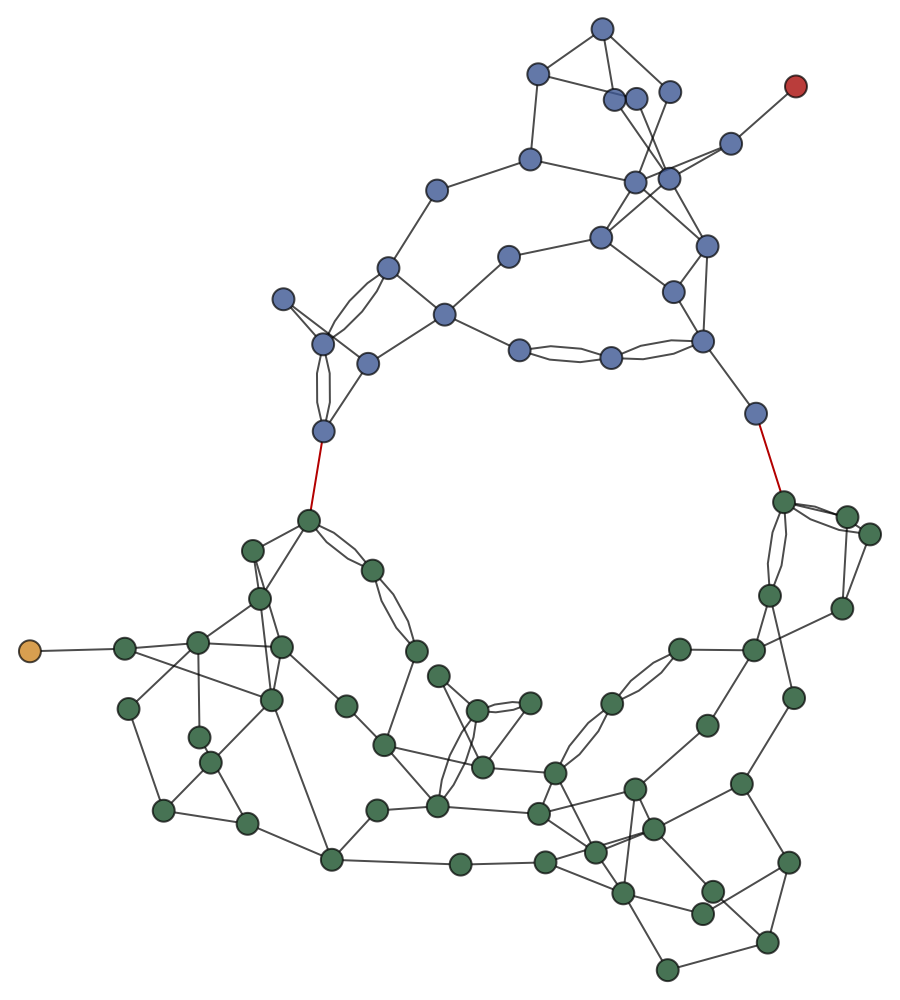}}}
    \hspace{1cm}
    \subfloat[The reduced graph $G_{\calP^*_0}$ corresponding to the optimal partition $\calP^*_0$ in (c), with the set $E_c(\calP^*_0)$ of its cross-edges highlighted in red. The spanning trees of $G_{\calP^*_0}$ (and thus the feasible switching actions) to consider do not grow exponentially with the size of $G_{\calP^*_0}$, being in one-to-one correspondence with $E_c(\calP^*_0)$.]{\makebox[1.25\width]{\includegraphics[scale=0.39]{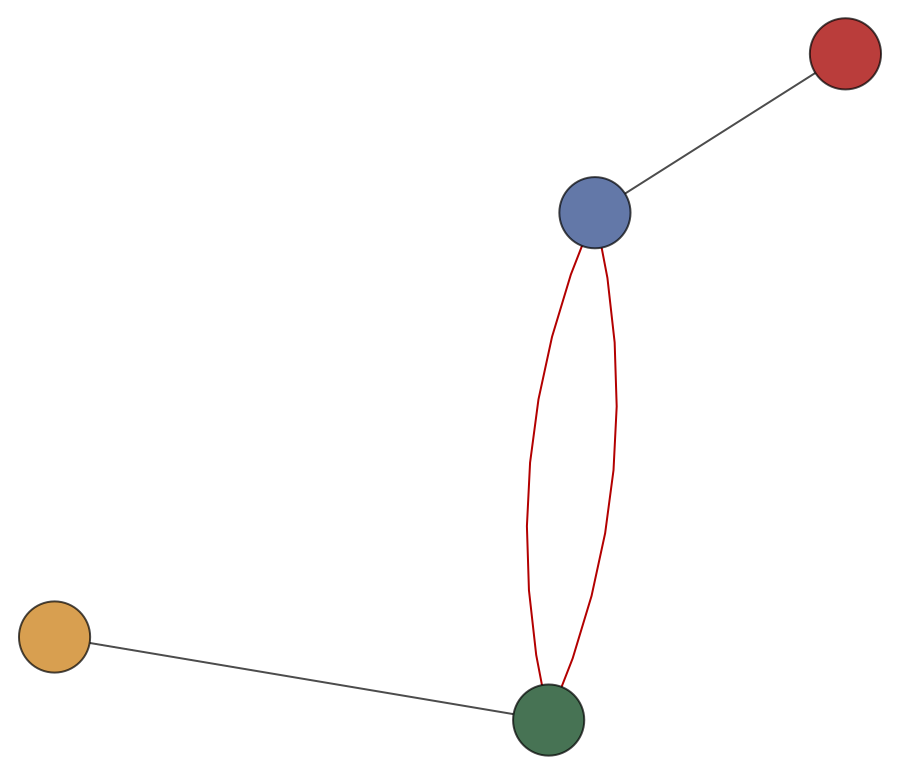}}}
    \vspace{-0.2cm}\\
    \subfloat[The network $G_1$ obtained after switching off the cross-edges in $\calE^*_0$. The vertex colors display its bridge-block decomposition $\calBB(G_1)$, which is finer than the original one $\calBB(G_0)$.]{\makebox[1.25\width]{\includegraphics[scale=0.39]{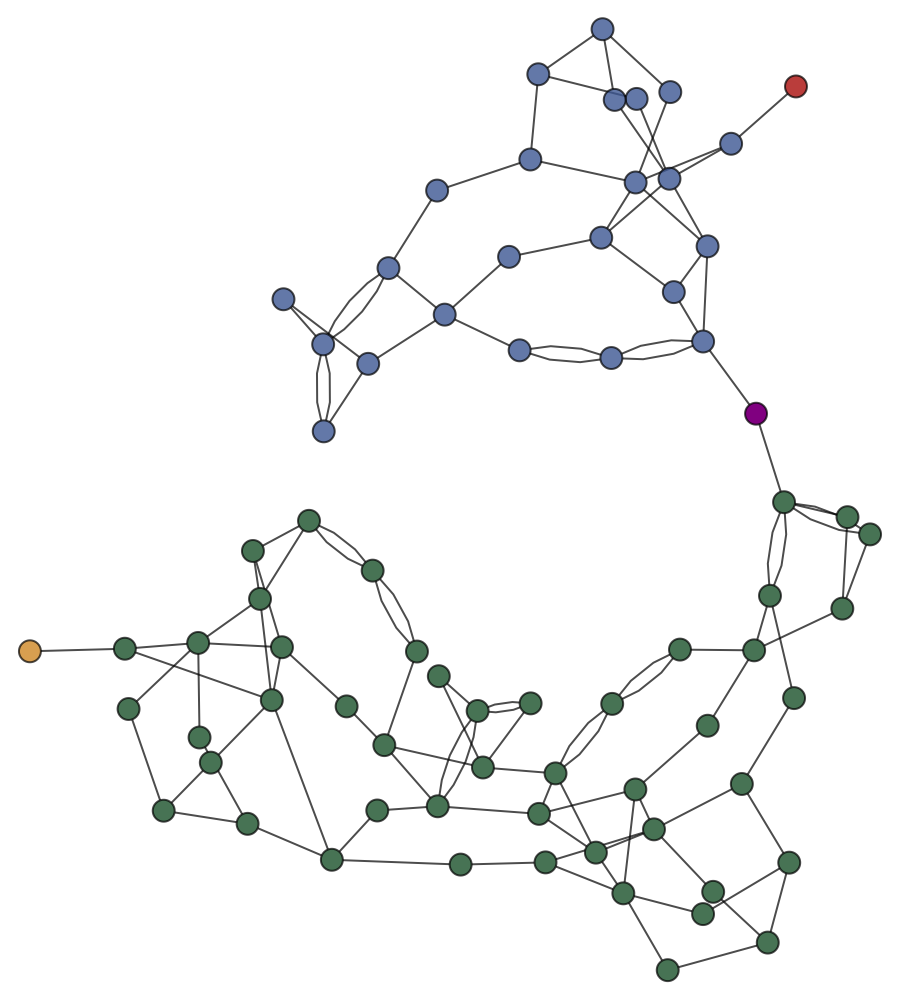}}}
    \hspace{1cm}
    \subfloat[The bridge-block tree of $G_1$, i.e., the reduced graph $(G_1)_{\calBB(G_1)}$ corresponding to the new bridge-block decomposition of the post-switching network $G_1$ in (e).]{\makebox[1.25\width]{\includegraphics[scale=0.39]{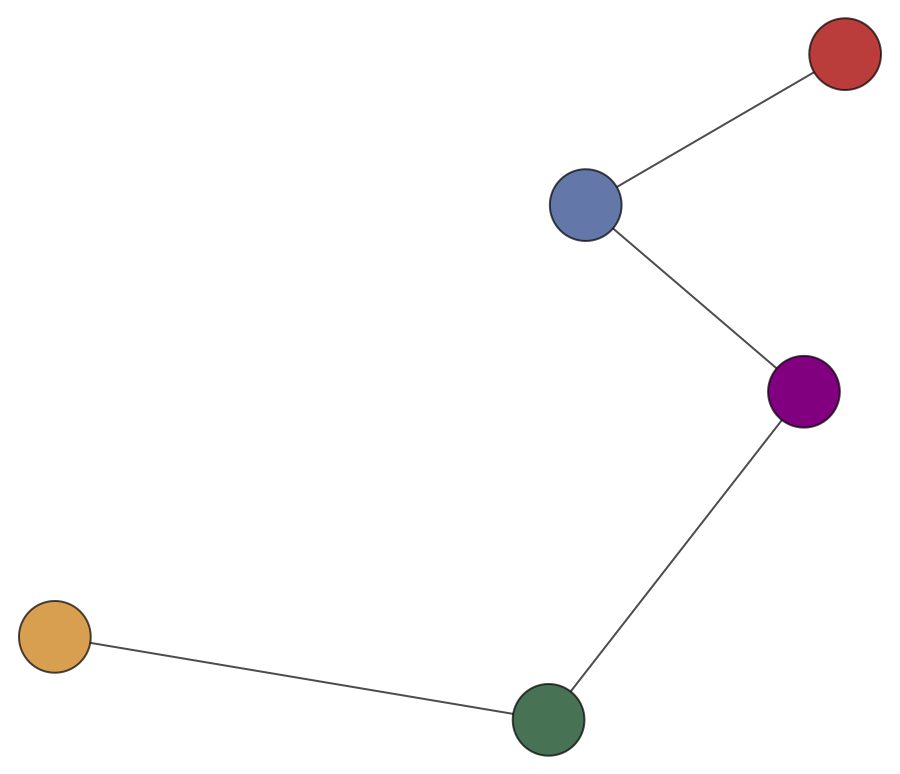}}}
    \caption{Recursive algorithm at work -- first iteration $i=1$}
    \label{fig:recursivealgorithm_1}
\end{figure}
\FloatBarrier
\begin{figure}[!h]
    \centering   
    \subfloat[The biggest bridge-block of the network $G_1$ in Fig.~\ref{fig:recursivealgorithm_1}(e) is split into 2 clusters by solving a simplified OBI-$2$ problem, obtaining a finer partition $\calP^*_1$. The cross-edges of $\calP^*_1$ inside the original bridge-block are highlighted in red.]{\makebox[1.25\width]{\includegraphics[scale=0.39]{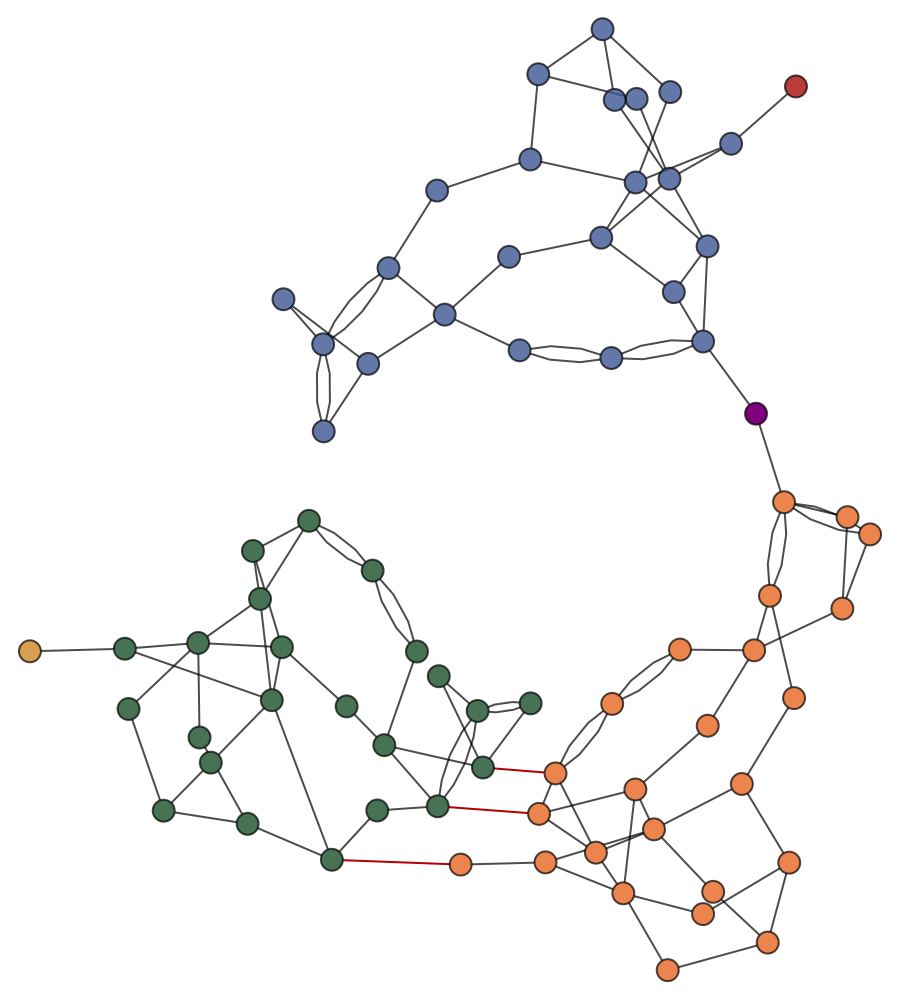}}}
    \hspace{1cm}
    \subfloat[The reduced graph $(G_1)_{\calBB(G_1)}$ corresponding to the bridge-block decomposition of the network in (a)]{\makebox[1.25\width]{\includegraphics[scale=0.39]{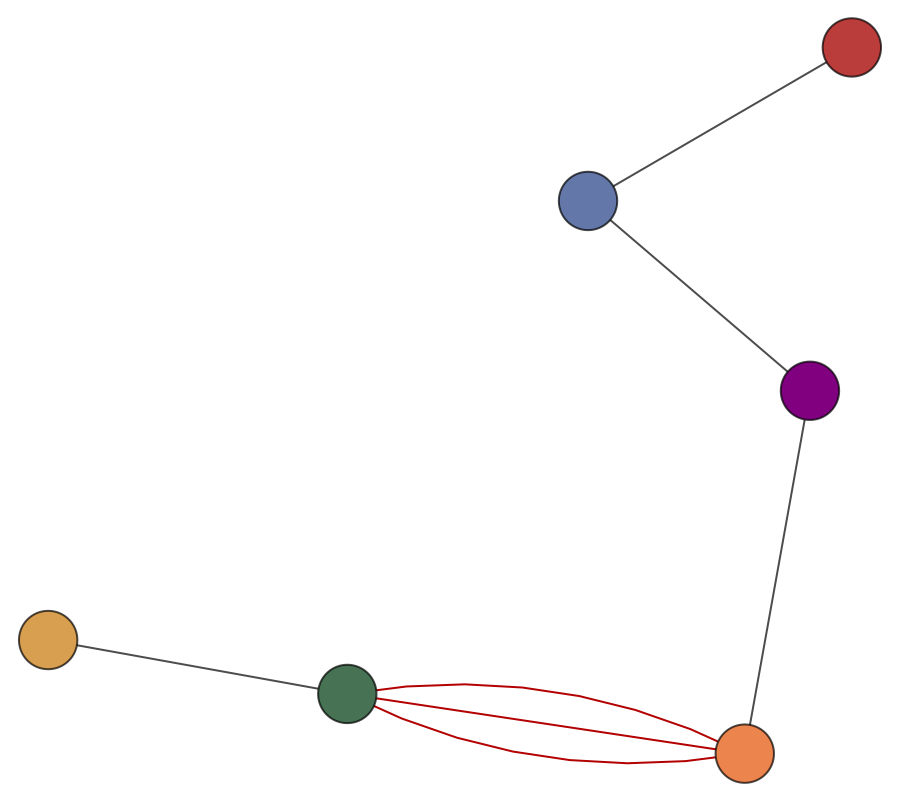}}}
    \\
    \subfloat[The network $G_2$ obtained after switching off the cross-edges in $\calE^*_1$. The vertex colors display its bridge-block decomposition $\calBB(G_2)$, which is finer than the previous one $\calBB(G_1)$]{\makebox[1.25\width]{\includegraphics[scale=0.39]{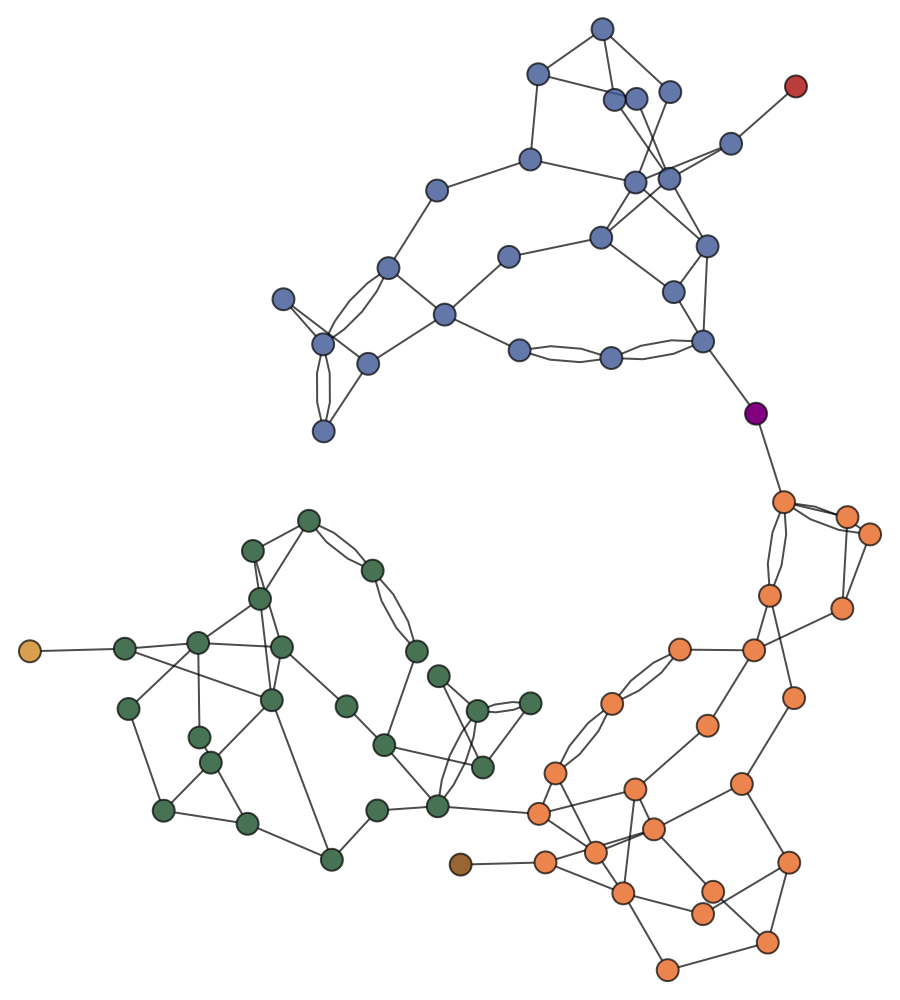}}}
    \hspace{1cm}
    \subfloat[The bridge-block tree of $G_2$, i.e., the reduced graph $(G_2)_{\calBB(G_2)}$ corresponding to the new bridge-block decomposition of the post-switching network $G_2$ in (c).]{\makebox[1.25\width]{\includegraphics[scale=0.39]{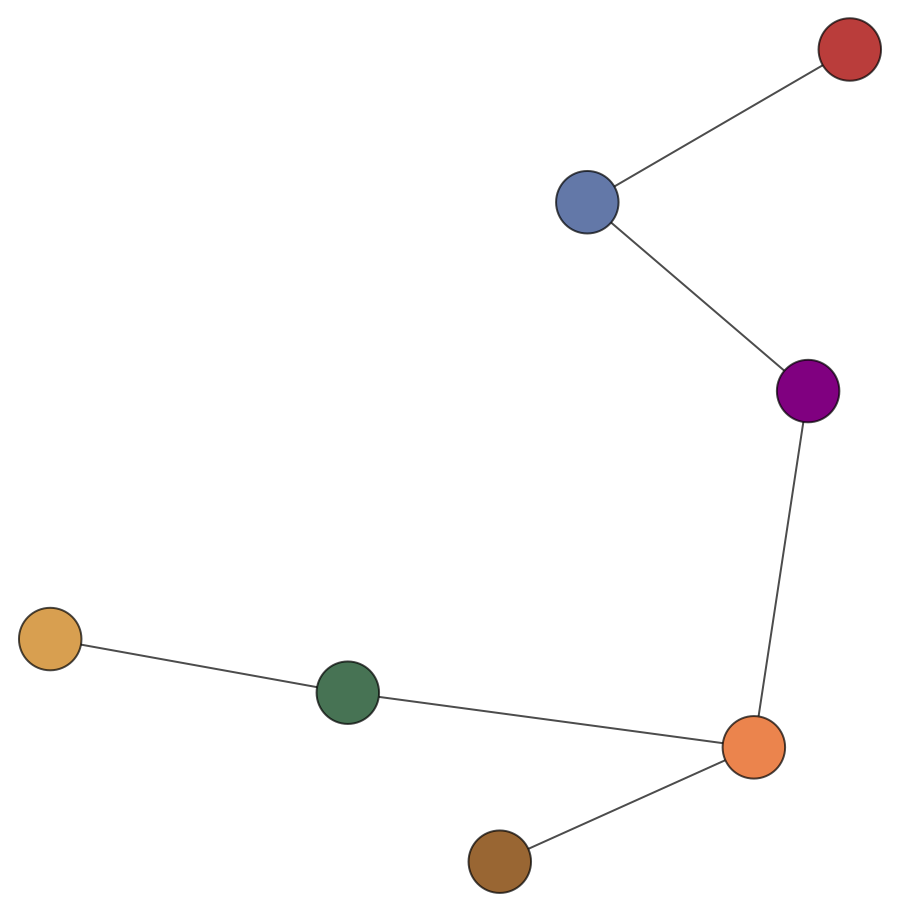}}}
    \caption{Recursive algorithm at work -- second iteration $i=2$}
    \label{fig:recursivealgorithm_2}
\end{figure}
\FloatBarrier
%
%
Both the one-shot and the recursive algorithms focus on the scenario where the power injections are fixed. However, in practical settings the network operator could \textit{jointly} reconfigure the network topology by means of switching actions \textit{and} adjust (some of) the power injections, either by shedding load or by adjusting generator levels. In this scenario, the OBS problem could potentially find a solution with an even lower network congestion level, but analyzing this joint optimization problem or proposing heuristics leveraging this additional injection flexibility is beyond the scope of this paper. 

We remark that one such possible strategy, in the spirit of~\cite{Liang2021}, is aiming for local power adjustments and only change the power injections at the endpoints of each line that is being switched off by an amount exactly equal to its original flow on that line. This strategy would guarantee that the power flows on the entire surviving network will be unchanged and no congestion will be created. However, there might not be enough flexibility in terms of injection adjustment in each pair of endpoints. Alternatively, one can solve a global OPF problem for the whole network after the switching actions, hence ensuring that no line will be overloaded. Even better results could be obtained when considering the line switching and power injection adjustment jointly in the same optimization problem. 
\FloatBarrier

\section{Numerical examples}
\label{sec:numerics}
This section is devoted to the implementation of the bridge-block decomposition refinement strategy introduced in Section~\ref{sec:optimization}. In Subsection~\ref{sub:networkanalysis} we first review  the structural properties of several power grids used in the literature as test networks, from which it becomes evident the need of finer and more homogeneous bridge-blocks for these networks. In Subsection~\ref{sub:modnum} we introduce two fast approximations algorithms to find locally optimal solutions for the OBI problem~\eqref{eq:maxmodprob} and compare their performance on IEEE test networks in terms of the clusters' size and cross-edges. Then, in Subsection~\ref{sub:switchingnum}, we compare the performances of the one-shot algorithm and of its recursive variant for the same test networks.


\subsection{Preliminary analysis of power systems topologies}
\label{sub:networkanalysis}
In order to show the potential benefits of applying our method for improving network reliability, it is important to first understand the relevant structural properties of typical power networks, in particular their bridge-block decomposition. In Table~\ref{tab:networkstatistics} below we report statistics about the original topologies of various standard test networks taken from~\cite{pglibopf,pglibopf_git}. For each of these networks, we find all its bridges and calculate its bridge-block decomposition. Modulo some minor variations, all the networks exhibit the same topological features, namely:
\begin{itemize}
    \item They have only one giant bridge-block that comprises a vast majority of the network nodes;
    \item They have substantial number of edges that are bridges (on average roughly $18\%$), each of them connecting a trivial bridge-block (consisting of one or two nodes) to the aforementioned giant bridge-block.
\end{itemize}
These features are also easily recognizable for the IEEE 118-bus network displayed in Fig.~\ref{fig:influencegraph_pre}. From these statistics it is clear that most power networks have an essentially trivial bridge-block decomposition and would thus perform very poorly in terms of failure localization. In particular, a line failure inside the unique giant bridge-block could propagate to roughly $80\%$ of the other lines (i.e., those in the same bridge-block).
Moreover, we note that all these networks already have a surprisingly large fraction (on average $\sim 18\%$) of lines that are bridges, which the switching actions prescribed by our method would increase only marginally.

\begin{table}[!ht]
    \centering
\begin{tabular}{c|r|r|C{2.35cm}|r|C{3.1cm}}
Test network case & \# of edges & \# of bridges & \% of edges that are bridges & \# of bridge-blocks &  Size(s) of non-trivial bridge-blocks \\
\hline \hline
case14\_ieee             &     20 &        1 &  5.00    &               2 &             13 \\
case30\_ieee             &     41 &        3 &  7.32    &               4 &             27 \\
case39\_epri             &     46 &       11 &  23.91   &               12 &             28 \\
case57\_ieee             &     80 &        1 &  1.25    &               2 &             56 \\
case73\_ieee\_rts        &    120 &        2 &  1.67    &               3 &             71 \\
case89\_pegase           &    210 &       16 &  7.62    &               17 &             73 \\
case118\_ieee            &    186 &        9 &  4.84    &               10 &             109 \\
case162\_ieee\_dtc       &    284 &       12 &  4.22    &             13 &             150 \\
case179\_goc             &    263 &       43 &  16.35   &             44 &             136 \\
case200\_activ           &    245 &       72 &  29.39   &             73 &             128 \\
case240\_pserc           &    448 &       58 &  12.95   &             59 &             182 \\
case300\_ieee            &    411 &       89 &  21.65   &             90 &             206, 3, 3 \\
case588\_sdet            &    686 &      229 &  33.38   &            230 &             357 \\
case793\_goc             &    913 &      290 &  31.76   &            291 &             500 \\
case1354\_pegase         &   1991 &      561 &  28.18   &            562 &             791 \\
case1888\_rte            &   2531 &      964 &  38.09   &            965 &             918, 5 \\
case2000\_goc            &   3633 &      445 &  12.25   &          446 &             1555 \\
case2736sp\_k            &   3273 &      627 &  19.16   &            628 &             2109 \\
case2737sop\_k           &   3274 &      627 &  19.15   &            628 &             2110 \\
case2746wp\_k            &   3281 &      637 &  19.41   &          638 &             2109 \\
case2746wop\_k           &   3309 &      607 &  18.34   &           608 &             2139 \\
case2848\_rte            &   3776 &     1410 &  37.34   &           1411 &             1421, 7, 5, 3 \\
case2869\_pegase         &   4582 &      778 &  16.98   &            779 &             2088 \\
case3120sp\_k            &   3693 &      731 &  19.79   &            732 &             2382, 8 \\
case3375wp\_k            &   4161 &      826 &  19.85   &            827 &             2536, 3 \\
case9241\_pegase         &  16049 &     1665 &  10.37   &           1666 &             7558, 7, 5, 3
\end{tabular}
    \caption{Summary of bridge and bridge-block statistics for various test networks. A bridge-block is considered to be non-trivial is it consists of strictly more than two nodes.}
    \label{tab:networkstatistics}
\end{table}

\subsection{Bridge-block candidate identification}
\label{sub:modnum}
The OBI problem that we introduced in Section~\ref{sub:mod} to find good candidate bridge-blocks is a variant of the well-known weighted network modularity maximization. However, this optimization problem cannot be solved exactly, being NP-hard, especially for large power networks and we need to resort to locally optimal partition. Before presenting our numerical findings, we first briefly review the two approximation algorithms that we implemented, namely the \textit{fastgreedy method} and the \textit{spectral clustering}.\\

\textbf{Fastgreedy.} A very fast approximation algorithm for optimizing modularity on large networks was proposed by~\cite{Clauset2004}. It uses a bottom-up hierarchical approach that tries to optimize the modularity in a greedy manner. Initially, every vertex belongs to a separate community, and communities are merged iteratively such that each merger is locally optimal (i.e., yields the largest increase in the current value of modularity). We use the Python library $\mathrm{igraph}$ and the built-in implementation of $\mathrm{fastgreedy}$ modularity optimization algorithm described in~\cite{Clauset2004}, which also implements some improvements proposed by~\cite{Wakita2007}.\\

\textbf{Spectral clustering.} Spectral clustering was first used in the context of modularity maximization by~\cite{Newman2006a,Newman2006b} and has become one of the most popular modern clustering algorithms, being simple to implement and very efficient to solve using standard linear algebra software. In our implementation of this method we use two matrices that simultaneously capture the network structure and flow pattern, the modularity matrix and the weighted Laplacian matrix. They can be both calculated from the graph $G=(V,E)$ with edge weights equal the absolute power flows $\{|f_{ij}|\}_{(i,j)\in \calE}$ as follows:
\begin{itemize}
    \item the \textit{modularity matrix} $B$, whose $(i,j)$ entry is $B_{i,j}= |f_{ij}|-\frac {F_{i} F_{j}}{2 M}$, and
    \item the \textit{weighted Laplacian matrix} $L$, whose entries are $L_{i,j}= -|f_{ij}|$ if $i\neq j$ and $L_{i,i}=-\sum_{j \neq i} L_{i,j}$.
\end{itemize} 
In both cases we choose to work with their normalized counterparts, namely
$$B_n := W^{-1/2} B W^{-1/2} \qquad \text{ and } \qquad L_n := W^{-1/2} L W^{-1/2},$$
where $W=\mathrm{diag}(F_1,\dots,F_n)$, since, especially in graphs of large size or with very heterogeneous weights, it has been argued that the spectral clustering method is more effective and computationally more stable when using the normalized version of these matrices, see~\cite{Bolla2011,vonLuxburg2007}.

The spectral clustering performed on the normalized Laplacian matrix $L_n$ approximates the optimal solution of the NP-hard problem known as \textit{Normalized Cut (NCut)}. Working with the normalized modularity matrix $B_n$ means that the spectral clustering method approximates the partition that maximizes a renormalized version $Q_n$ of the network modularity, namely
\[
	Q_n(\calP) := \frac {1}{2 M} \sum_{r=1}^{|\calP|} \frac{1}{\sum_{k \in \calV_r} |F_{k}|} \sum \limits _{i,j \in \calV_r}  \left ( |f_{ij}| - \frac {F_{i} F_{j}}{2 M} \right ).
\]
If we fix the target number $b$ of clusters the optimal normalized modularity clustering is equal to the normalized cut clustering, in the sense that $\arg\max_\calP Q_n(\calP)= \arg\min_\calP \mathrm{NCut}(\calP)$. This follows from the identity
\[
	Q_n(\calP) = \frac{b-1}{2M} - \frac{1}{2M} \mathrm{NCut}(\calP),
\]
proved in~\cite{Bolla2011,YuDing2010} under the assumption that the target number $b$ of clusters is fixed. This means that the spectral clustering method performed on either matrix $B_n$ and $L_n$ solve approximately the same optimization problem. However, having different eigensystems, they often yield to slightly different results. For this reason, we consider both of these two variants of the spectral clustering approximation method in the following subsection and label them as \textit{Spectral $L_n$} and \textit{Spectral $B_n$}.

We remark that the authors in~\cite{Bialek2014} also adopt the normalized-Laplacian-based spectral clustering to power systems, proposing several choices for the edge weights, including the one we adopted in this paper, i.e., the absolute value of power flows. In the same paper, some heuristics are outlined to decide the optimal number of clusters in which to partition the network, but these leverage the eigensystem structure of the Laplacian matrix, rather than being informed by a specific target number of clusters $b$ like in our method.

\subsubsection{Performance comparison}

We compare the performance of the three aforementioned clustering methods (FastGreedy, Spectral $L_n$ and Spectral $B_n$) to optimally partition some standard IEEE test cases available in the pglib-opf library~\cite{pglibopf,pglibopf_git}. We consider four networks of various sizes, ranging from small (39 nodes) to large (2737 nodes). 

We first consider the quality of the network partitions returned by three methods by looking exclusively at graph-related quantities. In the following tables, we report for different target number $b$ of clusters the normalized modularity score $Q_n$ (the higher the better) of the partition returned by each algorithm, as well as the sizes of the $b$ clusters and the fraction of edges that are cross-edges between them. We further report the number of cross-edges to be switched off to obtained a bridge-block decomposition with the identified clusters as bridge-blocks. Recall that, as we argued in Section~\ref{sub:mod}, it is highly desirable to have a small number of cross-edges between the identified clusters, since it dramatically reduces the feasible sets of switching actions for the OBS problem.

\begin{table}[!ht]
\centering
\begin{tabular}{c|C{1.1cm}|r|C{0.8cm}|C{1.7cm}|C{2cm}|C{3.8cm}}
\toprule
Clustering method & runtime (sec) & $b$ & $Q_n$ & fraction of cross-edges & \#lines to be switched off & resulting partition of the largest bridge-block\\
\hline \hline
Spectral $L_n$  &   0.0192 &  2 &   0.379 &               0.0869 &                    3 &                   [10, 18] \\
Spectral $B_n$ &   0.0186 &  2 &    0.406 &               0.0652 &                    2 &                   [11, 17] \\
FastGreedy  &  0.0016 &  2 &    0.406 &               0.0652 &                    2 &                   [11, 17] \\
\hline
Spectral $L_n$  &   0.0247&  3 &   0.486 &                0.152 &                    5 &                 [9, 9, 10] \\
Spectral $B_n$ &    0.0210 &  3 &   0.501 &                0.130 &                    4 &                 [8, 9, 11] \\
FastGreedy  &  0.0018 &  3 &   0.524 &                0.109 &                    3 &                [7, 10, 11] \\
\hline
Spectral $L_n$  &   0.0306 &  4 &   0.513 &                0.173 &                    5 &               [4, 7, 8, 9] \\
Spectral $B_n$ &   0.0291 &  4 &   0.527 &                0.196 &                    5 &               [4, 7, 8, 9] \\
FastGreedy  &  0.0027 &  4 &   0.535 &                0.152 &                    4 &              [4, 7, 7, 10] \\
\end{tabular}
\caption{39 EPRI}
\end{table}

\begin{table}[!ht]
\centering
\begin{tabular}{c|C{1.1cm}|r|C{0.8cm}|C{1.7cm}|C{2cm}|C{3.8cm}}
Clustering method & runtime (sec) & $b$ & $Q_n$ & fraction of cross-edges & \#lines to be switched off & resulting partition of largest bridge-block\\
\hline \hline
Spectral $L_n$  &    0.0255 &  2 &   0.220 &               0.032 &                    5 &                   [28, 81] \\
Spectral $B_n$ &   0.0294 &  2 &   0.390 &               0.032 &                    5 &                   [40, 69] \\
FastGreedy  &  0.0032 &  2 &   0.427 &               0.075 &                   13 &                   [50, 59] \\
\hline
Spectral $L_n$  &    0.0338 &  3 &   0.490 &               0.053 &                    8 &               [27, 33, 49] \\
Spectral $B_n$ &   0.0341 &  3 &   0.518 &               0.059 &                    9 &               [29, 38, 42] \\
FastGreedy  &  0.0061 &  3 &   0.527 &                0.102 &                   17 &               [16, 34, 59] \\
\hline
Spectral $L_n$  &  0.0366 &  4 &    0.498 &               0.075 &                   11 &            [9, 19, 33, 48] \\
Spectral $B_n$ &  0.0477 &  4 &   0.540 &                0.102 &                   16 &           [18, 21, 29, 41] \\
FastGreedy  &   0.0381 &  4 &   0.620 &                0.140 &                   23 &           [16, 27, 32, 34] \\
\end{tabular}
\caption{IEEE 118}
\end{table}

\begin{table}[!ht]
\centering
\begin{tabular}{c|C{1.1cm}|r|C{0.8cm}|C{1.7cm}|C{2cm}|C{3.8cm}}
Clustering method & runtime (sec) & $b$ & $Q_n$ & fraction of cross-edges & \#lines to be switched off & resulting partition of largest bridge-block\\
\hline \hline
Spectral $L_n$  &   0.912 &  2 & 0 &             0.0008 &                    1 &                   [1, 917] \\
Spectral $B_n$ &  0.844 &  2 &     0.483 &               0.0107 &                   26 &                 [391, 527] \\
FastGreedy  &  0.443 &  2 &      0.489&               0.0130 &                   32 &                 [415, 503] \\
\hline
Spectral $L_n$  &   7.536 &  3 &   0.610 &               0.0217 &                   51 &            [168, 356, 394] \\
Spectral $B_n$ &   1.201 &  3 &   0.648 &               0.0194 &                   46 &            [273, 308, 337] \\
FastGreedy  &  0.461 &  3 &   0.617 &                0.0198 &                   48 &            [205, 298, 415] \\
\hline
Spectral $L_n$  &  8.167 &  4 &     0.709 &               0.0221 &                   52 &       [187, 189, 222, 320] \\
Spectral $B_n$ &  5.693 &  4 &   0.709 &               0.0205 &                   48 &       [157, 204, 261, 296] \\
FastGreedy  &  1.028 &  4 &   0.724 &               0.0217 &                   52 &       [153, 205, 262, 298] \\
\end{tabular}
\caption{1888rte}
\end{table}

\begin{table}[!ht]
\centering
\begin{tabular}{c|C{1.1cm}|r|C{0.8cm}|C{1.7cm}|C{2cm}|C{3.8cm}}
Clustering method & runtime (sec) & $b$ & $Q_n$ & fraction of cross-edges & \#lines to be removed & resulting partition of largest bridge-block\\
\hline \hline
Spectral $L_n$  &   3.949 &  2 &   0.375 &              0.0076 &                   24 &                [674, 1436] \\
Spectral $B_n$ &    2.702 &  2 &    0.379 &               0.0058 &                   18 &                [690, 1420] \\
FastGreedy  &  0.606 &  2 &   0.461 &               0.0489 &                  159 &               [1000, 1110] \\
\hline
Spectral $L_n$  &   4.394 &  3 &   0.640 &               0.0150 &                   47 &            [659, 698, 753] \\
Spectral $B_n$ &   2.837 &  3 &   0.640 &               0.0159 &                   50 &            [667, 702, 741] \\
FastGreedy  &  0.987 &  3 &   0.632 &               0.0516 &                  167 &           [410, 590, 1110] \\
\end{tabular}
\caption{Polish 2737sop}
\end{table}
\FloatBarrier

In the vast majority of the considered cases, the two spectral clustering methods yield partitions with smaller values of the normalized modularity $Q_n$ with respect to the FastGreedy method, which, however, consistently returns partitions with a larger number of cross-edges than the other two methods. In our experiments, the FastGreedy method is considerably (2x-10x times) faster than the two spectral clustering methods. 

FastGreedy has the advantage that all the partitions for various $b$ are calculated jointly because the algorithm is nested in that each partition of size $b$ is obtained from the corresponding one of size $b-1$ by optimally dividing one of the clusters into two. This recursive nature of the FastGreedy method makes it faster, but at times it means that partitions with unbalanced clusters sizes could be returned. On the other hand, the two spectral clustering methods almost always identify partitions with clusters of balanced size.

In the cases considered above, all methods always yield partitions with exactly $b$ connected clusters. It is possible that the actual number of identified \textit{connected} clusters could be strictly larger than $b$ because some of them induce network subgraphs that are disconnected, violating our assumption (A1). This occurs very rarely as the OBI objective function severely penalizes such partitions, but even so it would not be an issue as it would lead to a finer bridge-block decomposition. 


\subsection{Performance of one-shot and recursive algorithms}
\label{sub:switchingnum}

We first illustrate the numerical results for the one-shot algorithm, i.e., Algorithm~\ref{alg:oneshotrefinement}, for various test networks. 
More specifically, for each network we solve the OBS problem~\eqref{eq:optswitch_basic} three times, using the (possibly different) optimal partition $\calP$ into $b$ clusters returned by each of the three different clustering algorithms (cf.~Subsection~\ref{sub:modnum}). 
Recall that there is a one-to-one correspondence between the feasible subsets of cross-edges $\calE \subset E_c(\calP)$ and the spanning trees of the reduced multi-graph $G_\calP$. This means that the one-shot algorithm cannot be solve exactly already for partitions into $b \geq 5$ clusters, since the spanning trees of the reduced multi-graph grow exponentially with $b$. However, if we consider a small number of target clusters, say $b\leq 4$, we can still solve the OBS exactly by considering all possible spanning tree of $G_\calP$.

As pre-processing step, for each of the considered networks we first run a DC-OPF to obtain the power flow configuration and calculate the initial network congestion level $\gamma(\emptyset)$  (which is possibly equal to $1$ if one or more line limits are active for the OPF solution). We then run Algorithm~\ref{alg:oneshotrefinement} with $b=4$ using the three different clustering methods discussed in the previous section. Table~\ref{tab:one-shot_results} below reports for each network both the initial congestion level $\gamma(\emptyset)$ and the results of the one-shot algorithm ran with $b=4$ as target number of clusters. More specifically, for each network and each clustering algorithm, we report
\begin{itemize}
    \item the runtime,
    \item the number of spanning trees of $G_\calP$ considered,
    \item the optimal post-switching network congestion level $\gamma(\calE^*)$,
    \item the number of lines that to be switched off for the optimal set of switching actions $\calE^*$ and which fraction they represent on the total number of network lines.
\end{itemize}

\begin{table}[!ht]
    \centering
    \begin{tabular}{p{1.85cm}|p{2.1cm}|p{1.15cm}|p{1.6cm}|p{1cm}|p{1.85cm}|p{1.8cm}|p{1.8cm}}
network & partition method &  runtime (sec) & \# spanning trees &  $\gamma(\calE^*)$ & \# congested lines & \# of lines switched off & $\%$ of lines switched off\\
\hline \hline
\multirow{4}{*}{case39\_epri} 
& \multicolumn{2}{l}{initial network configuration} & - & 1.000 & 2 & - & -\\
\cline{2-8}
& Spectral $L_n$  &  0.238 &             10 &      0.960 &                0 &              3 &            6.52 \\
& Spectral $B_n$ &   0.270 &             10 &      0.960 &                0 &              3 &            6.52 \\
& \textbf{FastGreedy}  &  0.245 &             12 &  \textbf{0.833} &                \textbf{0} &              3 &            6.52 \\
\hline
\hline
\multirow{4}{*}{case57\_ieee} 
& \multicolumn{2}{l}{initial network configuration} & - & 0.938 & 0 & - & -\\ 
\cline{2-8}
& Spectral $L_n$  &   6.210 &            220 &   1.050 &                2 &             13 &            16.25 \\
& Spectral $B_n$ &   5.454 &            216 &   1.078 &                2 &             12 &            15.00 \\
& \textbf{FastGreedy}  &  6.829 &            256 &  \textbf{0.921} &                \textbf{0} &             14 &            17.50 \\
\hline
\hline
\multirow{4}{*}{case73\_ieee}
& \multicolumn{2}{l}{initial network configuration} & - & 0.632 & 0 & - & -\\
\cline{2-8}
& Spectral $L_n$  &   1.330 &             49 &   1.171 &                1 &              8 &           6.66 \\
& Spectral $B_n$ &   2.010 &             71 &   1.171 &                3 &             10 &           10.83 \\
& \textbf{FastGreedy}  &  0.970 &             31 &  \textbf{0.723} &                \textbf{0} &              6 &           5.00 \\
\hline
\hline
\multirow{4}{*}{case118\_ieee} 
& \multicolumn{2}{l}{initial network configuration} & - & 1.000 & 2 & - & - \\
\cline{2-8}
& \textbf{Spectral} $L_n$ &  1.912 &             80 &    \textbf{1.004} &                \textbf{2} &             10 &           5.38 \\
& Spectral $B_n$ &   2.259 &             80 &  1.016 &                3 &             10 &           5.38 \\
& FastGreedy  &  7.958 &            264 &  2.248 &                8 &             18 &           9.68 \\
\hline
\hline
\multirow{4}{*}{case179\_goc} 
& \multicolumn{2}{l}{initial network configuration} & - & 1.000 & 4 & - & -\\
\cline{2-8}
& Spectral $L_n$  &  1.699 &             54 &  1.243 &                7 &              7 &           2.66 \\
& Spectral $B_n$ &  10.358 &            312 &  1.035 &                2 &             13 &           4.94 \\
& \textbf{FastGreedy}  &  2.103 &             69 &  \textbf{1.000} &                \textbf{1} &              9 &            3.42 \\
\hline
\hline
\multirow{4}{*}{case200\_activ} 
& \multicolumn{2}{l}{initial network configuration} & - & 0.708 & 0 & - & -\\
\cline{2-8}
& Spectral $L_n$  &  6.541 &            196 &   0.591 &                0 &             15 &           6.12 \\
& Spectral $B_n$ &  9.146 &            312 &   0.591 &                0 &             16 &           6.53 \\
& \textbf{FastGreedy}  &  6.863 &            208 &   \textbf{0.591} &                \textbf{0} &             12 &           4.90 \\
\hline
\hline
\multirow{4}{*}{case300\_ieee} 
& \multicolumn{2}{l}{initial network configuration} & - & 1.000 & 11 & - & -\\
\cline{2-8}
& Spectral $L_n$  &  3.610 &            120 &  1.220 &                7 &             12 &           2.92 \\
& \textbf{Spectral} $B_n$ &  14.275 &            468 &  \textbf{1.058} &                \textbf{5} &             14 &           3.41 \\
& FastGreedy  &  37.212 &           1112 &  1.161 &                6 &             24 &           5.84 \\
\hline
\hline
    \end{tabular}
    \caption{Results obtained by the one-shot algorithm with $b=4$ target clusters using three different methods for the candidate bridge-blocks identification. We highlight in bold the best method and the resulting post-switching congestion $\gamma(\calE^*)$ for each network.}
    \label{tab:one-shot_results}
\end{table}
\FloatBarrier
From Table~\ref{tab:one-shot_results} we deduce that the resulting post-switching network congestion level varies a lot depending on the test network and partition method. In particular, in a few instances the maximum congestion is strictly bigger than $1$, but when considering all three methods, there is always at least one solution that does not increase the congestion significantly. Furthermore, in many instances the post-switching network congestion level even decreases with respect to the initial configuration and so does the number of congested lines. 

We did not run the one-shot algorithm on larger networks, as it would have required a substantially (exponentially) larger amount of time. This is not a surprise, in view of the discussion in Section~\ref{sub:pra} regarding the feasible region of the OBS problem. Indeed, the runtime of Algorithm~\ref{alg:oneshotrefinement} does not depend on the network size per-se, but rather on the the number of spanning trees of the reduced graph $G_\calP$ corresponding to the optimal partition $\calP$ identified by each of the three methods. This makes the runtime of the one-shot algorithm rather unpredictable, discouraging its usage in real time.


Let us now focus on the performance of the recursive algorithm, i.e.~Algorithm~\ref{alg:recursiverefinement}. In Table~\ref{tab:recursive_results} below, we report the results for the same test networks considered in Table~\ref{tab:one-shot_results} s well as a few more networks of larger size. Indeed, the running time of the recursive algorithm depends solely on the number of spanning trees that needs to be considered in the OBS problem. By design, for the recursive algorithm this number is in most cases very small even for large networks, as we can deduce from the second last column of Table~\ref{tab:recursive_results}. Indeed for the recursive algorithm the number of considered spanning trees is equal to the number of cross-edges of the selected bipartition, which is in turn equal to one plus the number of lines switched off.

For each test network, we run the recursive algorithm with $i_\mathrm{max}=3$, reporting the runtime of each of these three iterations. This choice for $i_\mathrm{max}$ allows us to make a fair comparison with the one-shot algorithm with a target of $b=4$ clusters, since we expect the same number of resulting bridge-blocks (modulo ``spontaneous'' splits in smaller bridge-blocks).

\begin{table}[!ht]
    \centering
    \begin{tabular}{p{1.9cm}|p{3.2cm}|p{1.25cm}|p{2cm}|p{1.85cm}|p{1.8cm}|p{1.8cm}}
network & iteration &  runtime (sec) &  $\gamma(\calE^*)$ & \# congested lines & \# of lines switched off & $\%$ of lines switched off\\
\hline \hline
\multirow{5}{*}{case39\_epri} 
& initial configuration & - &  $\gamma(\emptyset) =1.000$ & 2 & - & -\\
\cline{2-7}
& 1 &   0.091 &                0.794 &                0 &              2 &                4.35 \\
& 2 &   0.069 &                0.833 &                0 &              1 &                2.17 \\
& 3 &  0.082 &                0.833 &                0 &              1 &                2.17 \\
\cline{2-7}
& final &  0.242 &  \textbf{0.833} & \textbf{0} & \textbf{4} & \textbf{8.70} \\
\hline
\hline
\multirow{4}{*}{case57\_ieee} 
& initial configuration & - &  $\gamma(\emptyset) =0.938$ & 0 & - & -\\ 
\cline{2-7}
& 1 &   0.371 &                1.038 &                2 &             10 &                     12.5 \\
& 2 &   0.142 &                1.038 &                2 &              3 &                     3.75 \\
& 3 &  0.085 &                 1.038 &                2 &              1 &                     1.25 \\
\cline{2-7}
& final &  0.598 &   \textbf{1.038} &  \textbf{2} & \textbf{14} & \textbf{17.5} \\
\hline
\hline
\multirow{4}{*}{case73\_ieee}
& initial configuration & - &  $\gamma(\emptyset) = 0.632$ & 0 & - & -\\
\cline{2-7}
& 1 &  0.076 &                0.778 &                0 &              1 &                 0.83 \\
& 2 &   0.099 &                0.772 &                0 &              2 &                  1.67 \\
& 3 &   0.156 &                0.694 &                0 &              4 &                  3.33 \\
\cline{2-7}
& final &  0.242 &   \textbf{0.694} & \textbf{7} & \textbf{6} &  \textbf{5.83} \\
\hline
\hline
\multirow{4}{*}{case118\_ieee} 
& initial configuration & - &  $\gamma(\emptyset) = 1.000$ & 2 & - & - \\
\cline{2-7}
& 1 &   0.278 &                1.011 &                2 &              5 &                  2.69 \\
& 2 &  0.168 &                1.045 &                2 &              4 &                  2.15 \\
& 3 &  0.135 &                1.045 &                2 &              3 &                   1.61 \\
\cline{2-7}
& final &  0.581 &  \textbf{1.045} & \textbf{2} &  \textbf{12} & \textbf{6.45} \\
\hline
\hline
\multirow{4}{*}{case179\_goc} 
& initial configuration & - &  $\gamma(\emptyset) =1.000$ & 4 & - & -\\
\cline{2-7}
& 1 &   0.148 &                1.382 &                5 &              3 &                  1.14 \\
& 2 &   0.154 &                1.382 &                5 &              2 &                 0.76 \\
& 3 &   0.251 &                1.382 &                5 &              6 &                  2.28 \\
\cline{2-7}
& final &  0.553 &  \textbf{1.382} & \textbf{5} & \textbf{11} & \textbf{4.18} \\
\hline
\hline
\multirow{4}{*}{case200\_activ} 
& initial configuration & - &  $\gamma(\emptyset) =0.708$ & 0 & - & -\\
\cline{2-7}
& 1 &   0.221 &                 0.591 &                0 &              5 &                  2.04 \\
& 2 &   0.281 &                 0.591 &                0 &              8 &                  3.26 \\
& 3 &   0.141 &                0.605 &                0 &              3 &                  1.22 \\
\cline{2-7}
& final &  0.643 &  \textbf{0.605} &  \textbf{0} & \textbf{16} & \textbf{6.52} \\
\hline
\hline
\multirow{4}{*}{case300\_ieee} 
& initial configuration & - &  $\gamma(\emptyset) =1.000$ & 11 & - & -\\
\cline{2-7}
& 1 &  0.509 &               1.161 &                5 &             12 &                  2.91 \\
& 2 &  0.223 &               1.161 &                5 &              3 &                 0.73 \\
& 3 &  0.386 &               1.197 &                7 &              8 &                  1.95\\
\cline{2-7}
& final & 1.118 &   \textbf{1.197} & \textbf{7} & \textbf{23} & \textbf{5.59} \\
\hline
\hline
\multirow{4}{*}{case1888\_rte} 
& initial configuration & - &  $\gamma(\emptyset) =1.000$ & 14 & - & -\\
\cline{2-7}
& 1 &     4.567 &              0.869 &                0 &             48 &                  1.89 \\
& 2 &    1.994 &               0.869 &                0 &              9 &                 0.36 \\
& 3 &    2.694 &               0.869 &                0 &             24 &                 0.95 \\
\cline{2-7}
& final &  9.225 &  \textbf{0.869} & \textbf{0} & \textbf{81} & \textbf{3.20} \\
\hline
\hline
\multirow{4}{*}{case2735\_sop} 
& initial configuration & - &  $\gamma(\emptyset) =1.000$ & 1 & - & -\\
\cline{2-7}
& 1 &   12.57 &              0.964 &                0 &            116 &                  3.54 \\
& 2 &   1.317 &              0.964 &                0 &             19 &                  0.58 \\
& 3 &   2.145 &              2.637 &               13 &             33 &                  1.00\\
\cline{2-7}
& final &  16.032 &  \textbf{2.637} & \textbf{13} & \textbf{168} & \textbf{5.12} \\
\hline
\hline
    \end{tabular}
    \caption{Results obtained by the recursive algorithm stopped after $3$ iterations using the FastGreedy method for the candidate bridge-blocks identification.}
    \label{tab:recursive_results}
\end{table}
\FloatBarrier

By comparing Tables~\ref{tab:one-shot_results} and~\ref{tab:recursive_results} it is clear that among the two algorithms, the one-shot algorithm is the one with slightly better performance, as it yields lower network congestion level for almost all the test networks considered. 
On the other hand, the recursive algorithm still yields good results (even when using only the FastGreedy method for the first subproblem), but it is orders of magnitude faster.

\FloatBarrier

\section{Concluding remarks \& future directions}
\label{sec:conclusion}

In this paper, we proposed a novel spectral representation of power systems which captures the topological and physical interactions within a power grid. Particularly, we illustrated that the graph Laplacian is of importance to characterize the power distributions. This spectral perspective further provides a simple, exact and clear representation of the power redistribution after line failures, where we introduced the network block decomposition to precisely characterize the boundary of the region a line failure can impact.

A finer block decomposition of the power grid creates more, smaller management regions whose operations can be isolated from each other in various aspects, bringing benefits such as disruption localizability, optimization parallelization, and more to the control and management of power systems. Therefore, we introduced a two-stage procedure to optimally modify the network topology for a finer block decomposition. Specifically, we first identify the candidate power network blocks by solving a weighted version of the network modularity problem and then identify the line switching action that has the least impact on the surviving network with respect to line congestion. This two-stage procedure is validated with numerical experiments. 

In the rest of this section, we present a collection of examples on how network block decomposition offers powerful analytical guarantees and benefits in different power system applications beyond the setting of this paper, and how we can leverage these properties to improve the current industry practices in these applications.

\paragraph{Real-time failure localization and mitigation.}
Reliability is a crucial part for the development of sustainable modern power systems. Current industry practice relies on simulations based contingency analysis which is usually constrained by computational power~\cite{song2016dynamic}. Since the cascading failures exhibit a complicated and non-local propagation pattern, such analysis tends to omit the structural properties of failures and may fail to control and mitigate the failure in real time. There are topological models adopted from complex network analysis, however, such models are relatively simple thus cannot reveal physical properties~\cite{Hines2010}. 

As introduced in Sections~\ref{sec:spectral} and \ref{sec:localization}, network block decomposition leads to clear and well-defined boundaries that can localize the impact of failures. This suggests a new technique for grid reliability by switching off a small number of transmission lines so that finer-grained network blocks are created. In~\cite{Guo2019}, a real-time distributed control strategy utilizing the localizability from network block decomposition is proposed. Such control strategy is guaranteed to (i) always prevent successive failures from happening; and (ii) localize the impact of initial failures as much as possible. Simulation results show that the controller significantly improves the overall grid reliability. 

\paragraph{Security constrained OPF.}   
Security constrained OPF (SC OPF) is another approach to improve grid reliability that works by adding a set of contingencies and requiring the operating point to be safe under any of such contingency. These additional constraints greatly increase the size of the problem~\cite{capitanescu2011state}. In practice, the SC OPF is solved with contingency filtering techniques \cite{capitanescu2008new} to reduce the problem size and/or iterative methods such as Alternating Direction Method of Multipliers (ADMM)~\cite{chakrabarti2014security} for distributed computations. 

With a finer network block decomposition, both approaches can be significantly accelerated. The block induces a natural way to decompose and decouple the SC OPF, reducing the number of constraints and variables in each region. In addition, the messages necessary to pass across are reduced, leading to a fast convergence for the local optimizations. Therefore, there is huge potential in accelerating the computation of SC OPF when the network can be decomposed into comparable blocks.

\paragraph{Distributed AC OPF.}
Optimal Power Flow (OPF) underlies the efficient and stable operation of power grids, reducing the generation cost and transmission loss while maintaining the system stability in steady state. AC OPF problems are challenging because of their highly non-convex nature stemming from the nonlinear relationship between voltages and the complex powers. To overcome this challenge, semidefinite programming (SDP) relaxation of the AC OPF problem, where the optimization variable is transformed to a matrix $\mathbf{V}$ that needs to be semidefinite positive, i.e. $\mathbf{V}\succeq 0$, has been proposed; see~\cite{MolzahnHiskens2019,lavaei2011zero} for surveys to a considerable literature.
However, the standard SDP solvers do not scale well with the network size, necessitating a distributed solution where multiple agents communicate with each other and solve the relaxed problem together. This has proved difficult because the $\mathbf{V}\succeq 0$ constraint is globally coupled, rendering ADMM to be inapplicable. Nevertheless, it is shown in \cite{dall2013distributed} that if the power grid exhibits a block decomposition and each block has an nonempty interior, then the partial matrix with known entries specified by these blocks is chordal
and thus can be completed to a feasible global solution $\mathbf{V}\succeq 0$ (see. e.g.,~\cite{6815671}). This enables the application of ADMM if the SDP optimization is decomposed in accordance with the network block structure and allows the optimization to be distributed to each block. Therefore, a finer network block decomposition helps reduce the overall operation cost by allowing AC OPF to be solved more efficiently.

\paragraph{Monitoring of price manipulation.}
Aggregators are playing an increasingly important role in the construction of a morden power grid, especially with the higher penetration level of renewable energy resources. The aggregators install, monitor and manage household energy devices such as rooftop solar panels and electric vehicle charging stations, and coordinate with utilities and Idependent System Operators (ISOs) to provide renewable generation and demand response. In addition to the benefits of aggregators, however, aggregators have the potential to manipulate prices through strategic curtailment for more profit~\cite{ruhi2017opportunities}. 
To quantify the potential for aggregators to exercise market power, in~\cite{ruhi2017opportunities} an algorithmic framework is given to approximately solve a bilevel quadratic optimization problem. Their method relies on decomposing the optimization problem into local parts and solve approximately with dynamic programming, thus only works for radial networks. It can be shown that similar bound holds in general networks if the block size of the network is upper bounded. Therefore, it is possible to extend the analysis from radial networks to general networks with proper block decomposition.


\FloatBarrier


\section*{Acknowledgements}
This research has been supported by NWO Rubicon grant 680.50.1529, Resnick Fellowship, Linde Institute Research Award, NSF grants through PFI:AIR-TT award 1602119, EPCN 1619352, CNS 1545096, CCF 1637598, ECCS 1619352, CNS 1518941, CPS 154471, AitF 1637598, ARPA-E grant through award DE-AR0000699 (NODES) and GRID DATA, DTRA through grant HDTRA 1-15-1-0003 and Skoltech through collaboration agreement 1075-MRA.


\appendix
\section{Appendix}

\subsection{Proof of Proposition~\ref{ch:lm; eq:Laplacian-properties}}

\begin{itemize}
    \item[(i)] From its definition~\eqref{eq:Laplaciandef}, it is clear that we can rewrite the Laplacian matrix $L$ as
    \[
        L = \sum_{(i,j) \in E} b_{ij} (\bm{e}_i \bm{e}_i^T + \bm{e}_j \bm{e}_j^T - \bm{e}_i \bm{e}_j^T - \bm{e}_j \bm{e}_i^T).
    \]
    Therefore, for any $\bm{x} \in \R^n$ we have 
    \begin{align*}
    \bm{x}^T L \bm{x} &= \bm{x}^T \Big(\sum_{(i,j) \in E} b_{ij} (\bm{e}_i \bm{e}_i^T + \bm{e}_j \bm{e}_j^T - \bm{e}_i \bm{e}_j^T - \bm{e}_j \bm{e}_i^T) \Big) \bm{x}
    = \sum_{(i,j) \in E} b_{ij} \, \bm{x}^T  (\bm{e}_i \bm{e}_i^T + \bm{e}_j \bm{e}_j^T - \bm{e}_i \bm{e}_j^T - \bm{e}_j \bm{e}_i^T) \bm{x}\\
    &= \sum_{(i,j)\in E} b_{ij} \left( x_i^2 - 2 x_i x_j + x_j^2 \right) =  \sum_{(i,j)\in E} b_{ij} (x_i - x_j)^2 \geq 0
    \end{align*}

\item[(ii)] In (i) we rewrote $\bm{x}^T L \bm{x}$ as the quadratic form $\sum_{(i,j)\in E} b_{ij} (x_i - x_j)^2$. This, in combination with the fact that $b_{ij} >0$ for every line $(i,j) \in E$ suffices to conclude that $L$ is positive semidefinite.
	
\item[(iii)] The row and column property immediately follows from the way the matrix $L$ is defined, cf.~\eqref{eq:Laplaciandef}. Since the kernel of $L$ contains at least the nontrivial vector $\bm{1}$, the matrix is singular.

\item[(iv)] We claim that a vector $\bm{v}$ is in the null space of $L$ if and only if $v_i = v_j$ for all nodes $i, j$ in the same connected component. For any such pair, there exists a path $\ell_1,\dots,\ell_r$ connecting them and for all the nodes traversed by this path (including $i$ and $j$) the corresponding entries of the vector $v$ must be equal, in view of the fact that $\bm{v}^T L \bm{v} = 0$ and that $b_{\ell_1},\dots,b_{\ell_r}>0$. The reverse implication is trivial. This claim implies that an orthonormal basis of the null space consists of $k$ orthogonal unit vectors $\bm{v}_j$, $j = 1, \dots, k$, whose entries are
\[
    (\bm{v}_j)_i := \frac{\bm{1}_{\calI_j}(i)}{\sqrt{|\calI_j|}}, \qquad i=1,\dots, n,
\]
where $\calI_1,\dots,\calI_k$ are the connected components of the graph $G$. Hence, the null space of $L$ has a dimension of $k$ and, therefore, $\mathrm{rank}(L) = n - \mathrm{dim}(\mathrm{ker}(L)) = n - k$.

Consider now the singular value decomposition of the symmetric real matrix $L$, that is $L = U\Sigma U^T$. The columns of $U$ are the orthonormal eigenvectors of $L$, namely $\frac{1}{\sqrt{|\calI_1|}} \bmo_{\calI_1},\dots,\frac{1}{\sqrt{|\calI_k|}} \bmo_{\calI_k}, \bm{v}_{k+1},\dots, \bm{v}_n$. Then the pseudo-inverse $L^\dag$ is equal to $L^\dag = U \Sigma^\dag U^T$, where $\Sigma^\dag$ is the diagonal $n\times n$ matrix of rank $n-k$ obtained from $\Sigma$ by replacing each the nonzero eigenvalues $\lambda_i$ of $L$ by its reciprocal $1/\lambda_i$.
The first identity 
\begin{equation}
\label{eq:Ldag}
    L^\dag = \sum_{j=k+1}^{n} \frac{1}{\l_j} \, \bm{v}_j \bm{v}_j^T
\end{equation}
readily follows from $L^\dag = U \Sigma^\dag U^T$ in view of the orthogonality of the eigenvectors and the structure of the null space described above. For the second identity, it is enough to show that 
\[
    \left( L + \sum_{j=1}^k \frac{1}{|\calI_j|} \bmo_{\calI_j} \bmo_{\calI_j}^T \right)^{-1}  = L^\dag + \sum_{j=1}^k \frac{1}{|\calI_j|} \bmo_{\calI_j} \bmo_{\calI_j}^T = \sum_{j=k+1}^n \frac{1}{\lambda_j} \bm{v}_j \bm{v}_j^T + \sum_{j=1}^k \frac{1}{|\calI_j|} \bmo_{\calI_j} \bmo_{\calI_j}^T.    
\]
We prove that the RHS of the latter equation is the inverse matrix by direct calculation
\[
    \left( L + \sum_{j=1}^k \frac{1}{|\calI_j|} \bmo_{\calI_j} \bmo_{\calI_j}^T \right)  \left(\sum_{j=k+1}^n \frac{1}{\lambda_j} \bm{v}_j \bm{v}_j^T + \sum_{j=1}^k \frac{1}{|\calI_j|} \bmo_{\calI_j} \bmo_{\calI_j}^T \right) = U U^T = I,
\]
and the conclusion follows by uniqueness of the matrix inverse. In the special case in which $G$ is connected, then $k=1$, $V = \calI_1$, and the above expression simplifies in view of the fact that $\frac{1}{|\calI_1|} \bmo_{\calI_1} \bmo_{\calI_1}^T = \frac{1}{n} \bmo \bmo^T$. 

\item[(v)] All the listed properties of $L^\dag$ can be easily deduced from (ii) and identity~\eqref{eq:Ldag}. \qedhere
\end{itemize}



\subsection{Proof of Proposition~\ref{ch:lm; prop:PTDF.2}}

(i) We can directly inspect that the $(\ell,\hat \ell)$-th element of the PTDF matrix $D$, i.e. $D_{\ell \ell}$, coincides with the $(\ell,\hat \ell)$-th element of the matrix $B C^T  L^\dag  C$. More specifically, using the first equation of Proposition~\ref{ch:lm; prop:PTDF.1} and choosing $\hat \ell=(s,t) \in E$, we have
\[
    (B C^T  L^\dag  C)_{\ell \hat\ell}
    = \mathbf{e}_\ell^T B C^T  L^\dag  C \mathbf{e}_{\hat\ell} 
    = b_{\ell} e_\ell^T C^T  L^\dag  C \mathbf{e}_{\hat\ell} 
    = b_{\ell} (\mathbf{e}_i - \mathbf{e}_j)^T L^\dag (\mathbf{e}_s - \mathbf{e}_t)
    = b_{\ell} \left( L^\dag_{i\hi} +  L^\dag_{j\hj} -  L^\dag_{i\hj} -  L^\dag_{j\hi} \right) = D_{\ell \hat\ell}.
\]

(ii) Let us consider the second expression given in Proposition~\ref{ch:lm; prop:PTDF.1} for $D_{\ell,\ell}$ in the special case where $\hat\ell = (s,t) = (i,j) = \ell$. We obtain
\begin{equation} \label{eq:Dll}
    D_{\ell, \ell} = b_{\ell} \ \frac{\sum_{\calE\in T(\{i,i\}, \{j,j\})}\, \beta(\calE) \ - \ \sum_{\calE\in T(\{i,j\}, \{j,i\})}\, \beta(\calE)}{\sum_{\calE\in T}\, \beta(\calE)} = b_{\ell} \ \frac{\sum_{\calE\in T(\{i\},\{j\})}\, \beta(\calE)}{\sum_{\calE\in T}\, \beta(\calE)},
\end{equation}
where the last identity follows from the fact that $T(\{i,j\}, \{j,i\})=\emptyset$. The collection $T=T_E$ of all spanning trees of the graph $G$ can be partitioned into two subsets, depending on whether the edge $\ell=(i,j)$ belongs to it or not. All the spanning trees that do contain $\ell$ are in one-to-one correspondence with the spanning forests in $T(\{i\},\{j\})$. Indeed, each such spanning tree, once the edge $\ell$ is removed, yields a spanning forest consisting of two trees, one containing node $i$ and the other one node $j$. The other subset consisting of all the spanning trees that do not contain $\ell$ is $T \!\setminus\! \{\ell\}$ by definition. In view of these considerations, we can rewrite the sum of all spanning tree weights as
\[
    \sum_{\calE\in T}\, \beta(\calE) = b_\ell \sum_{\calE\in T(\{i\},\{j\})}\, \beta(\calE) \ + \  \sum_{\calE\in T\!\setminus\! \{\ell\}}\, \beta(\calE).
\]
Using this identity to rewrite the denominator appearing in the RHS of~\eqref{eq:Dll} we obtain
\[
\frac{b_{\ell} \sum_{\calE\in T(\{i\},\{j\})}\, \beta(\calE)} {\sum_{\calE\in T}\, \beta(\calE)} 
=  \frac{b_{\ell} \sum_{\calE\in T(\{i\},\{j\})}\, \beta(\calE)} {b_\ell \sum_{\calE\in T(\{i\},\{j\})}\, \beta(\calE) \ + \  \sum_{\calE\in T\!\setminus\! \{\ell\}}\, \beta(\calE) } 
= 1 \ - \  \frac{ \sum_{\calE\in T\!\setminus\! \{\ell\} }\, \beta(\calE)} {\sum_{\calE\in T}\, \beta(\calE)}.
\]
Since the collection $T\!\setminus\! \{\ell\}$ could be empty but it is always strictly included in $T$, we immediately deduce that $0 < D_{\ell\ell} \leq 1$.

(iii) The two identities readily follow from Proposition~\ref{ch:lm; prop:PTDF.1} in the special case $\ell=(s,t)$ combined with the definition of effective resistance given in~\eqref{eq:effr}. \qedhere

\subsection{Proof of Theorem~\ref{thm:zeroPTDF}}
We prove the result by contradiction. If $D_{\ell \hat\ell} \neq 0$, then at least one of the two sets $T(\{i,s\}, \{j,t\})$ and $T(\{i,t\}, \{j,s\})$ of spanning forests appearing in the numerator of the RHS of~\eqref{eq:Dlst} is nonempty. Suppose $T(\{i,s\}, \{j,t\})$ is nonempty and take any spanning forest $F \in T(\{i,s\}, \{j,t\})$. The tree in the forest $F$ that contains buses $i$ and $s$ defines a path from $i$ to $s$, and the other tree in $F$ that contains $j$ and $t$ defines a path from $j$ to $t$. Clearly these two paths do not share any vertices. If we add the lines $\ell = (i, j)$ and $\hat \ell = (s, t)$ to these two vertex-disjoint paths we obtain a simple cycle that contains both $\ell$ and $\hat\ell$. \qedhere

\end{document}